%%%%%%%%%%%%%%%%%%%%%%%%%%%%%%%%%%%%%%%%%%%%%%%%%%%%%%%%%%%%%%%

%%%%%%%%%%%%%%%%%%%%%%%%%%%%%%%%%%%%%%%%%%%%%%%%%%%%%%%%%%%%%%%

%%%%%%%%%%%%%%%%%%%%%%%%%%%%%%%%%%%%%%%%%%%%%%%%%%%%%%%%%%%%%%%

\documentclass[11pt]{amsart}
\usepackage{amssymb}
\usepackage{amsmath}
\usepackage{amsfonts}
\usepackage{graphicx}

\usepackage{hyperref}
    \usepackage{aeguill}
    \usepackage{type1cm}

\theoremstyle{plain}

\newtheorem{claim}{Claim}

\newtheorem{corollary}{Corollary}

\newtheorem{definition}{Definition}
\newtheorem{example}{Example}

\newtheorem{lemma}{Lemma}

\newtheorem{proposition}{Proposition}
\newtheorem{remark}{Remark}

\newtheorem{theorem}{Theorem}
\newcommand{\N}{\mathbb{N}}

\newcommand{\R}{\mathbb{R}}

\numberwithin{equation}{section}

%%%%%%%%%%%%%%%%%%%%%%%%%%%%%%

\begin{document}

\title[Global and fine approximation of convex functions]{Global and fine approximation of convex functions}

\author{Daniel Azagra}

\address{ICMAT (CSIC-UAM-UC3-UCM), Departamento de An{\'a}lisis Matem{\'a}tico,
Facultad Ciencias Matem{\'a}ticas, Universidad Complutense, 28040, Madrid, Spain}
\email{daniel\_azagra@mat.ucm.es}

\date{January 27, 2012}

\dedicatory{Dedicated to the memory of Robb Fry}

\maketitle

\begin{abstract}
Let $U\subseteq\R^d$ be open and convex. We prove that every
(not necessarily Lipschitz or strongly) convex function $f:U\to\R$ can be approximated by real
analytic convex functions, uniformly on all of $U$.
We also show that $C^0$-fine approximation of convex functions by smooth
(or real analytic) convex functions on $\R^d$ is possible in general
if and only if $d=1$. Nevertheless, for $d\geq 2$ we give a
characterization of the class of convex functions on $\R^d$ which
can be approximated by real analytic (or just smoother) convex
functions in the $C^0$-fine topology. It turns out that the
possibility of performing this kind of approximation is not
determined by the degree of local convexity or smoothness of the
given function, but by its global geometrical behaviour. We
also show that every $C^{1}$ convex and proper function on
$U$ can be approximated by
$C^{\infty}$ convex functions in the $C^{1}$-fine topology, and we provide
some applications of these results, concerning prescription of (sub-)differential
boundary data to convex real analytic functions, and smooth surgery
of convex bodies.
\end{abstract}

\section{Introduction and main results}

Two important classes of functions in analysis and in geometry are those of
Lipschitz functions and convex functions $f:U\subseteq \R^d\to\R$.
Although these functions are almost everywhere differentiable (or
even almost everywhere twice differentiable in the convex case), it is sometimes useful
to approximate them by smooth functions which are
Lipschitz or convex as well.

In the case of a Lipschitz function $f:U\subseteq\R^d\to\R$, this can easily
be done as follows: by considering the function
$x\mapsto \inf_{y\in U}\{f(y)+L|x-y|\}$ (where
$L=\textrm{Lip}(f)$, the Lipschitz constant of $f$), which is a
Lipschitz extension of $f$ to all of $\R^d$ having the same
Lipschitz constant, one can assume $U=\R^d$. Then, by setting
$f_{\varepsilon}=f*H_{\varepsilon}$, where
$H_{\varepsilon}(x)=\frac{1}{(4\pi
    \varepsilon)^{d/2}}\exp(-|x|^2/4\varepsilon)$ is the heat kernel,
    one obtains real analytic
Lipschitz functions (with the same Lipschitz constants as $f$)
which converge to $f$ uniformly on all of $\R^d$ as
$\varepsilon\searrow 0$. If one replaces $H_\varepsilon$ with any
approximate identity $\{\delta_\varepsilon\}_{\varepsilon>0}$ of
class $C^k$, one obtains $C^k$ Lipschitz approximations. Moreover,
if $\delta_{\varepsilon}\geq 0$ and $f$ is convex, then
the functions $f_\varepsilon$ are convex as well.

However, if $f:\R^d\to\R$ is convex but not globally Lipschitz, the
convolutions $f*H_{\varepsilon}$ may not be well defined or, even
when they are well defined, they do not converge to $f$ uniformly
on $\R^d$. On the other hand, the convolutions
$f*\delta_\varepsilon$ (where
$\delta_{\varepsilon}=\varepsilon^{-d}\delta(x/\varepsilon)$,
$\delta\geq 0$ being a $C^\infty$ function with bounded support and
$\int_{\R^d}\delta=1$) are always well defined, but they only
provide uniform approximation of $f$ on {\em compact} sets. Now,
partitions of unity cannot be used to glue
these local convex approximations into a global approximation,
because they do not preserve convexity. To see why this is so, let
us consider the simple case of a $C^2$ convex function
$f:\R\to\R$, to be approximated by $C^\infty$ convex functions.
Take two bounded intervals $I_1\subset I_2$, and $C^\infty$
functions $\theta_1, \theta_2:\R\to [0,1]$ such that
$\theta_1+\theta_2=1$ on $\R$, $\theta_1 =1$ on $I_1$, and
$\theta_2=1$ on $\R\setminus I_2$. Given $\varepsilon_j>0$ one may
find $C^\infty$ convex functions $g_j$ such that $\max\{|f-g_j|,
|f'-g'_j|, |f''-g''_j|\}\leq\varepsilon_j$ on $I_j$. If
$g=\theta_1 g_1+\theta_2 g_2$ one has
    $$
g''=g''_1\theta_1+g''_2\theta_2+ 2(g'_1-g'_2)\theta'_1
+(g_1-g_2)\theta''_1.
    $$
If $f''>0$ on $I_2$ then by choosing $\varepsilon_i$ small enough
one can control this sum and get $g''\geq 0$, but if the $g''_i=0$
vanish somewhere there is no way to do this (even if we managed to
have $g_2\geq g_1$ and $g'_2\geq g'_1$, as $\theta''_1$ must
change signs).

In \cite{Greene5}, \cite{Greene3}, \cite{Greene4} Greene and Wu studied the
question of approximating a convex function defined on a
(finite-dimensional) Riemannian manifold $M$\footnote{In
Riemannian geometry convex functions have been used, for instance,
in the investigation of the structure of noncompact manifolds of
positive curvature by Cheeger, Greene, Gromoll, Meyer, Siohama, Wu
and others, see \cite{GromollMeyer}, \cite{CheegerGromoll}, \cite{Greene1},
\cite{Greene2}, \cite{Greene3}, \cite{Greene4}. The existence of global convex
functions on a Riemannian manifold has strong geometrical and
topological implications. For instance \cite{Greene1}, every
two-dimensional manifold which admits a global convex function
that is locally nonconstant must be diffeomorphic to the plane,
the cylinder, or the open M{\"o}bius strip.}, and they showed that if
$f:M\to\R$ is strongly convex (in the sense of the following
definition), then for every $\varepsilon>0$ one can find a
$C^\infty$ strongly convex function $g$ such that
$|f-g|\leq\varepsilon$ on all of $M$.

\begin{definition}
A $C^{2}$ function $\varphi:M\to\mathbb{R}$
is called strongly convex if its second derivative along any nonconstant
geodesic is strictly positive everywhere on the geodesic. A (not
necessarily smooth) function $f:M\to\mathbb{R}$ is said to be
strongly convex provided that for every $p\in M$ there exists an open neighbourhood $V$ of $p$ and a
strongly convex function $\varphi\in C^{2}(V)$ such that
$f-\varphi$ is convex on $V$.\footnote{We
warn the reader that, in Greene and Wu's papers, what we have just
defined as strong convexity is called strict convexity. We have
changed their terminology since we will be mainly concerned with
the case $M=\R^d$, where one traditionally defines a strictly
convex function as a function $f$ satisfying $f\left(
(1-t)x+ty\right) < (1-t)f(x)+tf(y)$ if $0<t<1$.}
\end{definition}

This solves the problem when the given function $f$ is strongly
convex. However, as Greene and Wu pointed out, their method cannot
be used when $f$ is not strongly convex. This is inconvenient
because strong convexity is a very strong condition: for instance,
the function $f(x)=x^4$ is strictly convex, but not strongly
convex on any neighbourhood of $0$. However, as shown by Smith in
\cite{Smith}, this is a necessary condition in the general
Riemannian setting: for each $k=0,1,..., \infty$, there exists a
flat Riemannian manifold $M$ such that on $M$ there is a $C^k$
convex function which cannot be globally approximated by a
$C^{k+1}$ convex function (here $C^{\infty+1}$ means real
analytic). There are no results characterizing the manifolds on
which global approximation of convex functions by smooth convex
functions is possible. Even in the most basic case $M=\R^d$,
we have been unable to find any reference dealing with the problem of finding smooth global
approximations of (not necessarily Lipschitz or strongly) convex functions.

One of the main purposes of this paper is proving the following.

\begin{theorem}\label{uniform approximation of convex by real analytic convex}
Let $U\subseteq\R^d$ be open and convex. For every convex
function $f: U\to\R$ and every $\varepsilon>0$ there exists a
real-analytic convex function $g:U\to\R$ such that $f-\varepsilon
\leq g\leq f$.
\end{theorem}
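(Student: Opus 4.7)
\emph{Overview and local approximation.} My plan is to produce $g$ by smoothly pasting together real-analytic convex approximations of $f$ defined on an exhaustion of $U$, using a convexity-preserving smooth-max. First, exhaust $U$ by open convex sets $V_1 \Subset V_2 \Subset \cdots$ with $\bigcup_n V_n = U$. On each $V_n$ the function $f$ is Lipschitz with some constant $L_n$, so I can take a Lipschitz convex extension $\tilde f_n : \R^d \to \R$; one such choice is
\[
\tilde f_n(x) = \sup\{L(x) : L \text{ affine},\ L \leq f \text{ on } V_n,\ |\nabla L| \leq L_n\}.
\]
The convolution $h_n := \tilde f_n * H_{\varepsilon_n}$ with the heat kernel is convex (convolution of a convex function against a non-negative kernel) and real-analytic on $\R^d$ (the heat kernel extends to an entire function with Gaussian decay along horizontal strips, allowing differentiation under the integral against the linearly bounded $\tilde f_n$). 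Standard estimates give $|h_n - f| = O(L_n\sqrt{\varepsilon_n})$ on $V_n$; since moreover $\tilde f_n \leq f$ on $U$ by convexity (the slopes of $f$ outside $V_n$ dominate those of any Lipschitz extension from $V_n$), we may, after subtracting a small constant, assume $h_n \leq f$ on $U$ and $h_n \geq f - \delta_n$ on $V_n$, with $\delta_n \downarrow 0$ as fast as we please.

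\emph{Gluing via log-sum-exp.} The natural one-shot candidate for the global approximation is
\[
g(x) := \frac{1}{\beta} \log \sum_{n=1}^{\infty} c_n\, e^{\beta h_n(x)},\qquad c_n > 0,\ \ \sum_n c_n \leq 1,\ \ \beta > 0,
\]
which is convex (each $e^{\beta h_n}$ is log-convex, and a locally uniformly convergent sum of log-convex functions is log-convex) and real-analytic provided the series converges locally uniformly. The upper bound $g \leq f$ is immediate from $h_n \leq f$ on $U$ and $\sum c_n \leq 1$. For $x \in V_n$, keeping only the $n$-th term in the sum yields $g(x) \geq h_n(x) - \frac{1}{\beta}|\log c_n| \geq f(x) - \delta_n - \frac{1}{\beta}|\log c_n|$, so we aim to arrange $\delta_n + |\log c_n|/\beta \leq \varepsilon$.

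\emph{Main obstacle and resolution.} The essential difficulty is that summability $\sum c_n \leq 1$ forces $|\log c_n| \to \infty$, so no single $\beta$ can satisfy $|\log c_n|/\beta \leq \varepsilon$ for all $n$; a one-shot log-sum-exp cannot cover an unbounded $U$. My fix is to use an iterated pairwise smooth max with parameters $\beta_n \uparrow \infty$,
\[
g_0 \equiv -\infty,\qquad g_n := \frac{1}{\beta_n} \log\Bigl(\tfrac12 e^{\beta_n h_n} + \tfrac12 e^{\beta_n g_{n-1}}\Bigr),
\]
chosen so that $\sum_n 1/\beta_n$ is small relative to $\varepsilon$. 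Each $g_n$ is real-analytic, convex, and $\leq f$, and satisfies $g_n \geq \max(h_1,\ldots,h_n) - (\log 2)\sum_{k\leq n} 1/\beta_k$; the uniform-on-compacts limit $g := \lim_n g_n$ then lies in $[f - \varepsilon, f]$. The technical core—and the hardest part of the argument—is verifying that $g$ is still real-analytic. This requires showing that the $g_n$ admit holomorphic extensions to a common complex neighbourhood of each compact $K \subset U$ and converge uniformly there, which in turn demands a careful balance between the widths of the complex strips to which the $h_n$ extend (controlled by $\varepsilon_n$ and $L_n$) and the rate of convergence $g_n - g_{n-1} = O(1/\beta_n)$. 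The simultaneous choice of $\varepsilon_n$, $\delta_n$, and $\beta_n$ along the exhaustion is the main technical burden of the proof.
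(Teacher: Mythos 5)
The decisive gap is the step you yourself flag as ``the hardest part'': real analyticity of the limit $g=\lim_n g_n$. Everything before it is comparatively soft (since $g_n\geq g_{n-1}-(\log 2)/\beta_n$ and $g_n\leq f$, the sequence $g_n+\log 2\sum_{k\leq n}1/\beta_k$ is nondecreasing and bounded, so the limit exists, is convex, and satisfies the two-sided bound), but the analyticity claim is not proved, and the construction works against the plan you sketch. The entire extension $h_n=\tilde f_n * H_{\varepsilon_n}$ is controlled on a complex strip of width $\rho$ only up to factors of size $e^{\rho^{2}/4\varepsilon_n}$, and $\varepsilon_n$ must shrink like $(\delta_n/L_n)^2$ with $L_n\to\infty$ when $f$ is not globally Lipschitz, so the strips on which $h_n$ (and in particular $\mathrm{Im}\,h_n$) is tame have widths tending to $0$; moreover the branch of $\log$ in $g_n=\beta_n^{-1}\log\bigl(\tfrac12 e^{\beta_n h_n}+\tfrac12 e^{\beta_n g_{n-1}}\bigr)$ is only single-valued and zero-free where $\beta_n|\mathrm{Im}\,h_n|$ and $\beta_n|\mathrm{Im}\,g_{n-1}|$ stay below $\pi/2$, which with $\beta_n\uparrow\infty$ forces neighbourhood widths $O(1/\beta_n)\to 0$. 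Since each later $g_n$ recomposes with such a logarithm globally (not merely outside $V_{n-1}$), there is no fixed complex neighbourhood of a given compact on which the whole tail of the sequence is holomorphic, so the ``careful balance'' has no room to operate; shifting $h_n$ by constants large enough to tame the off-axis blow-up would destroy the real approximation on $V_n\setminus V_{n-1}$. This is exactly the difficulty the paper's proof is built to avoid: its smooth maximum equals the exact maximum once the arguments differ by $\varepsilon$, and the successive local approximants are shifted progressively downward so that on each previously covered set the glued function stops changing after finitely many steps; the gluing therefore only needs to produce a $C^\infty$ \emph{strongly} convex approximation, and real analyticity comes at the very end from one application of Whitney's $C^2$-fine approximation theorem, the positive second derivative absorbing the perturbation. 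Since not every convex function admits strongly convex approximations (e.g. $f=c\circ P+\ell$), the paper also needs the supporting-corner/dimension-reduction dichotomy before that last step; your scheme has no substitute for either ingredient.

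There is also a genuine (though fixable) error in the local step: your $\tilde f_n$ need not satisfy $\tilde f_n\leq f$ on $U$. Take $f(x)=-x$ on $\R$ and $V_n=(0,1)$, so $L_n=1$; the affine function $L(x)=x-2$ satisfies $L\leq f$ on $V_n$ and $|\nabla L|\leq L_n$, yet $L(2)=0>-2=f(2)$, so $\tilde f_n(2)>f(2)$. The parenthetical justification (``slopes of $f$ outside $V_n$ dominate those of any Lipschitz extension'') is not what convexity gives. The remedy is the one the paper uses: set $\tilde f_n(x)=\inf_{y\in U}\{f(y)+L_n|x-y|\}$ (equivalently, the supremum of affine minorants of $f$ \emph{on all of} $U$ with slope at most $L_n$); this is convex and $L_n$-Lipschitz on $\R^d$, lies below $f$ on $U$, and equals $f$ on $V_n$ because subgradients of $f$ at points of $V_n$ have norm at most $L_n$ and are global minorants. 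Finally, note that your assertion $g_n-g_{n-1}=O(1/\beta_n)$ on compacta, which your analyticity plan leans on, additionally requires the paper-style bookkeeping of pushing each new approximant strictly below the previously glued function on the already-covered region; with only $h_n\leq f$ and $h_n\geq f-\delta_n$ on $V_n$ you cannot guarantee $h_n\leq g_{n-1}$ there.
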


This result is optimal in several ways: as we will see, it is not possible to obtain $C^{0}$-fine approximation of convex functions by $C^1$ convex functions on $\R^d$ when $d\geq 2$ (and even in the case $d=1$ this kind of approximation is not possible from below).

In showing this theorem we will develop a gluing technique for
convex functions which will prove to be useful also in the setting
of Riemannian manifolds or Banach spaces.

\begin{definition}\label{defn approx from below}
Let $X$ be $\R^d$, or a complete Riemannian manifold (not necessarily
finite-dimensional), or a Banach space, and let $U\subseteq X$ be open and convex.
We will say that a continuous convex function $f:U\to\R$ can
be approximated from below by $C^k$ convex functions, uniformly on
bounded subsets of $U$, provided that for every bounded set
$B$ with $\overline{B}\subset U$ and $\textrm{dist}(B, \partial U)>0$, and for every $\varepsilon>0$ there
exists a $C^k$ convex function $g:U\to\R$ such that
\begin{enumerate}
\item $g\leq f$ on $U$, and
\item $f-\varepsilon\leq g$ on $B$.
\end{enumerate}
\end{definition}
(In the case when $U=X$ is unbounded we will use the convention that $\textrm{dist}(B, \partial U)=\infty$ for every bounded set $B\subset X$.)

\begin{theorem}[Gluing convex approximations]\label{main theorem}
Let $X$ be $\R^d$, or a complete Riemannian manifold (not necessarily
finite-dimensional), or a Banach space, and let $U\subseteq X$ be open and convex.
Assume that $U=\bigcup_{n=1}^{\infty}B_n$, where the $B_n$ are open bounded
convex sets such that $\textrm{dist}(B_n, \partial U)>0$ and $\overline{B_n}\subset B_{n+1}$
for each $n$. Assume also that $U$ has the property that every continuous, convex function $f:U\to\R$ can be
approximated from below by $C^k$ convex (resp. strongly convex) functions ($k\in\N\cup\{\infty\}$),
uniformly on bounded subsets of $U$.

Then every continuous convex function $f:U\to\R$ can be
approximated from below by $C^k$ convex (resp. strongly convex) functions, uniformly on
$U$.
\end{theorem}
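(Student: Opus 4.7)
The plan is to build a sequence $(h_n)$ of $C^k$ convex (respectively strongly convex) functions on $U$ satisfying $h_n\leq f$ globally, $h_n\geq f-\varepsilon$ on $B_n$, and the crucial stabilization $h_{n+1}\equiv h_n$ on $B_n$. Then $h(x):=h_n(x)$ (for any $n$ with $x\in B_n$) is well defined on $U$, coincides with the $C^k$ function $h_n$ on each $B_n$ and hence is itself $C^k$ on $U$, and satisfies $f-\varepsilon\leq h\leq f$ globally, as required.

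The iteration is driven by a smooth convexity-preserving maximum $M_\eta\colon\R^2\to\R$ defined by $M_\eta(u,v):=v+\phi_\eta(u-v)$, where $\phi_\eta\in C^\infty(\R)$ is convex, non-decreasing, and satisfies $\phi_\eta(t)=0$ for $t\leq 0$ and $\phi_\eta(t)=t$ for $t\geq\eta$. A direct computation shows $\partial_u M_\eta=\phi'_\eta(u-v)\in[0,1]$, $\partial_v M_\eta=1-\phi'_\eta(u-v)\in[0,1]$, and that its Hessian at $(u,v)$ equals $\phi''_\eta(u-v)$ times the rank-one positive-semidefinite matrix $(1,-1)^{\top}(1,-1)$. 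Hence $M_\eta$ is jointly convex and non-decreasing in each variable, so $M_\eta(a(\cdot),b(\cdot))$ preserves convexity of $a$ and $b$; strong convexity is preserved as well, since the Hessian of $M_\eta(a,b)$ at any point is the positive combination $\phi'_\eta\, D^2 a+(1-\phi'_\eta)\,D^2 b$ of two positive-definite matrices plus a rank-one positive-semidefinite term. Moreover $\max(u,v)\leq M_\eta(u,v)\leq\max(u,v)+\eta$, and crucially $M_\eta(u,v)=u$ as soon as $u-v\geq\eta$.

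Fix summable positive sequences $(\varepsilon_n),(\theta_n)$ with $\sum_j\theta_j<\varepsilon/4$, and an increasing sequence $(c_n)\subset(0,\varepsilon/2)$ with $c_{n+1}-c_n>\theta_{n+1}+\varepsilon_n$. Applying the hypothesis to the convex function $f-\varepsilon/2-c_n$ on $B_n$ yields a $C^k$ convex (resp. strongly convex) $g_n\colon U\to\R$ with $g_n\leq f-\varepsilon/2-c_n$ on $U$ and $g_n\geq f-\varepsilon/2-c_n-\varepsilon_n$ on $B_n$. Set $h_1:=g_1$ and $h_{n+1}:=M_{\theta_{n+1}}(h_n,g_{n+1})$. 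On $B_n$, the bound $h_n\geq g_n$ combined with the bounds for $g_n$ and $g_{n+1}$ gives
\[
h_n-g_{n+1}\geq (f-\varepsilon/2-c_n-\varepsilon_n)-(f-\varepsilon/2-c_{n+1})=c_{n+1}-c_n-\varepsilon_n>\theta_{n+1},
\]
so the saturation property forces $h_{n+1}\equiv h_n$ on $B_n$. Telescoping $M_\eta\leq\max+\eta$ yields $h_n\leq f-\varepsilon/2-c_1+\sum_{j=2}^{n}\theta_j<f$ on $U$, while on $B_n$ one has $h_n\geq g_n\geq f-\varepsilon$ by the choice of parameters.

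The main technical obstacle is the construction of $M_\eta$: convolution-smoothing of $\max$ would preserve convexity but never saturate, whereas pointwise capping of $\max$ would saturate but destroy joint convexity; the specific form $v+\phi_\eta(u-v)$ threads the needle because its Hessian is automatically a non-negative scalar multiple of the fixed rank-one positive-semidefinite matrix $(1,-1)^{\top}(1,-1)$. A secondary delicate point is the bookkeeping of the shifts $(c_n)$: they must grow fast enough between consecutive steps to sustain the saturation inequality $h_n-g_{n+1}\geq\theta_{n+1}$, yet remain bounded above by $\varepsilon/2$ so the total approximation error stays within $\varepsilon$.
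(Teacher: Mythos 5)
Your overall strategy (shift the target function down, approximate on $B_n$, and glue successive approximations with a smooth, convexity-preserving maximum that saturates so that the sequence stabilizes on each $B_n$) is exactly the paper's, and your bookkeeping with the shifts $c_n$, the saturation inequality $h_n-g_{n+1}>\theta_{n+1}$ on $B_n$, and the telescoped upper bound is sound. But the key gadget as you specify it does not exist: there is no convex $C^\infty$ (not even $C^1$) function $\phi_\eta$ with $\phi_\eta(t)=0$ for $t\leq 0$ and $\phi_\eta(t)=t$ for $t\geq\eta$. Indeed, convexity applied to the chord between $(0,0)$ and $(\eta,\eta)$ forces $\phi_\eta(t)\leq t$ on $[0,\eta]$, while the slope inequality through the points $t<\eta<2\eta$ (where $\phi_\eta$ has slope $1$) forces $\phi_\eta(t)\geq t$ there; hence $\phi_\eta(t)=t^{+}$ near $0$, which has a corner at $0$. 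Equivalently, the list of properties you attribute to $M_\eta$ is internally inconsistent: a smooth convex $M$ with $M(u,v)=v$ for $u\leq v$ and $M(u,v)=u$ for $u-v\geq\eta$ cannot exist (restrict to $v=0$), and in particular your claimed bound $\max\leq M_\eta$ is incompatible with $\phi_\eta$ vanishing on all of $(-\infty,0]$.

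The repair is small and none of your later steps use the offending one-sided condition: let the transition zone be two-sided, i.e.\ take $\phi_\eta$ smooth, convex, nondecreasing with $\phi_\eta(t)=0$ for $t\leq-\eta$, $\phi_\eta(t)=t$ for $t\geq\eta$, and $t^{+}\leq\phi_\eta(t)\leq t^{+}+\eta$. This is precisely the paper's smooth maximum in disguise: the paper sets $M_\varepsilon(x,y)=\frac{x+y+\theta(x-y)}{2}$ with $\theta$ a smooth convex symmetric function equal to $|t|$ for $|t|\geq\varepsilon$, which is your $y+\phi(x-y)$ with $\phi(t)=\frac{t+\theta(t)}{2}$. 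With this corrected $M_\eta$ you still have $\max\leq M_\eta\leq\max+\eta$, saturation $M_\eta(u,v)=u$ when $u-v\geq\eta$, monotonicity, preservation of convexity and (for $C^2$ data) strong convexity, and then your induction ($h_{n+1}=h_n$ on $B_n$, $f-\varepsilon\leq h\leq f$) goes through verbatim; the resulting argument is essentially the proof given in the paper, which uses the explicit shifts $\varepsilon/2^{n}$ and tolerances $\varepsilon/10^{n}$ where you use the general sequences $(c_n),(\theta_n),(\varepsilon_n)$.
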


From this result (and from its proof and the known results on approximation on bounded sets) we will easily deduce the
following corollaries.

\begin{corollary}\label{uniform approximation of convex functions by smooth convex functions in Rn}
Let $U\subseteq\R^d$ be open and convex. For every convex function
$f: U\to\R$ and every $\varepsilon>0$ there exists a $C^\infty$
convex function $g:U\to\R$ such that $f-\varepsilon \leq g\leq f$.
Moreover $g$ can be taken so as to preserve local Lipschitz
constants of $f$ (meaning $\textrm{Lip}(g_{|_B})\leq
\textrm{Lip}(f_{|_{(1+\varepsilon)B}})$ for every ball $B\subset
U$). And if $f$ is strictly (or strongly) convex, so can $g$ be
chosen.
\end{corollary}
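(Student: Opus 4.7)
The plan is to apply the Gluing Theorem (Theorem~\ref{main theorem}) with $k=\infty$. Two things must be checked: that $U$ admits an exhaustion by bounded open convex sets $B_n$ with $\overline{B_n}\subset B_{n+1}$ and $\mathrm{dist}(B_n,\partial U)>0$, and that every continuous convex $f:U\to\R$ can be approximated from below by $C^\infty$ (resp.\ strongly) convex functions, uniformly on bounded subsets of $U$. The exhaustion is supplied by $B_n=\{x\in U:|x|<n\text{ and }\mathrm{dist}(x,\partial U)>1/n\}$, whose convexity follows because $\mathrm{dist}(\cdot,\partial U)$ is concave on $U$ (it equals the infimum, over the supporting hyperplanes of $U^c$, of the corresponding affine signed distances), so its superlevel sets are convex.

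The local approximation is the substantive step. Given such a bounded $B$ with $\mathrm{dist}(B,\partial U)>0$ and $\varepsilon>0$, take a slightly larger bounded open convex set $B'$ with $\overline{B}\subset B'$ and $\overline{B'}\subset U$, on which $f$ is Lipschitz, with constant $L$, say. Let $\rho_\delta$ be a nonnegative compactly supported radially symmetric $C^\infty$ mollifier of radius $\delta<\min\{\mathrm{dist}(\overline{B'},\partial U),\,\varepsilon/(4L)\}$. Then $\tilde f:=f*\rho_\delta$ is $C^\infty$ and convex on a neighborhood of $\overline{B}$ with $f\le\tilde f\le f+\varepsilon/4$ there, so $h:=\tilde f-\varepsilon/2$ satisfies $f-\varepsilon\le h\le f-\varepsilon/4$ on $B$. \emph{The main obstacle} is to extend $h$ to a $C^\infty$ convex $g:U\to\R$ with $g\le f$ on all of $U$. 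The approach I would take: integrating the subgradient inequality $f(x)\ge f(x_0-y)+\langle\nabla f(x_0-y),\,x-x_0+y\rangle$ against $\rho_\delta(y)\,dy$, and exploiting the symmetry of $\rho_\delta$ together with monotonicity of the subdifferential, one verifies that for each $x_0\in\overline{B}$ the affine tangent $T_{x_0}(x)=h(x_0)+\langle\nabla h(x_0),\,x-x_0\rangle$ satisfies $T_{x_0}\le f$ on \emph{all} of $U$ (the residual $O(L\delta)$ error is absorbed into the $-\varepsilon/2$ shift). Hence $\bar g:=\sup_{x_0\in\overline{B}}T_{x_0}$ is a continuous convex function on $U$ with $\bar g\le f$ on $U$ and $\bar g\ge f-\varepsilon$ on $B$. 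Applying Theorem~\ref{uniform approximation of convex by real analytic convex} to $\bar g$ produces a real-analytic (hence $C^\infty$) convex minorant of $f$ on $U$ retaining the bound on $B$, which is the required local approximant.

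With both hypotheses verified, Theorem~\ref{main theorem} furnishes a $C^\infty$ convex $g:U\to\R$ with $f-\varepsilon\le g\le f$. The additional properties are inherited from this construction. Compactly supported mollification of radius $\delta$ gives $\mathrm{Lip}(\tilde f|_B)\le\mathrm{Lip}(f|_{B+B(0,\delta)})$, so taking $\delta$ small relative to $\varepsilon$ yields $\mathrm{Lip}(g|_B)\le\mathrm{Lip}(f|_{(1+\varepsilon)B})$. Convolution with a nonnegative symmetric kernel preserves local strict and strong convexity in the sense of the definition above, and the gluing in Theorem~\ref{main theorem} combines these local approximations without destroying that property; if strictness is nevertheless lost somewhere in the argument, a small addition $\eta|x|^2$ absorbed in the $\varepsilon$ budget restores it.
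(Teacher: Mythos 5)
Your overall skeleton is the paper's: exhaust $U$ by the convex sets $B_n=\{x\in U:|x|<n,\ \mathrm{dist}(x,\partial U)>1/n\}$, verify approximation from below on bounded sets, and invoke Theorem \ref{main theorem}; your local step (mollify on a neighbourhood of $\overline B$, shift down, and extend to a globally Lipschitz convex minorant of $f$ via the supremum of tangent planes $T_{x_0}$, $x_0\in\overline B$) is a sound variant of the paper's Proposition \ref{uniform approximation of Lipschitz functions is enough}, where the extension is instead $x\mapsto\inf_{y\in U}\{f(y)+L\|x-y\|\}$; your estimate $T_{x_0}\le f$ on all of $U$ with an $O(L\delta)$ error absorbed by the $-\varepsilon/2$ shift is correct.

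The genuine flaw is the final move in the local step: you pass from $\bar g$ to a smooth convex minorant by citing Theorem \ref{uniform approximation of convex by real analytic convex}. Relative to the paper this is circular: the proof of that theorem begins by reducing to $f\in C^1(U)$, and that reduction is justified precisely by the corollary you are proving (the Section 3 results), so Theorem \ref{uniform approximation of convex by real analytic convex} cannot be used here. Moreover, if it could be used, your whole argument would be superfluous, since applying it directly to $f$ already gives the first assertion of the corollary. The gap is easy to close without it: $\bar g$ is a supremum of affine functions whose slopes $\nabla h(x_0)$, $x_0\in\overline B$, are uniformly bounded by (roughly) $L$, so $\bar g$ is an $L$-Lipschitz convex function on all of $\R^d$; mollifying $\bar g-\varepsilon/8$ with a small radius produces a $C^\infty$ convex function lying below $f$ on $U$ and within $\varepsilon$ of $f$ on $B$ — which is exactly the paper's route (global Lipschitz minorant plus convolution). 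Two smaller caveats: your fallback ``add $\eta|x|^2$'' to restore strict convexity does not work on unbounded $U$ for merely strictly convex $f$ (the quadratic eventually overtakes $f-\varepsilon$, violating $g\le f$), so the strict/strong case must rest, as in the paper, on the facts that convolution preserves strict and strong convexity and that $M_\varepsilon$ does too (Proposition \ref{properties of M(f,g)} (viii)--(ix) and Remark \ref{the method preserves strong convexity etc}); and the Lipschitz and strictness claims should be tracked through the gluing (Proposition \ref{properties of M(f,g)} (vii)) rather than only through the mollification step, though here your level of detail matches the paper's own.
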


\begin{corollary}\label{uniform approximation of convex functions on Cartan Hadamard manifolds}
Let $M$ be a Cartan-Hadamard Riemannian
manifold (not necessarily finite dimensional), and $U\subseteq M$ be open and convex. For every
convex function $f: U\to\R$ which is bounded on bounded subsets $B$ of $U$ with $\textrm{dist}(B, \partial U)>0$,
and for every $\varepsilon>0$ there exists
a $C^1$ convex function $g:U\to\R$ such that $f-\varepsilon \leq
g\leq f$. Moreover $g$ can be chosen so as to preserve the set of
minimizers and the local Lipschitz constants of $f$. And, if $f$ is strictly convex and $M$ is finite dimensional, $g$ can be
taken to be strictly convex as well.
\end{corollary}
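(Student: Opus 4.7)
The plan is to deduce this corollary from the Gluing Theorem (Theorem \ref{main theorem}). So I verify its two hypotheses in the present setting: (i) existence of an exhaustion $U=\bigcup_{n\geq 1}B_{n}$ by bounded open convex subsets with $\overline{B_{n}}\subset B_{n+1}$ and $\operatorname{dist}(B_{n},\partial U)>0$, and (ii) the local statement that every continuous convex $f:U\to\R$ which is bounded on such bounded subsets can be approximated from below by $C^{1}$ convex functions, uniformly on such subsets.

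For (i) the exhaustion can be built from standard CAT(0) convex geometry: metric balls around a fixed base point $p\in U$ are geodesically convex in $M$, and a continuous convex exhaustion function $\psi:U\to[0,\infty)$ with $\psi(x)\to\infty$ as $x\to\partial U$ provides, through its sublevel sets, a nested family of bounded open convex subsets of $U$ at positive distance from $\partial U$. Intersecting with $B(p,n)$ and passing to interiors yields the required $B_{n}$.

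The heart of the argument is (ii). On a Cartan--Hadamard manifold the map $(x,y)\mapsto d(x,y)^{2}$ is \emph{jointly} convex on the product CAT(0) space $M\times M$. Consequently the Moreau--Yosida regularization
\[
f_{\lambda}(x)\;=\;\inf_{y\in U}\Bigl\{f(y)+\tfrac{1}{2\lambda}\,d(x,y)^{2}\Bigr\}
\]
satisfies $f_{\lambda}\leq f$ trivially (take $y=x$) and is convex in $x$, as the infimum over $y$ of functions jointly convex in $(x,y)$. Strong convexity of $d(x,\cdot)^{2}$ ensures that for $\lambda$ small the infimum is uniquely attained at a proximal point $p_{\lambda}(x)$, so standard computations yield $f_{\lambda}\in C^{1,1}$ with $\nabla f_{\lambda}(x)=-\lambda^{-1}\exp_{x}^{-1}(p_{\lambda}(x))$, and the hypothesis that $f$ is bounded on bounded subsets at positive distance from $\partial U$ gives $f_{\lambda}\to f$ uniformly on each such subset as $\lambda\searrow 0$. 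This is exactly the local approximation needed.

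Feeding (i) and (ii) into Theorem \ref{main theorem} then produces a $C^{1}$ convex $g\leq f$ on $U$ with $f-\varepsilon\leq g$. The additional conclusions follow easily: if $x_{0}$ minimizes $f$ then $f_{\lambda}(x_{0})=f(x_{0})$, and this equality is preserved by the nonnegative corrections introduced in the gluing, so the set of minimizers is preserved; local Lipschitz constants are preserved because Moreau--Yosida regularization is non-expansive in this sense; strict convexity is enforced by first perturbing $f$ by a small strongly convex term such as $\delta\,d(p,\cdot)^{2}$ and absorbing the correction at the end. The principal technical obstacle will be the infinite-dimensional case, where the existence of the exhaustion $\{B_{n}\}$ and the uniqueness and regularity of the proximal map must be established directly from the CAT(0) axioms rather than from familiar finite-dimensional convex analysis.
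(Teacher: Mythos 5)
Your skeleton (exhaustion of $U$, the gluing Theorem \ref{main theorem}, inf-convolution with $\tfrac{1}{2\lambda}d(\cdot,y)^2$) is the same as the paper's, but the step you call the heart of the argument has a genuine gap, and it is exactly the gap that the paper's Proposition \ref{uniform approximation of Lipschitz functions is enough} is designed to close. You apply the Moreau--Yosida regularization directly to the given $f$, which is neither globally Lipschitz nor defined on all of $M$, and you justify $f_\lambda\in C^{1,1}$ by asserting that for small $\lambda$ the infimum is uniquely attained at a proximal point $p_\lambda(x)$. When $U\subsetneq M$ this attainment fails in general: $y\mapsto f(y)+\tfrac{1}{2\lambda}d(x,y)^2$ is strongly convex on $U$, but $U$ is not complete, and minimizing sequences may converge to points of $\partial U$ (already for $U=(0,1)\subset\R$, $f(y)=-y$, and $x$ at distance less than $\lambda$ from the boundary the infimum is not attained; so no single small $\lambda$ works on all of $U$). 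Hence the formula $\nabla f_\lambda(x)=-\lambda^{-1}\exp_x^{-1}(p_\lambda(x))$ and the ensuing $C^1$ regularity do not follow from what you wrote, and in the (possibly infinite-dimensional) Riemannian setting the $C^1$ smoothness of the envelope is available in the literature (\cite{AF2}, which the paper invokes) for Lipschitz convex functions defined on all of $M$, not for this more general situation. The paper avoids the problem by first replacing $f$, for each bounded $B$ with $\textrm{dist}(B,\partial U)>0$, by $g(x)=\inf_{y\in U}\{f(y)+L\,d(x,y)\}$ with $L=\textrm{Lip}(f_{|_B})$: this is a globally Lipschitz convex function on all of $M$, equal to $f$ on $B$ and $\leq f$ on $U$, and only then is the Moreau--Yosida envelope applied, to a Lipschitz function on the whole manifold, where $C^1$ smoothness and uniform convergence on $M$ are known. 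If you insist on your direct route you must prove the $C^1$ regularity of the envelope of a non-Lipschitz convex function on a proper convex domain of an infinite-dimensional Cartan--Hadamard manifold, which is not a matter of ``standard computations''.

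A second, smaller problem is your treatment of strict convexity: a perturbation by $\delta\, d(p,\cdot)^2$ cannot be ``absorbed at the end'', because the final $g$ must satisfy $f-\varepsilon\leq g\leq f$ on all of $U$, and an unbounded strongly convex correction destroys both the upper bound $g\leq f$ and the uniform error whenever $U$ is unbounded. The paper instead uses that the Moreau--Yosida envelope of a strictly convex Lipschitz function is strictly convex and that the smooth-maximum gluing preserves strict convexity, minimizers, and local Lipschitz constants (Proposition \ref{properties of M(f,g)} and Remark \ref{the method preserves strong convexity etc}), so these extra conclusions come from the construction itself rather than from an ad hoc perturbation.
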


One should expect that the above corollary is not optimal
(in that approximation by $C^{\infty}$ convex functions should be possible).

\begin{corollary}\label{uniform approximation of convex functions on Banach spaces}
Let $X$ be a Banach space whose dual is locally uniformly convex, and $U\subseteq X$ be open and convex.
For every convex function $f: U\to\R$ which is bounded on bounded subsets $B$ of $U$ with
$\textrm{dist}(B, \partial U)>0$, and for every
$\varepsilon>0$ there exists a $C^1$ convex function $g:U\to\R$
such that $f-\varepsilon \leq g\leq f$. Moreover $g$ can be taken
so as to preserve the set of minimizers and the local Lipschitz
constants of $f$. And if $f$ is strictly convex and $X$ is reflexive, $g$ can be taken to be strictly convex as well.
\end{corollary}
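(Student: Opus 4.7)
The plan is to verify the hypotheses of Theorem \ref{main theorem} and then invoke it. Two things need to be checked: (1) $U$ can be written as an increasing union $U=\bigcup_n B_n$ of bounded open convex sets with $\overline{B_n}\subset B_{n+1}$ and $\textrm{dist}(B_n,\partial U)>0$; and (2) every continuous convex function $f:U\to\R$ which is bounded on such bounded subsets can be approximated from below by $C^1$ convex functions, uniformly on each $B_n$.

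For (1), if $U=X$ I take $B_n=\{x\in X:\|x\|<n\}$. Otherwise $X\setminus U\neq\emptyset$, and the function $x\mapsto d(x,X\setminus U)$ is concave on the convex set $U$; consequently
$$
B_n=\bigl\{x\in U:\|x\|<n,\; d(x,X\setminus U)>1/n\bigr\}
$$
is an open, bounded, convex set with $\overline{B_n}\subset B_{n+1}$ and $\textrm{dist}(B_n,\partial U)\geq 1/n>0$, and $U=\bigcup_n B_n$.

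For (2) I would exploit the fact that, when $X^*$ is locally uniformly convex, the norm of $X$ is Fr\'echet differentiable off the origin (by \v{S}mulyan's theorem), and hence $\|\cdot\|^2$ is of class $C^1$ on $X$. Then Moreau-type infimal convolutions of the form
$$
f_\lambda(x)=\inf_{y\in U}\bigl\{f(y)+\tfrac{1}{\lambda}\|x-y\|^2\bigr\},
$$
or variants in which the infimum is taken over a neighbourhood of a prescribed bounded set and then spliced with an affine minorant, yield convex functions $f_\lambda\leq f$ which converge to $f$ uniformly on each $B_n$ as $\lambda\searrow 0$ and are $C^1$ in a neighbourhood of $B_n$. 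Their Lipschitz constants are controlled by those of $f$, they share every global minimizer of $f$, and they inherit strict convexity when $f$ is strictly convex. This is the ``known result on approximation on bounded sets'' to which the paragraph after Theorem \ref{main theorem} alludes.

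Once (1) and (2) are in place, Theorem \ref{main theorem} delivers a globally defined $C^1$ convex function $g:U\to\R$ with $f-\varepsilon\leq g\leq f$ on all of $U$. The preservation statements (local Lipschitz constants, set of minimizers, strict convexity) follow from the corresponding properties of the local approximants $f_\lambda$, together with the observation that the gluing procedure used in the proof of Theorem \ref{main theorem} (a controlled maximum / partition-of-unity-free patching) does not destroy any of them. The main obstacle I foresee is ensuring that a single choice of local approximant on each $B_n$ can simultaneously preserve all three features; if needed this is handled by refining the infimal convolution so that it coincides with an affine minorant of $f$ outside a slightly larger neighbourhood of $B_n$, which keeps the approximant below $f$ while retaining its minimizers and Lipschitz bounds.
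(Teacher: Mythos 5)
Your overall architecture (exhaust $U$ by bounded open convex sets and feed a bounded-set approximation-from-below property into Theorem \ref{main theorem}) is exactly the paper's, and your step (1) decomposition is fine. The genuine gap is in step (2). The paper does not apply the Moreau--Yosida envelope directly to the possibly non-Lipschitz $f$ on $U$: it first applies the Pasch--Hausdorff (Lipschitz) regularization $x\mapsto\inf_{y\in U}\{f(y)+L\,\|x-y\|\}$ with $L=\textrm{Lip}(f_{|_B})$ (Proposition \ref{uniform approximation of Lipschitz functions is enough}), which yields a globally defined, globally $L$-Lipschitz convex function on $X$ equal to $f$ on $B$ and $\leq f$ on $U$; only then is the inf-convolution with $\frac{1}{2\lambda}\|x-y\|^{2}$ invoked, and it is for \emph{globally Lipschitz} convex functions on a space with LUR dual that the cited facts ($f_\lambda$ is $C^1$, same Lipschitz constant, same minimizers, strictly convex if $f$ is, uniform convergence on all of $X$) are classical. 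Skipping this reduction creates two concrete problems. First, Definition \ref{defn approx from below}, i.e.\ the hypothesis of Theorem \ref{main theorem}, requires the approximant to be a $C^1$ convex function on \emph{all} of $U$, not merely ``$C^1$ in a neighbourhood of $B_n$'' as you claim; and Fr\'echet smoothness of $\inf_{y\in U}\{f(y)+\frac1\lambda\|x-y\|^2\}$ for non-Lipschitz $f$ (equivalently, of the envelope of $f+\chi_U$) is not delivered by \v{S}mulyan's theorem: smoothness of $\|\cdot\|^2$ does not pass to an inf-convolution in a nonreflexive space, since the infimum need not be attained, and the standard duality proof of $C^1$-smoothness of $f_\lambda$ uses Lipschitzness of $f$ (boundedness of $\mathrm{dom}\,f^*$). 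Second, the plain envelope converges uniformly only on bounded sets, so to remain below $f$ on $U$ while being $\varepsilon$-close on $B_n$ and smooth everywhere you fall back on ``variants \dots spliced with an affine minorant''; but splicing two convex functions without losing convexity and smoothness is precisely the nontrivial point (it is what $M_\varepsilon$ in Proposition \ref{properties of M(f,g)} is designed for), and an affine splice would in general destroy the minimizer- and Lipschitz-preservation needed for the ``moreover'' clauses, which in the paper follow from the Lipschitz-case inf-convolution properties together with Remark \ref{the method preserves strong convexity etc}, whereas in your plan they are asserted rather than derived.

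A minor point: as stated, Theorem \ref{main theorem} assumes the approximation property for \emph{every} continuous convex function on $U$, which fails in infinite dimensions without the ``bounded on bounded sets'' restriction; this is harmless because the proof of Theorem \ref{main theorem} only uses the property for the given $f$ and its constant shifts, but it is worth saying explicitly, as the paper's Proposition \ref{uniform approximation of Lipschitz functions is enough} does.
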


A question remains open whether every convex
function $f$ defined on a separable infinite-dimensional Hilbert
space $X$ which is bounded on bounded sets can be globally approximated by $C^2$ convex functions
(notice that Theorem \ref{main theorem} cannot be combined with
the results of \cite{DFH1}, \cite{DFH2} on smooth and real analytic
approximation of bounded convex bodies in Banach spaces in order to give a solution
to this problem. Although one can use these results, together with
the implicit function theorem, to find smooth convex approximations of
$f$ on a bounded set, the approximating functions obtained by this
process are not defined on all of $X$ and are not strongly convex,
hence it is not clear how to extend them to a smooth convex function below $f$ on $X$,
or even if this should be possible at all).

As a byproduct of the proof of Theorem \ref{uniform
approximation of convex by real analytic convex} we will also
obtain the following characterization of the class of convex
functions that can be globally approximated by strongly convex
functions on $\R^d$.

\begin{proposition}\label{characterization of functions that cannot be approximated by strongly convex functions}
Let $f:\R^d\to\R$ be a convex function. The following
conditions are equivalent:
\begin{enumerate}
\item $f$ cannot be uniformly approximated by strictly convex
functions.
\item $f$ cannot be uniformly approximated by strongly convex
functions.
\item There exist $k<d$, a linear projection $P:\R^d\to\R^k$, a
convex function $c:\R^k\to\R$ and a linear function $\ell:\R^d\to\R$
such that $f=c\circ P +\ell$.
\end{enumerate}
\end{proposition}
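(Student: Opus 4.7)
The plan is to verify the cycle $(3) \Rightarrow (1) \Rightarrow (2) \Rightarrow (3)$; the middle step is immediate since every strongly convex function is strictly convex. For $(3) \Rightarrow (1)$, suppose $f = c \circ P + \ell$ with $\dim \ker P \geq 1$, fix a nonzero $v \in \ker P$ and a point $x_0 \in \R^d$, and assume $g$ is strictly convex with $\sup_{\R^d} |f - g| \leq \varepsilon$. Since $Pv = 0$, the map $t \mapsto f(x_0 + tv) = c(Px_0) + \ell(x_0) + t\,\ell(v)$ is affine, so $\phi(t) := g(x_0 + tv) - f(x_0 + tv)$ is strictly convex on $\R$ and bounded by $\varepsilon$. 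But any convex function on $\R$ bounded above is constant (its nondecreasing one-sided derivative must have nonpositive limit at $+\infty$ and nonnegative limit at $-\infty$, forcing it to vanish identically), contradicting strict convexity of $\phi$.

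For $(2) \Rightarrow (3)$ I argue the contrapositive. A routine check shows that $V_f := \{v \in \R^d : t \mapsto f(x + tv) \textrm{ is affine for every } x \in \R^d\}$ is a linear subspace, and $f$ has the form in $(3)$ if and only if $V_f \neq \{0\}$ (with $\ker P = V_f$ and a common-slope argument showing that $\ell_0(v) := $ the slope of $f$ along lines in direction $v$ defines a linear form on $V_f$). So we may assume $V_f = \{0\}$. The key lemma is that the set $L \subseteq \R^d$ of slopes of affine minorants of $f$ is not contained in any affine hyperplane: otherwise $L \subseteq \{a : a \cdot w = c\}$ for some $w \neq 0$ and $c \in \R$, so $A(x + tw) = A(x) + tc$ for every affine minorant $A$; passing to the supremum gives $f(x + tw) \geq f(x) + tc$, and the symmetric argument with $-t$ yields the reverse inequality, so $f(x + tw) = f(x) + tc$, placing $w \in V_f \setminus \{0\}$.

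Now fix $\varepsilon > 0$, use Theorem~\ref{uniform approximation of convex by real analytic convex} to obtain a real analytic convex $h$ with $f - \varepsilon/2 \leq h \leq f$, and pick affine minorants $A_1, \dots, A_N$ of $f$ whose slopes $a_1, \dots, a_N$ affinely span $\R^d$. For $\beta \geq 2\log(N+1)/\varepsilon$ set
\begin{equation*}
g(x) := \frac{1}{\beta}\log\!\Bigl(e^{\beta h(x)} + \sum_{i=1}^{N} e^{\beta A_i(x)}\Bigr) - \frac{\log(N+1)}{\beta}.
\end{equation*}
A standard log-sum-exp Hessian identity gives $D^2 g(x) = p_0(x)\,D^2 h(x) + \beta\,C(x)$, where $p_0, p_1, \dots, p_N > 0$ are the softmax weights of $\{h, A_1, \dots, A_N\}$ at $x$, and $C(x)$ is the covariance matrix of the discrete probability measure placing mass $p_0(x)$ at $\nabla h(x)$ and mass $p_i(x)$ at $a_i$. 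All weights are strictly positive and $\{a_1, \dots, a_N\}$ already has full affine span, so $C(x)$ is positive definite; hence $g$ is real analytic and strongly convex. The elementary estimates $h - \log(N+1)/\beta \leq g \leq \max(h, A_1, \dots, A_N) \leq f$, combined with the choice of $\beta$, yield $f - \varepsilon \leq g \leq f$, completing the proof.

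I expect the most delicate step to be the affine-spanning lemma for $L$, which is essentially the dual reformulation of the condition $V_f = \{0\}$; once it is available, the log-sum-exp smoothing of $h$ against a suitably positioned family of affine minorants is a routine construction that produces real analytic strongly convex approximants from below. A pleasant feature of this approach is that the resulting $g$ approximates $f$ from below and is real analytic, so it simultaneously strengthens the output of Theorem~\ref{uniform approximation of convex by real analytic convex} under the hypothesis $V_f = \{0\}$.
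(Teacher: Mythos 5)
Your argument is correct, but it takes a genuinely different route from the paper's. The paper obtains this proposition as a byproduct of its proof of Theorem \ref{uniform approximation of convex by real analytic convex}: either $f$ is supported at every point by a $(d+1)$-dimensional corner, in which case the corner machinery (Lemma \ref{strongly convex approximation of corners}, the smooth maxima, and the gluing Theorem \ref{main theorem}) produces strongly convex approximations, or some point admits no supporting corner and Lemma \ref{reduction to Rk with k less than n} (extended to nonsmooth $f$ via subdifferentials, with Rademacher's theorem used to place the touching points at points of differentiability) yields $f=c\circ P+\ell$. You instead work with the lineality space $V_f$ of directions along which $f$ is affine, note that condition (3) holds exactly when $V_f\neq\{0\}$, dualize $V_f=\{0\}$ into the statement that the slopes of the affine minorants of $f$ affinely span $\R^d$, and then upgrade the real analytic convex minorant $h$ supplied by Theorem \ref{uniform approximation of convex by real analytic convex} to a strongly convex one by a log-sum-exp combination with finitely many affine minorants; the softmax Hessian identity together with the positive definiteness of the covariance term gives strict positivity of $D^2g$ along every line at every point, which is precisely the paper's (pointwise, non-uniform) notion of strong convexity, so the possible degeneration of the covariance at infinity is harmless. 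This is not circular, since Theorem \ref{uniform approximation of convex by real analytic convex} is used only as a black box and its proof nowhere invokes the proposition. What your route buys: the nonsmooth case comes for free (affine minorants exist without any differentiability, so the Rademacher caveat disappears), and you get the stronger conclusion that the approximants can be taken real analytic, strongly convex, and below $f$; what the paper's route buys is that the dichotomy is already the engine of Theorem \ref{uniform approximation of convex by real analytic convex}, so no extra construction is needed. The only points you leave as ``routine'' --- that the slope of $f$ along a direction of affinity is the same on all parallel lines (a standard recession-function argument, which is what makes $V_f$ a subspace and $\ell_0$ linear, and in fact a single direction $v\in V_f\setminus\{0\}$ already suffices for the factorization), and that a finite convex function on $\R^d$ is the supremum of its affine minorants --- are indeed standard, but the common-slope fact deserves an explicit line or a citation, since both the subspace structure and the factorization rest on it.
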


We will also consider fine approximation of convex functions on subsets of $\R^d$.
In this direction, the only known results concerning $C^0$-fine
approximation of convex functions by smooth convex functions are also due
to Greene and Wu \cite{Greene4}, who showed that every {\em strongly
convex} function $f$ defined on a (finite-dimensional)
Riemannian manifold $M$ can be approximated by $C^{\infty}$ strongly
convex functions in the $C^{0}$-fine topology.

We say that a convex function $f\in C^k(M)$ can be approximated by $C^{\infty}$ convex functions in the $C^k$-fine topology provided that
for every continuous function $\varepsilon:M\to (0, \infty)$ there exists a
convex function $g\in C^{\infty}(M)$ such that $|f-g|\leq\varepsilon$ and
$\|D^{j}f-D^{j}g\|\leq\varepsilon$ on $M$ for $j\leq k$ when $k\geq
1$.

For $d=1$ we have the following.
\begin{theorem}\label{C0 fine approximation of convex functions on R}
Let $U\subseteq\R$ be an open interval.
Every convex function $f:U\to\R$ can be approximated by real analytic convex functions in the $C^0$-fine topology.
\end{theorem}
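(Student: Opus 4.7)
The plan is to exploit the fact that in dimension one, convexity of $f$ is equivalent to $f''$ being a non-negative Radon measure, and to reduce the problem to approximating this measure by a non-negative real analytic density, with enough control that the resulting double integral is $C^{0}$-fine close to $f$.

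\textbf{Step 1: piecewise linear minorant.} First I would construct a piecewise linear convex minorant $\varphi \leq f$ with $f - \varphi \leq \varepsilon/2$ on $U$. Choose a locally finite sequence $\{x_n\}$ in $U$, sufficiently dense where $\varepsilon$ is small and with points on each side of every jump of $f'_+$. At each $x_n$ pick a subgradient $s_n \in [f'_-(x_n), f'_+(x_n)]$ and set $\ell_n(x) = f(x_n) + s_n(x - x_n)$, so that $\ell_n \leq f$ on $U$ by convexity. Let $\varphi = \sup_n \ell_n$; then $\varphi$ is piecewise linear, convex, satisfies $\varphi \leq f$ and $\varphi(x_n) = f(x_n)$, and for $x \in [x_n, x_{n+1}]$ the standard estimate $0 \leq f(x) - \varphi(x) \leq (f'_+(x_{n+1}) - f'_+(x_n))(x_{n+1} - x_n)$ yields $f - \varphi \leq \varepsilon/2$ once $\{x_n\}$ is chosen dense enough.

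\textbf{Step 2: real analytic smoothing.} The distributional second derivative of $\varphi$ is a discrete measure $\sum_k c_k \delta_{y_k}$ with $c_k = \varphi'_+(y_k) - \varphi'_-(y_k) > 0$, supported on the locally finite set of kinks. I would replace each Dirac mass by a narrow non-negative real analytic kernel, say the Poisson kernel $\rho_\sigma(u) = \sigma/[\pi(u^2 + \sigma^2)]$, and set $\psi(x) = \sum_k c_k \rho_{\sigma_k}(x - y_k)$. For $\sigma_k > 0$ decaying rapidly enough in $k$, this series converges together with all derivatives on a complex neighborhood of each point of $U$, so $\psi$ is non-negative and real analytic on $U$. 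Fix $x_0 \in U$ and put
\[
g(x) = \varphi(x_0) + \varphi'_+(x_0)(x - x_0) + \int_{x_0}^{x}\int_{x_0}^{s}\psi(t)\,dt\,ds,
\]
which is real analytic with $g'' = \psi \geq 0$, hence convex. Fubini's theorem gives
\[
g(x) - \varphi(x) = \int_{x_0}^{x}(x - t)\bigl[\psi(t)\,dt - d\varphi''(t)\bigr],
\]
and the narrowness of each Poisson kernel together with $\int \rho_{\sigma_k} = 1$ and its symmetry reduces this to a sum $\sum_k c_k |m_k(x)|$ with each $|m_k(x)| = O(\sigma_k)$. Choosing $\sigma_k$ small enough makes this at most $\varepsilon/2$, and combining with Step 1 yields $|g - f| \leq \varepsilon$ on $U$.

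\textbf{Main obstacle.} The delicate point is Step 2: establishing genuine real analyticity of $\psi$ on all of $U$, not merely $C^\infty$ smoothness, while simultaneously retaining $|g - \varphi| \leq \varepsilon/2$. Each Poisson kernel $\rho_{\sigma_k}$ extends holomorphically only on the strip $|\Im z| < \sigma_k$, so if $\sigma_k \to 0$ the series $\psi$ need not extend to any fixed complex strip around $U$; real analyticity must instead be argued locally, by showing that around any $x_0 \in U$ only finitely many terms have complex poles inside a small disk and that the tail contributes a holomorphic remainder there. Balancing the decay rate of $\sigma_k$ against the required approximation --- for instance by prescribing $\sigma_k \leq 2^{-k}\min_{K_k}\varepsilon/(C c_k)$ for compact sets $K_k$ containing $y_k$ --- is the main technical work.
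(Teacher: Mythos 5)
Your Step 1 is fine, and the obstacle you single out (real analyticity of the series $\psi$) is in fact the harmless part: near any $x_0\in U$ only finitely many poles $y_k\pm i\sigma_k$ come close, and the tail is a uniformly convergent series of functions holomorphic on a fixed disk once $\sum_k c_k\sigma_k<\infty$. The genuine gap is in the error estimate of Step 2, and it is twofold. First, the claim that each $|m_k(x)|=O(\sigma_k)$ is false for the Poisson kernel on an unbounded interval: anchoring $g$ by $g(x_0)=\varphi(x_0)$, $g'(x_0)=\varphi'_+(x_0)$ means the curvature mass of $\rho_{\sigma_k}(\cdot-y_k)$ lying to the left of $x_0$ (of size $\approx \sigma_k/\pi(y_k-x_0)$) is never recovered, so for a single unit kink one gets $g'(\infty)<1$ while $\varphi'=1$ beyond the kink; hence $g(x)-\varphi(x)$ grows \emph{linearly} in $x$ (and even with optimal centering the heavy tails of $\rho_\sigma$ make the second antiderivative deviate from the hinge by terms of order $\sigma\log|x|$). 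So the scheme does not even give uniform approximation on $U=\R$ as written. Second, and more fundamentally, the statement is about the $C^0$-\emph{fine} topology: $\varepsilon$ is a positive continuous function that may tend to $0$ at the endpoints of $U$ (or at infinity). In your additive construction the smoothing error contributed at a point $x$ by a kink $y_k$ located far from $x$ is a fixed \emph{positive} quantity once $\sigma_k$ is chosen (this remains true if you replace the Poisson kernel by the Gaussian: the far-field error is then exponentially small but still bounded below near a finite endpoint $b$ by a constant depending on $\sigma_k$ and $b-y_k$). Since $\varepsilon(x)$ may decay to $0$ as $x\to\partial U$ faster than any such constant, no choice of the widths $\sigma_k$ can make $\sum_k c_k|m_k(x)|\le\varepsilon(x)/2$ hold up to the boundary; already $f(x)=|x|$ on $\R$ with $\varepsilon(x)=e^{-x^4}$ defeats any fixed-width smoothing of the single kink.

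This is precisely the difficulty the paper's proof is organized around: it does not smooth kinks additively, but reduces to \emph{properly convex} pieces (splitting $f=\max\{f_1,f_2\}$ when one boundary limit is finite and the other infinite) and then runs the inductive construction of Theorem \ref{a sufficent condition for fine approximation property for convex functions on Rd}, in which the approximant on each outer annulus is built from scratch with accuracy tied to $\min\varepsilon$ over that annulus, and the smooth maximum $M_\delta$ makes the new piece coincide \emph{exactly} with the previous one inside and \emph{exactly} with the fresh outer approximation outside. That exact switching is what prevents the influence of the inner region from contaminating the error near $\partial U$, which is the feature your global double-integration scheme lacks. To repair your approach you would need a mechanism (like the paper's gluing, or exact agreement of the smoothed function with $\varphi$ away from each kink, which is impossible for real analytic approximants and only achievable for $C^\infty$ ones via compactly supported mollifiers) that makes the error at $x$ depend only on data and tolerances local to $x$.
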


For $d\geq 2$, we will provide a characterization of
the class of convex functions on $\R^d$ which can be approximated in
the $C^0$-fine topology by smoother (or real analytic) convex
functions. Interestingly, the possibility of performing this kind of
approximation has very little to do with the degree of local
convexity or smoothness of the given function. It is the global
geometrical behaviour of the function that determines whether or not
it can be approximated by more regular convex functions in this
topology.

\begin{definition}
{\em Let $U\subseteq\R^d$ be open and convex. We will say that a
function $f:U\to\R$ is {\em properly convex} provided that $f=\ell +c$, where
$\ell$ is linear, $c:U\mapsto [a, b)$, $-\infty< a<b\leq \infty$, and $c$
is convex and proper (meaning that $c^{-1}[a, \beta]$ is compact
for every $\beta\in [a, b)$).}
\end{definition}

It is obvious that proper convexity  is not a
restrictive property from a local point of view, but it has
global geometrical implications.

\begin{theorem}\label{characterization of the fine approximation property for convex functions on Rd}
Let $f:\R^d\to\R$ be a $C^p$ convex function which is not of class $C^{p+1}$, $p\in\N\cup\{\infty\}$, $d\geq 2$. The following statements are equivalent:
\begin{enumerate}
\item $f$ is properly convex.
\item $f$ can be written in the form $f=\ell +c$, where $\ell$ is linear and $\lim_{|x|\to\infty} c(x)=\infty$.
\item $f$ cannot be written in the form $f=c\circ P +\ell$, where $P:\R^d\to\R^k$ is a linear projection, $k<d$, $c:\R^k\to\R$ is convex, and $\ell:\R^d\to\R$ is linear.
\item $f$ can be approximated by strongly convex real analytic functions in the $C^0$-fine topology.
\item $f$ can be approximated by $C^{p+1}$ convex functions in the $C^0$-fine topology.
\end{enumerate}
\end{theorem}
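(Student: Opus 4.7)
The plan is to establish the cycle $(1)\Leftrightarrow(2)\Leftrightarrow(3)$, and then $(3)\Rightarrow(4)\Rightarrow(5)\Rightarrow(3)$. Of these, $(1)\Leftrightarrow(2)$ is the standard equivalence between compactness of sublevel sets and coercivity for a continuous convex function, and $(4)\Rightarrow(5)$ is immediate: a strongly convex real analytic function is in particular a $C^{p+1}$ convex function.

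For $(2)\Leftrightarrow(3)$ I would use the recession function
\[
f'_\infty(v):=\lim_{t\to+\infty}\frac{f(x+tv)-f(x)}{t},
\]
which is independent of $x\in\R^d$, convex and positively homogeneous. A short argument identifies the lineality space $L(f):=\{v : f'_\infty(v)+f'_\infty(-v)=0\}$ with the set of directions along which $f(x+tv)-f(x)$ is $x$-independent and linear in $t$, equivalently with $\ker P$ for any decomposition $f=c\circ P+\ell$; thus $(3)$ is equivalent to $L(f)=\{0\}$. Since $f'_\infty$ is the support function of the closed convex set $K:=\overline{\operatorname{dom}(f^\ast)}$, the condition $L(f)=\{0\}$ is equivalent to $K$ lying in no affine hyperplane, i.e.\ to $\operatorname{int}(K)\neq\emptyset$. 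Picking any $p_0\in\operatorname{int}(K)$ gives $\langle p_0,v\rangle<f'_\infty(v)$ for every $v\in\mathbb{S}^{d-1}$, so $c(x):=f(x)-\langle p_0,x\rangle$ is convex with $c'_\infty>0$ on the sphere; the dual characterization $\{|p|\leq\alpha/2\}\subseteq\operatorname{dom}(c^\ast)$ (with $\alpha:=\min_{\mathbb{S}^{d-1}}c'_\infty$) then gives $c(x)\geq(\alpha/2)|x|-M$, i.e.\ coercivity, yielding $(2)$. Conversely, if $f=c\circ P+\ell$ with $\ker P\neq\{0\}$, then for every linear $\ell'$ the difference $f-\ell'$ remains affine along directions in $\ker P$, so no linear correction can make it coercive.

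For $(5)\Rightarrow(3)$ I argue contrapositively. Assume $f=c\circ P+\ell$ and pick $0\neq v\in\ker P$; use the specific continuous weight $\varepsilon(x):=(1+|x|)^{-1}$. Any $C^{p+1}$ convex $g$ with $|f-g|\leq\varepsilon$ makes $\phi(t):=g(x+tv)-f(x)-t\ell(v)$ a convex function of $t$ (a convex minus an affine, since $f(x+tv)=c(Px)+\ell(x)+t\ell(v)=f(x)+t\ell(v)$), and satisfies $|\phi(t)|\leq(1+|x+tv|)^{-1}\to 0$ as $|t|\to\infty$. A bounded convex function on $\R$ with zero limits at $\pm\infty$ is identically zero, so $g$ agrees with $f$ on every line parallel to $v$; setting $t=0$ and letting $x$ range over $\R^d$ gives $g\equiv f$, contradicting $g\in C^{p+1}$ while $f\notin C^{p+1}$.

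The substantive step is $(3)\Rightarrow(4)$. Using $(2)$, write $f=\ell+c$ with $c$ coercive; since $\ell$ can be reincorporated at the end at negligible cost (it is already real analytic), reduce to $f$ coercive. Exhaust $\R^d$ by the compact convex sublevel sets $K_n:=\{f\leq n\}$, and for each $n$ invoke Theorem~\ref{uniform approximation of convex by real analytic convex}, strengthened to produce \emph{strongly} convex approximants by means of Proposition~\ref{characterization of functions that cannot be approximated by strongly convex functions} (which applies precisely because $(3)$ holds), to obtain a strongly convex real analytic $h_n\colon\R^d\to\R$ with $f-\delta_n\leq h_n\leq f$ on all of $\R^d$, where $\delta_n$ is chosen small relative to $\min_{K_{n+1}}\varepsilon$. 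The fine approximant $g$ is then built from the $h_n$ by an analytic gluing in the spirit of Theorem~\ref{main theorem}, adapted to the $C^0$-fine setting: outside each $K_n$ the coercivity $f\geq n$ creates room to replace the uniform tolerance $\delta_n$ by a much smaller tail, allowing the $h_n$ to be combined (e.g.\ via an analytic smoothed-max of shifted functions $h_n-\rho_n$ with $\rho_n\searrow 0$, in which only finitely many terms are effective on each $K_m$) into a single real analytic strongly convex $g$ with $|g-f|\leq\varepsilon$ pointwise. The main obstacle is precisely to carry out this gluing so as to preserve real analyticity, strong convexity, and pointwise closeness simultaneously, and the coercivity supplied by $(2)$ is what makes all three demands compatible.
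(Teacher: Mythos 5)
Your outer structure is sound, and several pieces are correct: $(1)\Leftrightarrow(2)$, $(4)\Rightarrow(5)$, and your $(5)\Rightarrow(3)$ argument is essentially the paper's (even slightly cleaner: a convex function on $\R$ bounded above is constant, so the two-case analysis via the mean value theorem is unnecessary). Your $(2)\Leftrightarrow(3)$ via the recession function and the identification of the lineality space with $\ker P$, plus duality with $\operatorname{dom}(f^\ast)$, is a legitimate alternative to the paper's route, which instead gets $(3)\Rightarrow(2)$ from Lemma \ref{reduction to Rk with k less than n}: if $f$ is not of the form $c\circ P+\ell$ it is supported at $0$ by a $(d+1)$-dimensional corner $C$, and subtracting a suitable linear $\ell$ makes $C-\ell$, hence $f-\ell$, coercive. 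Both arguments work; yours trades the corner lemma for standard Rockafellar-type facts.

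The genuine gap is $(3)\Rightarrow(4)$, which is the heart of the theorem — the paper devotes a separate result (Theorem \ref{a sufficent condition for fine approximation property for convex functions on Rd}) and all of Section 6 to it — and your sketch does not work as stated, for two concrete reasons. First, there is no real analytic smoothed maximum with the locality you need: property (iii) of Lemma \ref{smooth maxima} (agreeing with $\max$ when $|x-y|\geq\varepsilon$) is incompatible with real analyticity, and it is exactly this locality that would make ``only finitely many terms effective on each $K_m$'' true; with an analytic surrogate such as $\widetilde{M}_\varepsilon$ every term influences every point. The paper resolves this by gluing with the non-analytic $C^\infty$ maximum to produce a $C^\infty$ \emph{strongly} convex fine approximation, and only at the very end recovers analyticity via Whitney's $C^2$-fine approximation theorem, which preserves both strong convexity and the pointwise bound $|g-f|\leq\varepsilon$. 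Second, vertical shifts $h_n-\rho_n$ with $\rho_n\searrow 0$ cannot produce the crossover needed for the gluing to stabilize: since every $h_n$ lies within $\delta_n$ of $f$ on all of $\R^d$, demanding that the old function dominate the new by a definite gap on $K_n$ (so that $g=g_n$ there, guaranteeing smoothness of the limit) forces $\rho_{n+1}>\rho_n$, while demanding that the new dominate the old by a gap outside a larger compact set forces $\rho_n>\rho_{n+1}$ — a contradiction; and replacing the smooth max by a pointwise supremum of the shifted functions just reproduces $f$ itself. The paper's substitute is the tilted functions $\widetilde{f}_n=(1-r_n)(f_n-\beta_n)+\beta_n$, which pivot about the level set $\{f=\beta_n\}$ and hence lie strictly above $f$ inside $B_n$ and strictly below it outside; properness/coercivity enters precisely through the uniform lower bound $\alpha>0$ on subgradient norms along the level sets, which yields the quantitative separations $s_n$ on the annuli $C_n\setminus A_n$ that make the $M_{\delta_n}$-gluing exact (equal to the previous $g_{n-1}$ on $A_{n-1}$, equal to the new $\varphi_n$ outside $C_{n-1}$). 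Until a construction with these features is actually carried out, the implication $(1)/(3)\Rightarrow(4)$ remains unproved in your proposal.
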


In the case when $U$ is a proper open convex subset of $\R^d$, $d\geq 2$, it
would be harder to establish a full characterization (in the spirit
of the preceding theorem) of the class of convex functions
$f:U\to\R$ which can be approximated by smoother convex functions.
We do not embark on such a program, but we do prove that every
properly convex function on $U$ can be approximated by convex real
analytic functions in the $C^0$-fine topology.

\begin{theorem}\label{a sufficent condition for fine approximation property for convex functions on Rd}
Let $U\subseteq\R^d$ be open and convex, and $f:U\to\R$ be properly convex. Then $f$ can be
approximated by strongly convex real analytic functions in the
$C^0$-fine topology.
\end{theorem}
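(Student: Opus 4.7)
I would adapt the proof of $(1)\Rightarrow(4)$ in Theorem \ref{characterization of the fine approximation property for convex functions on Rd} from $\R^d$ to a general open convex $U$, with the properness of $f$ providing the substitute for the growth of $c$ at infinity that was available in the case $U=\R^d$. Write $f=\ell+c$ with $\ell$ linear and $c:U\to[a,b)$ convex having compact sublevel sets in $U$; since $\ell$ is already real analytic and summing with a linear function preserves real analyticity and strong convexity, it suffices to construct a strongly convex real analytic $g_0$ on $U$ with $|g_0-c|<\varepsilon$, and then take $g=\ell+g_0$.

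\textbf{Construction.} Pick $t_n\nearrow b$ with $t_0=a$ and set $K_n=\{c\le t_n\}$, a nested compact exhaustion of $U$. Put $\varepsilon_n=\min_{K_{n+1}}\varepsilon>0$ (non-increasing) and choose rapidly decreasing $\delta_n\le \varepsilon_n/2^{n+2}$. Apply Theorem \ref{uniform approximation of convex by real analytic convex} to obtain, for each $n$, a real analytic convex $h_n:U\to\R$ satisfying $c-\delta_n\le h_n\le c$ on $U$. Next, assemble the $h_n$'s into a single real analytic convex $g_0$ by a series/infimal-convolution scheme adapted to the exhaustion $(K_n)$, arranged so that on the shell $K_{n+1}\setminus K_n$ the approximation is effectively controlled by $h_n$, giving a global bound $|g_0-c|\le\varepsilon/2$. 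Finally add a small real analytic strongly convex correction $\sigma:U\to\R$ with $|\sigma|<\varepsilon/2$---for instance, a tiny multiple of a strongly convex real analytic profile $p\circ h_1$, the profile $p$ being selected in view of the $K_n$ so that $|p(h_1(x))|$ stays within the prescribed shell-dependent budget. The resulting $g=\ell+g_0+\sigma$ is strongly convex, real analytic, and satisfies $|g-f|<\varepsilon$ on $U$.

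\textbf{Main obstacle.} The technical heart is the fine analytic gluing in the middle step. Unlike in the $C^\infty$ case, real analytic partitions of unity are unavailable, so one cannot glue the $h_n$'s by bump functions; and naive global constructions (convex combinations, log-sum-exp) yield only uniform error bounds, insufficient when $\varepsilon$ may decay near $\partial U$. The design must exploit the fact that every compact subset of $U$ is contained in some $K_N$, so that only finitely many terms of the construction contribute substantially on that set, while the rapid decay of $\delta_n$ ensures global convergence and preservation of convexity and analyticity. A secondary difficulty is the construction of a globally small real analytic strongly convex perturbation $\sigma$ on an arbitrary open convex $U$: because strongly convex real analytic functions tend to be unbounded, $\sigma$ has to be shaped relative to the geometry induced by the proper exhaustion $(K_n)$, and it is precisely the properness of $c$ that makes such a $\sigma$ available in the first place.
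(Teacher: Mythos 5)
Your plan has two genuine gaps, and the second one is not just a missing detail but an impossibility. First, the step you yourself call the ``technical heart'' --- assembling the globally uniform analytic approximants $h_n$ into a single real analytic convex $g_0$ with shell-dependent error --- is exactly the content of the theorem, and no mechanism is given; ``a series/infimal-convolution scheme'' does not exist off the shelf (infimal convolution and convex series do not give shell-by-shell control, as you note, and they do not obviously preserve analyticity either). The paper's proof is structurally different at this point: it never glues real analytic functions at all. It works with Lipschitz extensions $f_n$ of $f$ from the sublevel sets $B_{n+2}=f^{-1}[a,\beta_{n+2})$, tilts them to $\widetilde f_n=(1-r_n)(f_n-\beta_n)+\beta_n$ so that on prescribed annular regions one piece strictly dominates the next by a margin $s_n$, uses the fact that $f_n\to\infty$ at infinity (which follows from the uniform lower bound $\alpha>0$ on subgradient norms on $\partial B_1$) to conclude via Lemma \ref{reduction to Rk with k less than n} that each $\widetilde f_n$ is supported by a $(d+1)$-dimensional corner at every point, hence by the proof of Theorem \ref{uniform approximation of convex by real analytic convex} admits $C^\infty$ \emph{strongly} convex approximations from below, and then glues these with the smooth maxima $M_\delta$ of Lemma \ref{smooth maxima} (which destroy analyticity but preserve smoothness and strong convexity). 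Real analyticity is recovered only at the very end, by Whitney's $C^2$-fine approximation theorem, the point being that strong convexity of the $C^\infty$ approximant leaves room for a $C^2$-small analytic perturbation to remain (strongly) convex.

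Second, your closing step --- adding a real analytic strongly convex $\sigma$ with $|\sigma|<\varepsilon/2$ --- cannot work in general, and properness of $c$ does not rescue it. If $U=\R^d$ (or $U$ contains a line) and $\varepsilon$ is bounded, then $\sigma$ restricted to a line would be a bounded convex function on $\R$, hence constant, contradicting strong convexity. Even for bounded $U$, choose $\varepsilon$ decaying superlinearly near $\partial U$: along any chord of $U$, the bound $|\sigma|\le\varepsilon/2$ forces $\sigma\to 0$ at both endpoints, hence $\sigma\le 0$ on the chord by convexity; then $-\sigma\ge 0$ is concave along the chord, and concavity gives a \emph{linear} lower bound on its decay at an endpoint unless $\sigma\equiv 0$ on the chord --- contradicting either the superlinear bound or strong convexity. (Also, your candidate $p\circ h_1$ need not be strongly convex, since $h_1$ may have degenerate Hessian in some directions.) The moral is that strong convexity cannot be achieved by a small additive perturbation of a merely convex fine approximant: it must be built into the approximants of $f$ itself, relative to $f$, which is precisely why the paper routes the construction through corner functions, Lemma \ref{strongly convex approximation of corners}, and strong convexity of every glued piece, and obtains analyticity by Whitney's theorem rather than by an additive correction.
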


When $f\in C^{1}$, we will show a slightly weaker result (but still powerful enough to imply quite interesting geometrical corollaries):
we are able to approximate any $C^{1}$ properly convex function by
$C^{\infty}$ convex functions in the $C^{1}$-fine topology.

\begin{theorem}\label{a sufficent condition for C1 fine approximation property for convex functions on Rd}
Let $U\subseteq\R^d$ be open and convex, and $f:U\to\R$ be properly
convex and $C^1$. Then $f$ can be approximated by $C^{\infty}$ convex
functions in the $C^1$-fine topology.
\end{theorem}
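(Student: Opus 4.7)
The strategy is to reduce the $C^1$-fine statement to the already-established $C^0$-fine version (Theorem~\ref{a sufficent condition for fine approximation property for convex functions on Rd}) via a gradient-comparison lemma that upgrades $C^0$ closeness to $C^1$ closeness under the hypotheses of convexity and $C^1$ regularity of $f$. The central estimate is: if $f, g : U \to \R$ are convex with $f \in C^1$ and $g$ differentiable at $x \in U$, and if $|f - g| \leq \epsilon$ throughout a ball $B(x, \delta) \subset U$, then
\[
\|\nabla g(x) - \nabla f(x)\| \leq \omega(\delta) + \frac{2\epsilon}{\delta},
\]
where $\omega$ denotes any modulus of continuity of $\nabla f$ on $B(x,\delta)$. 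This follows directly by combining, for each unit vector $v$, the subgradient inequalities $g(x \pm \delta v) \geq g(x) \pm \delta\,\nabla g(x) \cdot v$ with the $C^1$-expansion $|f(x \pm \delta v) - f(x) \mp \delta\,\nabla f(x) \cdot v| \leq \delta\, \omega(\delta)$ together with the hypothesis $|f - g| \leq \epsilon$.

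To carry out the reduction, write $f = \ell + c$ with $c$ proper and $\ell$ smooth. Since adding $\ell$ back to any $C^\infty$ convex approximation of $c$ preserves convexity and the $C^1$-error, it suffices to treat the case $\ell = 0$, where $f$ itself is $C^1$, convex, and proper. Exhaust $U$ by compact sublevel sets $K_n = \{f \leq \beta_n\}$ with $K_n \subset K_{n+1}^{\circ}$. On each compact $K_{n+2}$, $\nabla f$ admits a modulus of continuity $\omega_n$. Pick $\delta_n > 0$ with $\omega_n(\delta_n) < \tfrac14 \inf_{K_{n+2}} \varepsilon$, $\delta_n \leq 4$, and $\delta_n < \tfrac12 \min\{\textrm{dist}(K_{n-1}, \partial K_n), \textrm{dist}(K_{n+1}, \partial K_{n+2})\}$, and set $\alpha_n := \tfrac14 \delta_n \inf_{K_{n+2}} \varepsilon$. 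A routine interpolation on the shells $K_{n+1} \setminus K_n^{\circ}$ produces a continuous positive $\tilde\varepsilon : U \to (0, \infty)$ with $\tilde\varepsilon \leq \varepsilon$ on $U$ and satisfying $\tilde\varepsilon(y) \leq \alpha_n$ for every $y$ in the enlarged shell $K_{n+2} \setminus K_{n-1}^{\circ}$.

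Applying Theorem~\ref{a sufficent condition for fine approximation property for convex functions on Rd} to $f$ with tolerance $\tilde\varepsilon$ yields a real-analytic (hence $C^\infty$) strongly convex $g : U \to \R$ with $|f - g| \leq \tilde\varepsilon \leq \varepsilon$ on $U$. For each $x \in K_{n+1} \setminus K_n^{\circ}$, the choice of $\delta_n$ guarantees $B(x, \delta_n) \subset K_{n+2} \setminus K_{n-1}^{\circ}$, so $|f - g| \leq \alpha_n$ on that ball, and the gradient-comparison lemma gives
\[
\|\nabla g(x) - \nabla f(x)\| \leq \omega_n(\delta_n) + \frac{2\alpha_n}{\delta_n} \leq \tfrac34 \inf_{K_{n+2}} \varepsilon \leq \varepsilon(x),
\]
establishing the desired $C^1$-fine approximation. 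The conceptual core of the proof is recognising the gradient-comparison mechanism; once it is in hand, only routine bookkeeping remains to tune the parameters $\delta_n$ and build the continuous tolerance $\tilde\varepsilon$.
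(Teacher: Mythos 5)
Your proof is correct, but it follows a genuinely different route from the paper's. The paper proves the theorem by re-running the inductive gluing construction used for the $C^0$-fine result, this time carrying first-order estimates: it takes $f_n=f$ (no Lipschitz extension), produces the local approximations $\varphi_n$ by mollification (so that $D\varphi_n$ also approximates $D\widetilde{f}_n$ on compacta, which is where $f\in C^1$ enters), and controls the derivative of each glued function via the new estimate $\|DM_{\varepsilon}(\varphi,\psi)-\tfrac{1}{2}(D\varphi+D\psi)\|\leq\tfrac{1}{2}\|D\varphi-D\psi\|$ for the smooth maximum of Lemma \ref{smooth maxima}. You instead use Theorem \ref{a sufficent condition for fine approximation property for convex functions on Rd} as a black box and upgrade $C^0$-closeness to $C^1$-closeness by the quantitative convexity inequality $\|\nabla g(x)-\nabla f(x)\|\leq\omega(\delta)+2\epsilon/\delta$, valid whenever $g$ is convex and differentiable at $x$, $f$ is $C^1$ with modulus $\omega$ on $\overline{B}(x,\delta)\subset U$, and $|f-g|\leq\epsilon$ there; this is the classical fact (essentially Rockafellar's theorem on gradient convergence of convex functions) made quantitative, and your verification of it is correct, as is the shell-by-shell choice of $\delta_n$, $\alpha_n$ and the lower semicontinuous-to-continuous interpolation giving $\tilde\varepsilon$ (the only loose ends — the $n=1$/$K_0$ indexing and the explicit construction of $\tilde\varepsilon$ — are genuinely routine). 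What each approach buys: your argument is shorter and actually yields a formally stronger conclusion on $\R^d$, namely $C^1$-fine approximation by \emph{real analytic, strongly convex} functions, which the paper only obtains conditionally in the Remark following its proof; moreover it automatically transfers $C^1$-fine approximation to any setting where a $C^0$-fine theorem is available and the gradient inequality makes sense. The paper's heavier bookkeeping, on the other hand, is self-contained at the level of the gluing technique, avoids re-invoking the corner/real-analytic machinery of Theorem \ref{uniform approximation of convex by real analytic convex}, and is designed to generalize (as the Remark indicates) to situations, e.g.\ Riemannian manifolds, where one only has local smoothing operators that approximate the function and its derivative uniformly on compact sets rather than a global $C^0$-fine theorem with real analytic conclusions.
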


We will also show (see Example \ref{counterexample to properly
convex approximation} below) that on $(-1,1)\times
(-1,1)\subset\R^2$ there exists a $C^p$, but not $C^{p+1}$, convex
function $f$ which is affine exactly on a very thin neighbourhood of a line, which is strongly
convex outside a very small neighbourhood of this line, and which cannot
be approximated by $C^{p+1}$ convex functions in the $C^0$-fine
topology. Hence, even in the case $U\neq\R^d$, proper convexity
is a very reasonable condition to require of a nonsmooth convex function, if
one wants to approximate it by smooth convex functions.

As a first geometrical application of Theorem \ref{a sufficent condition
for fine approximation property for convex functions on Rd}, we will
show that one can sometimes prescribe subdifferential data to real
analytic convex functions at the boundary of a compact convex body.

\begin{corollary}\label{prescribing continuous boundary values and subdifferentials of real analytic convex functions}
Let $U\subseteq\R^d$ be open and convex, $f:U\to\R$ be a convex function of the form $f=\ell +c$, where $\ell$ is linear and $c$ is proper, $K$ a compact convex body of
the form $K=c^{-1}(-\infty, b]$, and $\varepsilon:\rm{int}(K)\to (0,
\infty)$ a continuous function. Then there exists a convex function
$F:U\to\R$ such that
\begin{enumerate}
\item $F=f$ on $U\setminus \textrm{int}(K)$
\item $|F-f|\leq\varepsilon$ on $\textrm{int}(K)$
\item $F$ is strongly convex and real analytic on $\textrm{int}(K)$
\item $\partial F(x)=\partial f(x)$ for each $x\in\partial K$.
\end{enumerate}
Moreover, if $f\in C^{1}(U\setminus\textrm{int}(K))$ then $F\in C^{1}(U)$.
\end{corollary}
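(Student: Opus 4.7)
The plan is to construct $F$ by performing strongly convex real-analytic surgery inside $K$ and gluing the result to $f$ along $\partial K$ via a subdifferential-envelope trick. I first introduce the \emph{boundary envelope}
\[
\phi(x)\;=\;\sup\bigl\{f(y)+p\cdot(x-y)\;:\;y\in U\setminus\mathrm{int}(K),\ p\in\partial f(y)\bigr\}.
\]
As a supremum of affine minorants of $f$, $\phi$ is convex on $U$ with $\phi\le f$; taking $y=x$ and $p\in\partial f(x)$ shows $\phi=f$ on $U\setminus\mathrm{int}(K)$, and a standard convex-analysis computation yields $\partial\phi(z)=\partial f(z)$ for every $z\in\partial K$.

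Next I apply Theorem~\ref{a sufficent condition for fine approximation property for convex functions on Rd} to $f|_{\mathrm{int}(K)}$, which is properly convex because $c$ proper on $U$ implies that $c|_{\mathrm{int}(K)}\colon\mathrm{int}(K)\to[a,b)$ has compact sublevels $c^{-1}[a,\beta]$ for every $\beta<b$. I choose a continuous $\widetilde\varepsilon\colon\mathrm{int}(K)\to(0,\infty)$ with $\widetilde\varepsilon\le\tfrac12\min\{\varepsilon,\,f-\phi\}$ on the set $\{f>\phi\}$ and $\widetilde\varepsilon(x)\to 0$ as $x\to\partial K$, and obtain a strongly convex real-analytic $g$ on $\mathrm{int}(K)$ with $|g-f|\le\widetilde\varepsilon$. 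Using the one-sided form of the theorem (approximation from below, as in Theorem~\ref{uniform approximation of convex by real analytic convex}), I arrange $g\le f$, and then $g\ge f-\widetilde\varepsilon\ge\tfrac12(f+\phi)\ge\phi$. Setting
\[
F(x)\;=\;\begin{cases} g(x), & x\in\mathrm{int}(K),\\ f(x), & x\in U\setminus\mathrm{int}(K),\end{cases}
\]
gives a continuous function on $U$, since $\widetilde\varepsilon\to 0$ at $\partial K$ forces $g\to f$ there, and (1)--(3) follow directly.

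To verify (4) and global convexity, fix $z\in\partial K$ and $p\in\partial f(z)$. The affine map $\ell_{z,p}(x)=f(z)+p\cdot(x-z)$ enters the supremum defining $\phi$, so $\ell_{z,p}\le\phi\le g=F$ on $\mathrm{int}(K)$ and $\ell_{z,p}\le f=F$ on the complement; hence $\ell_{z,p}\le F$ on $U$ with equality at $z$, giving $p\in\partial F(z)$ and $\partial F(z)\supseteq\partial f(z)$. Conversely, any $p\in\partial F(z)$ satisfies $\ell_{z,p}\le F\le f$ on $U$ (because $g\le f$ on $\mathrm{int}(K)$ and $F=f$ outside), so $p\in\partial f(z)$, which proves (4). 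The existence of such a supporting hyperplane at every point of $\partial K$, combined with smoothness of $g$ inside and convexity of $f$ outside, shows that $F$ is the supremum of its affine minorants on $U$, hence convex. For the final assertion, $f\in C^{1}(U\setminus\mathrm{int}(K))$ makes $\partial f(z)=\{\nabla f(z)\}$ on $\partial K$, so $\partial F(z)=\{\nabla f(z)\}$ and $F$ is differentiable at $z$; strong convexity of $g$ together with the identification of $\partial F(z)$ forces $\nabla g(x_n)\to\nabla f(z)$ along any sequence $x_n\to z$ in $\mathrm{int}(K)$, giving $F\in C^{1}(U)$.

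The main obstacle is to maintain $g\ge\phi$ together with $g\le f$ on the exceptional set $\{f=\phi\}\cap\mathrm{int}(K)$, which is nonempty exactly when $f$ is affine along a segment that meets $\partial K$ (as for $f=|\cdot|$ on $\mathbb{R}$ with $K=[-1,1]$, where $\phi=f$ on all of $\mathrm{int}(K)$). On such exceptional regions the same scheme has to be run by approximating $\phi$ from \emph{above} instead, which is feasible because $\phi$ is itself properly convex on $U$ (bounded below by an affine function and dominated by the proper $f$); the resulting $F$ slightly exceeds $f$ on the affine face but still satisfies $|F-f|\le\varepsilon$, and the two regimes are glued together using Theorem~\ref{main theorem} applied to a partition of $\mathrm{int}(K)$ by sublevel sets of $f-\phi$.
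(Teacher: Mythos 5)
Your construction in the ``generic'' regime is a genuinely different mechanism from the paper's and, there, it is essentially sound: sandwiching $\phi\le g\le f$ with the outer tangent envelope $\phi$ (for which indeed $\phi=f$ off $\mathrm{int}(K)$ and $\partial\phi(z)=\partial f(z)$ on $\partial K$) does force convexity of the glued function and $\partial F(z)=\partial f(z)$. But as written it has two genuine gaps. First, you invoke a ``one-sided form'' of Theorem \ref{a sufficent condition for fine approximation property for convex functions on Rd} giving $g\le f$ together with a \emph{variable} error $\widetilde\varepsilon$: no such statement is available in the paper (Theorem \ref{uniform approximation of convex by real analytic convex} is one-sided but with constant $\varepsilon$, Theorem \ref{a sufficent condition for fine approximation property for convex functions on Rd} is two-sided), and you cannot fix it by subtracting $\widetilde\varepsilon$ from $f$, which destroys convexity; your proof of $\partial F(z)\subseteq\partial f(z)$ leans on $F\le f$, so this is not cosmetic.

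Second, and more seriously, the whole scheme requires $\widetilde\varepsilon\le\frac12(f-\phi)>0$, which is impossible on the exceptional set $\{f=\phi\}\cap\mathrm{int}(K)$. This set is nonempty in very ordinary situations (any $f$ affine along a segment reaching $\partial K$, e.g.\ corner-type functions, which are central to this paper), so it cannot be dismissed, and your sketched repair does not work as stated: $f-\phi$ is a difference of convex functions, so its sublevel sets need not be convex; Theorem \ref{main theorem} glues approximations from below along a convex exhaustion and is not a tool for mixing ``above'' and ``below'' regimes on a partition; and once $g$ is allowed to exceed $f$ near $\partial K$, both convexity of the glued $F$ at the seam and $\partial F(z)\subseteq\partial f(z)$ fail in general unless the error $g-f$ vanishes to \emph{first order} at $\partial K$ --- which is precisely the issue your outline never addresses. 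The paper sidesteps all of this with one device: it applies Theorem \ref{a sufficent condition for fine approximation property for convex functions on Rd} on $\mathrm{int}(K)$ with error bounded by a $C^1$ function $\eta\ge0$ satisfying $\eta^{-1}(0)=\R^d\setminus\mathrm{int}(K)$ (so $\eta$ and $D\eta$ vanish on $\partial K$), and then verifies convexity at the seam and $\partial F=\partial f$ by comparing one-sided derivatives along lines, using $\lim_{t\to0^+}\eta(x+tv)/t=0$; no envelope $\phi$, no approximation from below, and no case distinction are needed. To salvage your approach you would have to build this first-order boundary matching into the approximation (e.g.\ require $|g-f|\le\min\{\varepsilon,\eta\}$ with such an $\eta$), at which point you have reproduced the paper's proof.
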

\noindent As is usual, we denote $\partial F(x)=\{\zeta:\R^{n}\to\R
\,\, | \,\, \zeta \textrm{ is linear }, F(y)-F(x)\geq \zeta(x-y)
\textrm{ for all } y\in U\}$, the subdifferential of $F$.

In the case when the given function $f$ is already $C^2$ outside $\textrm{int}(K)$, we will also
show the following.

\begin{corollary}\label{prescribing differentials of smooth convex functions}
Let $U\subseteq\R^d$ be open and convex, $f:U\to\R$ be a convex function of the form $f=\ell +c$, where $\ell$ is linear and $c$ is proper, $K$ a compact convex body of the form $K=c^{-1}(-\infty,
b]$, and $\varepsilon:\rm{int}(K)\to (0, \infty)$ a continuous
function. Assume that $f$ is $C^{2}$ on $U\setminus\textrm{int}(K)$.
Then there exists a $C^2$ convex function $F:U\to\R$ such that
\begin{enumerate}
\item $F=f$ on $U\setminus \textrm{int}(K)$
\item $|F-f|\leq\varepsilon$ on $\textrm{int}(K)$
\item $F$ is $C^{\infty}$ on $\textrm{int}(K)$.
\end{enumerate}
\end{corollary}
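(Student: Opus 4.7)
The plan is to refine the construction underlying Corollary \ref{prescribing continuous boundary values and subdifferentials of real analytic convex functions}, promoting the $C^{1}$-matching across $\partial K$ to $C^{2}$-matching by using the extra second-order regularity of $f$ at $\partial K$. As a preliminary observation, since $c$ is $C^{2}$ on a neighbourhood of $U\setminus\operatorname{int}(K)$ and $c$ is not minimised on $\partial K$ (the minimum of $c$ lies strictly inside $K$ as $\operatorname{int}(K)\neq\emptyset$), we have $\nabla c\neq 0$ on $\partial K$, so $\partial K=c^{-1}(b)$ is a $C^{2}$ hypersurface. Also, from Theorem \ref{uniform approximation of convex by real analytic convex} (or the strongly-convex version built into the proof of Theorem \ref{a sufficent condition for fine approximation property for convex functions on Rd}) we obtain a real-analytic strongly convex function $g:U\to\mathbb{R}$ with $g\leq f-\sigma$ on $U$ for any prescribed small continuous $\sigma>0$.

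I would first construct a $C^{\infty}$ function $\hat{f}$ on $\operatorname{int}(K)$ whose limit $2$-jet on $\partial K$ coincides with the one carried by $f$ from the outside. Working in a tubular neighbourhood of $\partial K$ with inward normal coordinate $t\in[0,\delta)$ and transverse coordinate $y\in\partial K$, I would take the second-order Taylor polynomial $P(y,t)=f(y)+t\,\partial_{\nu}f(y)+\tfrac{1}{2}t^{2}\partial_{\nu}^{2}f(y)$ (whose coefficients in $y$ are only $C^{2},C^{1},C^{0}$), approximate the coefficients by $C^{\infty}$ functions on $\partial K$, correct the small boundary error by a $C^{\infty}$ function vanishing to order two on $\partial K$, and glue this to an arbitrary $C^{\infty}$ extension deep inside $K$. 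Since $f$ is convex we have $D^{2}f|_{\partial K}\succeq 0$, so by continuity $D^{2}\hat{f}\succeq -\eta I$ on a thin collar $C_{\delta}\subset\operatorname{int}(K)$ with $\eta(\delta)\to 0$. To restore genuine convexity there without altering the prescribed $2$-jet, I would add the corrector $\mu\,\psi(b-c)$ for small $\mu>0$, where $\psi\in C^{\infty}([0,\infty))$ satisfies $\psi(0)=\psi'(0)=\psi''(0)=0$ and $\psi,\psi',\psi''\geq 0$; a direct computation gives
\[
D^{2}\bigl(\mu\psi(b-c)\bigr)=\mu\psi''(b-c)\,\nabla c\otimes\nabla c+\mu\psi'(b-c)\,D^{2}c\succeq 0,
\]
and since $\psi,\psi',\psi''$ all vanish on $\partial K$, the $2$-jet of $\hat{f}$ on $\partial K$ is preserved while $D^{2}\hat{f}\succeq 0$ just inside.

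Then I would define
\[
F(x)=\begin{cases} f(x), & x\in U\setminus\operatorname{int}(K),\\ M_{\rho}\!\bigl(\hat{f}(x),\,g(x)+\lambda\bigr), & x\in\operatorname{int}(K),\end{cases}
\]
where $M_{\rho}$ is the standard $C^{\infty}$-regularised maximum with continuous parameter $\rho:\operatorname{int}(K)\to(0,\infty)$ and $\lambda<0$ is small. Tune $\lambda,\rho,\mu,\delta$ so that: (i) $\hat{f}>g+\lambda+\rho$ on a neighbourhood of $\partial K$ inside $\operatorname{int}(K)$, making $F=\hat{f}$ there and yielding the $C^{2}$ match across $\partial K$; (ii) $g+\lambda>\hat{f}+\rho$ on the deep interior of $K$, so that $F=g+\lambda$ is $C^{\infty}$ and strongly convex there; (iii) the transition region where $M_{\rho}$ genuinely averages is compactly contained in $\operatorname{int}(K)$ inside the subset where $\hat{f}$ is convex and $C^{\infty}$, so $F\in C^{\infty}(\operatorname{int}(K))$ and inherits convexity from the two convex arguments of the smoothed maximum.

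The main obstacle is simultaneously securing (a) the $C^{2}$-matching on $\partial K$, which constrains the $2$-jet of $\hat{f}$ there and thus forbids strict convexity if $D^{2}f|_{\partial K}$ is merely positive semi-definite, and (b) global convexity of $F$. Point (a) is handled by the corrector $\mu\psi(b-c)$ above, which restores (possibly non-strict) convexity of $\hat{f}$ on an inside collar without disturbing its prescribed $2$-jet; point (b) follows because the regularised maximum of two convex $C^{\infty}$ functions is convex, and the strong convexity of $g$—which may be arranged with $D^{2}g\succeq\kappa I$ for any $\kappa>0$ on a bounded set—absorbs any residual slack in the transition. Finally, the estimate $|F-f|\leq\varepsilon$ on $\operatorname{int}(K)$ follows once $\sigma$, $|\lambda|$, and $\mu$ are chosen pointwise smaller than $\varepsilon/3$.
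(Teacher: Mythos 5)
The paper itself gives no proof of this corollary (it is declared ``easier'' than Corollary \ref{prescribing continuous boundary values and subdifferentials of real analytic convex functions} and left to the reader), so the benchmark is the natural adaptation of that gluing argument; your route --- prescribe the boundary $2$-jet on an interior collar, convexify with a corrector that is flat on $\partial K$, then pass to a smoothed maximum with a strongly convex minorant --- is different, and its central step fails. The chain rule gives $D^{2}\bigl(\mu\psi(b-c)\bigr)=\mu\psi''(b-c)\,\nabla c\otimes\nabla c-\mu\psi'(b-c)\,D^{2}c$: the $D^{2}c$ term carries a minus sign (you are composing a nondecreasing convex $\psi$ with the \emph{concave} function $b-c$), so your corrector is not convex; worse, for $v$ tangent to the level set of $c$ its second derivative in the direction $v$ equals $-\mu\psi'(b-c)\langle D^{2}c\,v,v\rangle\leq 0$, so it contributes nothing positive precisely where it must absorb the tangential part of the deficit $D^{2}\hat f\succeq-\eta I$. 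Moreover the strategy cannot be repaired by a better choice of corrector: if $h$ is convex on $K$, is differentiable on $\partial K$, and has $h=0$, $Dh=0$ on all of $\partial K$, then for every $x\in K$ the support inequality at any $x_{0}\in\partial K$ gives $h(x)\geq 0$, while a chord through $x$ with endpoints on $\partial K$ gives $h(x)\leq 0$; hence $h\equiv 0$ on $K$. So ``add something flat on $\partial K$ to restore convexity'' is impossible in principle: the interior function has to be built convex from the start, with Hessian tending to the exact (possibly degenerate) $D^{2}f$ on $\partial K$, and this --- the genuinely delicate point of the corollary --- is exactly what your argument leaves unresolved.

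The first stage also does not deliver what you need. Once the coefficients $f$, $\partial_{\nu}f$, $\partial_{\nu}^{2}f$ on $\partial K$ are replaced by smooth approximations, the boundary $2$-jet of $\hat f$ differs from that of $f$ already at orders zero and one, and no correction ``vanishing to order two on $\partial K$'' can repair that; if instead you keep the exact coefficients (only $C^{2}$, $C^{1}$, $C^{0}$ in $y$), then $\hat f$ is not $C^{\infty}$ in the collar and conclusion (3) is lost. What this step really requires is a Whitney-type extension of the $C^{2}$ jet of $f$ from $U\setminus\textrm{int}(K)$ that is $C^{\infty}$ off that set (Whitney's theorem, already invoked in the paper, provides it), after which convexity near $\partial K$ is again the open issue. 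Two smaller points: convexity of the glued $F$ across $\partial K$ still needs the one-dimensional seam argument of Corollary \ref{prescribing continuous boundary values and subdifferentials of real analytic convex functions} (you only address convexity inside $\textrm{int}(K)$); and a smoothed maximum $M_{\rho}$ with variable parameter $\rho(x)$ is not covered by Lemma \ref{smooth maxima} and Proposition \ref{properties of M(f,g)}, so take $\rho$ constant on the compact transition region. The route the paper implicitly intends is rather to repeat the gluing of Corollary \ref{prescribing continuous boundary values and subdifferentials of real analytic convex functions} with a gauge vanishing to second order on $\partial K$, controlling derivatives at the seam in the spirit of Theorem \ref{a sufficent condition for C1 fine approximation property for convex functions on Rd}, instead of prescribing jets and convexifying afterwards.
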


These corollaries are in the spirit of Ghomi's work on optimal smoothing of convex functions \cite{Ghomi1}, but note
that here we do not require strong convexity of $f$ on any neighbourhood of $\partial K$.

The above corollaries are also useful in smooth surgery of convex
bodies, e.g. as in the following situation: one has a convex body
with a relatively small part that one does not like (for instance
because it is not sufficiently smooth or convex). Assuming that the
part is a intersection of the given body with a half-space, then one
can replace that part with another piece which approximates the
given part, and which has a smooth boundary, with no loss of first or second order differential
information at the seam.
\begin{corollary}
Let $C$ be a compact convex body in $\R^d$, and let $K$ be a convex
body of the form $K=\ell^{-1}(-\infty,b]\cap C$, where $\ell$ is a
linear function on $\R^d$. Let $P$ be the orthogonal projection of
$\R^d$ onto the subspace $\textrm{Ker} \, \ell$, and assume that
$P(K)$ is contained in the interior of $P(C)$, and that $\partial
C\setminus\textrm{int}(K)$ is contained in a $C^{p}$ convex hypersurface, $p=1,2$. Then,
for every number $\varepsilon>0$ there exists a compact convex body
$D$ such that:
\begin{enumerate}
\item $\partial D$ is a compact $C^p$ convex hypersurface;
\item $C\setminus K=D\setminus K$;
\item $\partial D\cap\ell^{-1}(-\infty, b)$ is a $C^{\infty}$
convex hypersurface (or even real analytic and strongly convex in the case $p=1$);
\item $\rm{dist}\left(\partial C\cap\ell^{-1}(-\infty,b], \, \partial
D\cap \ell^{-1}(-\infty,b]\right)\leq\varepsilon$.
\end{enumerate}
\end{corollary}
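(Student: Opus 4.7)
I would begin by reducing the surgery to smoothing the lower boundary function of $C$ over the hyperplane $\textrm{Ker}\,\ell$, and then invoke Corollary~\ref{prescribing continuous boundary values and subdifferentials of real analytic convex functions} in case $p=1$, or Corollary~\ref{prescribing differentials of smooth convex functions} in case $p=2$.

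Choose coordinates so that $\ell(x)=x_d$, identify $\R^d=\R^{d-1}\times\R$, and let $\pi:\R^d\to\R^{d-1}$ be the projection onto the first $d-1$ coordinates. Then $C=\{(y,t): y\in\pi(C),\ \varphi(y)\le t\le\psi(y)\}$ for a convex $\varphi$ and a concave $\psi$ on $\pi(C)$. The hypothesis $P(K)\subset\textrm{int}(P(C))$ says that $\pi(K)=\{\varphi\le b\}$ is compact and strictly contained in $\textrm{int}(\pi(C))$, while the $C^p$-regularity hypothesis on $\partial C\setminus\textrm{int}(K)$ makes $\varphi$ and $\psi$ of class $C^p$ in appropriate regions (in any case on a neighborhood of $\partial\pi(K)$). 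Thus the surgery reduces to replacing $\varphi$ on $\textrm{int}(\pi(K))$ by a smoother convex function that matches $\varphi$ (and in case $p=2$ matches $D\varphi$ and $D^{2}\varphi$ as well) on $\partial\pi(K)=\{\varphi=b\}$.

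Pick $\eta>0$ small enough that $U:=\{y\in\textrm{int}(\pi(C)): \varphi(y)<b+\eta\}$ has $\overline U\subset\textrm{int}(\pi(C))$. Then $U$ is open, bounded, convex, contains $\pi(K)$, and $\varphi|_U$ is properly convex in the sense of the paper (taking $\ell=0$, $c=\varphi$): for $\beta<b+\eta$ the set $\varphi^{-1}(-\infty,\beta]$ is closed in $U$ and contained in a compact subset of $\textrm{int}(\pi(C))$, hence compact in $U$, while $\pi(K)=\varphi^{-1}(-\infty,b]$ has precisely the form required. Apply Corollary~\ref{prescribing continuous boundary values and subdifferentials of real analytic convex functions} (resp.\ Corollary~\ref{prescribing differentials of smooth convex functions}) with $f=\varphi$, the role of $K$ played by $\pi(K)$, and a continuous tolerance $\varepsilon_1<\varepsilon$. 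This yields a convex $\tilde\varphi:U\to\R$ equal to $\varphi$ outside $\textrm{int}(\pi(K))$, of class $C^p(U)$, and real analytic strongly convex (resp.\ $C^{\infty}$) on $\textrm{int}(\pi(K))$, with $|\tilde\varphi-\varphi|\le\varepsilon_1$. Extend $\tilde\varphi$ to $\pi(C)$ by $\varphi$ outside $U$; the extension is well defined (the two formulas agree on a neighbourhood of $\partial U$) and globally convex. Now define
\[
D:=\{(y,t):y\in\pi(C),\ \tilde\varphi(y)\le t\le\psi(y)\}.
\]
Then $D$ is a compact convex body, and the maximum principle for convex functions (with boundary value $b$ on $\partial\pi(K)$) forces $\tilde\varphi\le b$ on $\pi(K)$. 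Hence $\partial D$ coincides with $\partial C$ in the half-space $\{x_d\ge b\}$, while $\partial D\cap\{x_d<b\}$ is the real-analytic (or $C^{\infty}$) graph of $\tilde\varphi$ over $\textrm{int}(\pi(K))$; conclusions (1), (3) and (4) then follow, with Hausdorff distance bounded by $\|\tilde\varphi-\varphi\|_\infty\le\varepsilon$.

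The main obstacle is conclusion (2): the equality $C\setminus K=D\setminus K$ is equivalent to $D\subseteq C$, which forces $\tilde\varphi\ge\varphi$ on $\pi(K)$, whereas the two corollaries only give the two-sided estimate $|\tilde\varphi-\varphi|\le\varepsilon_1$. However, convolution of a convex function with a positive unit-mass kernel is always an \emph{upper} approximation (by Jensen's inequality), so the smoothing procedures underlying Theorems~\ref{a sufficent condition for fine approximation property for convex functions on Rd} and~\ref{a sufficent condition for C1 fine approximation property for convex functions on Rd} can be arranged to produce $\tilde\varphi\ge\varphi$ while still matching the prescribed boundary data on $\partial\pi(K)$. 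Verifying this one-sided control, without destroying either the smoothness in the interior or the matching at the seam, is the delicate technical point on which the whole argument hinges.
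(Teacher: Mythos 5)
Your reduction to the lower boundary function $\varphi$ over $\textrm{Ker}\,\ell$ and the appeal to Corollary \ref{prescribing continuous boundary values and subdifferentials of real analytic convex functions} (for $p=1$) and Corollary \ref{prescribing differentials of smooth convex functions} (for $p=2$) is exactly the derivation the paper intends, and the parts you verify are sound: proper convexity of $\varphi$ on a sublevel neighbourhood of $\pi(K)$, the identification $\pi(K)=\{\varphi\le b\}$, the maximum principle giving $\tilde\varphi\le b$ on $\pi(K)$, and the seam regularity coming from the matching of (sub)differentials at $\partial\pi(K)$. But the proof is not complete, and the missing piece is precisely the one you flag and then leave open: as you observe, conclusion (2) literally forces $D\subseteq C$, i.e.\ $\tilde\varphi\ge\varphi$ on $\textrm{int}\,\pi(K)$, while both corollaries only give the two-sided bound $|\tilde\varphi-\varphi|\le\varepsilon_1$. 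What your argument actually establishes is the weaker statement $C\cap\ell^{-1}(b,\infty)=D\cap\ell^{-1}(b,\infty)$ (this does follow, since $\tilde\varphi\le b$ on $\pi(K)$ and $\tilde\varphi=\varphi$ elsewhere); the containment $D\subseteq C$, hence (2) as stated, is not proved. Replacing $D$ by $C\cap\{t\ge\tilde\varphi\}$ rescues (2) but then the lower boundary is the graph of $\max(\varphi,\tilde\varphi)$ and conclusions (1) and (3) are lost, so the one-sided control cannot be dodged.

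The remedy you sketch is not only unverified but misdescribes the machinery you would have to modify. The approximations behind Theorems \ref{a sufficent condition for fine approximation property for convex functions on Rd} and \ref{a sufficent condition for C1 fine approximation property for convex functions on Rd} are not convolutions of $f$ itself: they are built from the tilted functions $\widetilde f_n=(1-r_n)(f_n-\beta_n)+\beta_n$ together with smooth functions $\varphi_n$ chosen \emph{below} them ($\widetilde f_n-\varepsilon_n'\le\varphi_n\le\widetilde f_n$, produced in the $C^0$ case by the corner construction of Theorem \ref{uniform approximation of convex by real analytic convex}, which approximates from below), glued by the smooth maxima of Lemma \ref{smooth maxima}; the resulting $F$ of Corollary \ref{prescribing continuous boundary values and subdifferentials of real analytic convex functions} need not satisfy $F\ge f$ anywhere on $\textrm{int}(K)$. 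Since the admissible error must shrink to $0$ at $\partial\pi(K)$, where $F$ has to equal $\varphi$ with matching subdifferential, you cannot restore $F\ge\varphi$ afterwards by adding a constant or by an extra mollification without destroying either the seam matching or convexity. What is needed is a genuine one-sided (``from above'') analogue of Theorem \ref{a sufficent condition for fine approximation property for convex functions on Rd} and of Corollary \ref{prescribing continuous boundary values and subdifferentials of real analytic convex functions}; this is plausible (convolution does majorize a convex function on compact sets, and $M_\varepsilon$ of two majorants is again a majorant, so the same gluing scheme can in principle be run from above), but it has to be carried out with the boundary-matching estimates, not asserted in one sentence. Until that is done, conclusion (2) --- equivalently $D\subseteq C$ --- remains unproved in your proposal.
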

One might like to compare the above corollary with the main result of
\cite{Ghomi2}, which provides a procedure for smoothing the edges and
vertices of a convex polytope.

\section{A general gluing technique}

In order to prove Theorem \ref{main theorem} we will use the following.

\begin{lemma}[Smooth maxima]\label{smooth maxima}
For every $\varepsilon>0$ there exists a $C^\infty$ function $M_{\varepsilon}:\R^{2}\to\R$ with the following properties:
\begin{enumerate}
\item $M_{\varepsilon}$ is convex;
\item $\max\{x, y\}\leq M_{\varepsilon}(x, y)\leq \max\{x,y\}+\frac{\varepsilon}{2}$ for all $(x,y)\in\R^2$.
\item $M_{\varepsilon}(x,y)=\max\{x,y\}$ whenever $|x-y|\geq\varepsilon$.
\item $M_{\varepsilon}(x,y)=M_{\varepsilon}(y,x)$.
\item $\textrm{Lip}(M_{\varepsilon})=1$ with respect to the norm $\|\cdot\|_{\infty}$ in $\R^2$.
\item $y-\varepsilon\leq x<x'\implies M_{\varepsilon}(x,y)<M_{\varepsilon}(x',y)$.
\item $x-\varepsilon\leq y<y'\implies M_{\varepsilon}(x,y)<M_{\varepsilon}(x,y')$.
\item $x\leq x', y\leq y' \implies M_{\varepsilon}(x,y)\leq M_{\varepsilon}(x', y')$, with a strict inequality in the case when both $x<x'$ and $y<y'$.
\end{enumerate}
\end{lemma}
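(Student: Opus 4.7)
\smallskip

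\textbf{Plan.} The natural idea is to write $\max\{x,y\}=\tfrac{x+y}{2}+\tfrac{|y-x|}{2}$ and smooth only the singular part. Concretely, I would fix a nonnegative, even, $C^\infty$ mollifier $\rho_\varepsilon:\R\to[0,\infty)$ with $\operatorname{supp}\rho_\varepsilon\subseteq[-\varepsilon,\varepsilon]$, $\int_\R\rho_\varepsilon=1$, and $\rho_\varepsilon>0$ on $(-\varepsilon,\varepsilon)$, and set
\[
\psi_\varepsilon(t)=\int_{-\varepsilon}^{\varepsilon}\tfrac{|t-s|}{2}\,\rho_\varepsilon(s)\,ds,\qquad M_\varepsilon(x,y)=\tfrac{x+y}{2}+\psi_\varepsilon(y-x).
\]
Then $\psi_\varepsilon\in C^\infty(\R)$ is even and convex, and $\psi_\varepsilon''=\rho_\varepsilon$ (since $(|t|/2)''=\delta_0$ in the sense of distributions). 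So $M_\varepsilon\in C^\infty(\R^2)$ and $M_\varepsilon(x,y)=M_\varepsilon(y,x)$, giving (1)/partly (4).

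\smallskip

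\textbf{Convexity, bounds and equality off the diagonal strip.} The Hessian of $M_\varepsilon$ equals $\psi_\varepsilon''(y-x)\begin{pmatrix}1&-1\\-1&1\end{pmatrix}$, whose eigenvalues are $0$ and $2\psi_\varepsilon''(y-x)\ge 0$, so (1) holds. For (2) I would use that convolving the convex function $|t|/2$ with the probability kernel $\rho_\varepsilon$ gives $\psi_\varepsilon(t)\ge|t|/2$ (Jensen, using that $\rho_\varepsilon$ has mean $0$), while $\psi_\varepsilon(t)-|t|/2\le\int\tfrac{|s|}{2}\rho_\varepsilon(s)\,ds\le\varepsilon/2$ by the triangle inequality. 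For (3), if $|y-x|\ge\varepsilon$, then $t-s$ has constant sign for $s\in\operatorname{supp}\rho_\varepsilon$, so $|t-s|/2$ is affine in $s$ there; since $\rho_\varepsilon$ has mean $0$, $\psi_\varepsilon(t)=|t|/2$, giving $M_\varepsilon(x,y)=\max\{x,y\}$.

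\smallskip

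\textbf{Lipschitz estimate and monotonicities.} Differentiating, $\psi_\varepsilon'(t)=\int\tfrac{\mathrm{sgn}(t-s)}{2}\rho_\varepsilon(s)\,ds$, which takes values in $[-\tfrac12,\tfrac12]$, with $\psi_\varepsilon'(t)=\tfrac12$ iff $t\ge\varepsilon$ and $\psi_\varepsilon'(t)=-\tfrac12$ iff $t\le-\varepsilon$ (here the strict positivity of $\rho_\varepsilon$ on $(-\varepsilon,\varepsilon)$ is used). Then
\[
\partial_x M_\varepsilon=\tfrac12-\psi_\varepsilon'(y-x)\in[0,1],\qquad \partial_y M_\varepsilon=\tfrac12+\psi_\varepsilon'(y-x)\in[0,1],
\]
and their sum is $1$; this yields (5), since the Lipschitz constant with respect to $\|\cdot\|_\infty$ equals $\sup\|\nabla M_\varepsilon\|_1$. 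For (6): if $y-\varepsilon\le x<x'$, then for every $s\in(x,x')$ one has $y-s<\varepsilon$, so $\psi_\varepsilon'(y-s)<\tfrac12$, and
\[
M_\varepsilon(x',y)-M_\varepsilon(x,y)=\int_x^{x'}\bigl(\tfrac12-\psi_\varepsilon'(y-s)\bigr)\,ds>0.
\]
Property (7) follows by symmetry. For (8), on the segment from $(x,y)$ to $(x',y')$ with $x<x'$, $y<y'$, the directional derivative equals $\tfrac{(x'-x)+(y'-y)}{2}+\psi_\varepsilon'(y-x)\bigl((y'-y)-(x'-x)\bigr)$, which by $|\psi_\varepsilon'|\le\tfrac12$ is bounded below by $\min\{x'-x,\,y'-y\}>0$; integration gives the strict inequality.

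\smallskip

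\textbf{Expected difficulty.} The whole argument is essentially a smoothed absolute value; the only subtle point is the \emph{strict} monotonicities at the boundary of the strip $|y-x|=\varepsilon$, where $\partial_x M_\varepsilon$ or $\partial_y M_\varepsilon$ vanishes pointwise. This is handled cleanly once one observes that the condition $y-\varepsilon\le x$ in (6) still forces $y-s<\varepsilon$ strictly on the open interval $(x,x')$, so strict positivity of the \emph{integral} survives even though the integrand may vanish at the endpoint.
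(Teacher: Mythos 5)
Your proposal is correct and follows essentially the same route as the paper: both write $M_\varepsilon(x,y)=\tfrac{x+y}{2}+\tfrac{1}{2}\theta(x-y)$ with $\theta$ a smooth, even, convex, $1$-Lipschitz regularization of $|t|$ that agrees with $|t|$ for $|t|\ge\varepsilon$, and then read off properties (1)--(8) from the sign and size of $\theta'$ exactly as you do. The only difference is that the paper simply asserts the existence of such a $\theta$ ("it is easy to construct"), whereas you build it explicitly as $|\cdot|*\rho_\varepsilon$ and verify the needed bounds via Jensen and the mean-zero mollifier, which is a harmless (indeed welcome) elaboration rather than a different method.
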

We will call $M_{\varepsilon}$ a smooth maximum.
\begin{proof}
It is easy to construct a $C^{\infty}$ function $\theta:\R\to (0,
\infty)$ such that:
\begin{enumerate}
\item $\theta(t)=|t|$ if and only if $|t|\geq\varepsilon$;
\item $\theta$ is convex and symmetric;
\item $\textrm{Lip}(\theta)=1$.
\end{enumerate}
Then it is also easy to check that the function $M_{\varepsilon}$ defined by
$$
M_{\varepsilon}(x,y)=\frac{x+y+\theta(x-y)}{2}
$$
satisfies the required properties. For instance, let us check
properties (v), (vi), (vii) and (viii), which are perhaps less
obvious than the others. Since $\theta$ is $1$-Lipschitz we have
\begin{eqnarray*}
    & & M_{\varepsilon}(x,y)-M_{\varepsilon}(x', y')=\frac{x-x' +y-y'+\theta(x-y)-\theta(x'-y')}{2}\leq \\
    & &\frac{(x-x)' +(y-y')+|x-x'-y+y'|}{2}=\\
    & &\max\{x-x', y-y'\}\leq \max\{|x-x'|, |y-y'|\},
\end{eqnarray*}
which establishes (v). To verify (vi) and (vii), note that our function $\theta$ must satisfy $|\theta'(t)|<1 \iff  |t|<\varepsilon$. Then we have
$$
\frac{\partial M_{\varepsilon}}{\partial x }(x,y)=\frac{1}{2}\left( 1+\theta'(x-y)\right)\geq \frac{1}{2}\left( 1-|\theta'(x-y)|\right)>0 \textrm{ whenever } |x-y|<\varepsilon,
$$
while
$$
\frac{\partial M_{\varepsilon}}{\partial x }(x,y)=\frac{1}{2}\left( 1+\theta'(x-y)\right)= \left\{
                                                    \begin{array}{ll}
                                                      1, & \textrm{ if } x\geq y+\varepsilon, \\
                                                      0, & \textrm{ if } y\geq x+\varepsilon.
                                                    \end{array}
                                                  \right.
$$
This implies (vi) and, together with (iv), also (vii) and the first part of (viii). Finally, if for instance we have $x'>x=\max\{x,y\}$ then $M_{\varepsilon}(x,y)<M_{\varepsilon}(x',y)$ by (vi), and if in addition $y'>y$ then $M_{\varepsilon}(x',y)\leq M_{\varepsilon}(x',y')$ by the first part of (viii), hence $M_{\varepsilon}(x,y)<M_{\varepsilon}(x',y')$. This shows the second part of (viii).
\end{proof}

The smooth maxima $M_{\varepsilon}$ are useful to approximate the
maximum of two functions without losing convexity or other key
properties of the functions, as we next see.

\begin{proposition}\label{properties of M(f,g)}
Let $U\subseteq X$ be as in the statement of Theorem \ref{main theorem},
$M_{\varepsilon}$ as in the preceding Lemma, and let $f, g: U\to\R$
be convex functions. For every $\varepsilon>0$, the function
$M_{\varepsilon}(f,g):U\to\R$ has the following properties:
\begin{enumerate}
\item $M_{\varepsilon}(f,g)$ is convex.
\item If $f$ is $C^k$ on $\{x: f(x)\geq g(x)-\varepsilon\}$ and $g$ is $C^k$ on $\{x: g(x)\geq f(x)-\varepsilon\}$ then $M_{\varepsilon}(f,g)$ is $C^k$ on $U$. In particular, if $f, g$ are $C^k$, then so is $M_{\varepsilon}(f,g)$.
\item $M_{\varepsilon}(f,g)=f$ if $f\geq g+\varepsilon$.
\item $M_{\varepsilon}(f,g)=g$ if $g\geq f+\varepsilon$.
\item $\max\{f,g\}\leq M_{\varepsilon}(f,g)\leq \max\{f,g\} + \varepsilon/2$.
\item $M_{\varepsilon}(f,g)=M_{\varepsilon}(g, f)$.
\item $\textrm{Lip}(M_{\varepsilon}(f,g)_{|_B})\leq \max\{ \textrm{Lip}(f_{|_B}), \textrm{Lip}(g_{|_B}) \}$ for every ball $B\subset U$ (in particular $M_{\varepsilon}(f,g)$ preserves common local Lipschitz constants of $f$ and $g$).
\item If $f, g$ are strictly convex on a set $B\subseteq U$, then so is $M_{\varepsilon}(f,g)$.
\item If $f, g\in C^2(X)$ are strongly convex on a set $B\subseteq U$, then so is $M_{\varepsilon}(f,g)$.
\item If $f_1\leq f_2$ and $g_1\leq g_2$ then $M_{\varepsilon}(f_1, g_1)\leq M_{\varepsilon}(f_2, g_2)$.
%\item If $f, g\in C^{1}(X)$ and $\|Dg-Df\|\leq r$ then $\|DM_{\varepsilon}(f,g)-\frac{1}{2}\left( Df+Dg\right)\|\leq\frac{r}{2}.$
\end{enumerate}
\end{proposition}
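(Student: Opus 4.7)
The plan is to reduce every claim to the corresponding property of the two-variable smooth maximum $M_\varepsilon:\R^2\to\R$ from Lemma \ref{smooth maxima}. Write $\phi=M_\varepsilon$ and $h(x)=\phi(f(x),g(x))$. Items (3)--(6) and (10) are pointwise consequences of the corresponding items of the Lemma, so they come essentially for free.

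For convexity (1) I would exploit the two features of $\phi$ simultaneously: it is convex \emph{and} monotone non-decreasing in each variable (parts (i) and (viii) of the Lemma). Given $x,y\in U$ and $t\in[0,1]$, convexity of $f$ and $g$ yields $f((1-t)x+ty)\leq(1-t)f(x)+tf(y)$ and the analogous inequality for $g$; monotonicity of $\phi$ in each argument lets me push these inside $\phi$, and convexity of $\phi$ then splits the result to give $h((1-t)x+ty)\leq (1-t)h(x)+th(y)$. The strict convexity (8) follows from the same two-step argument, using the strict monotonicity part of Lemma \ref{smooth maxima}(viii) together with the strict convexity of $f$ and $g$.

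The Lipschitz bound (7) is immediate from $\textrm{Lip}(\phi)=1$ in the $\|\cdot\|_\infty$ norm, which gives $|h(x)-h(y)|\leq\max\{|f(x)-f(y)|,|g(x)-g(y)|\}$ and hence the stated estimate on every ball $B\subset U$. For the strong convexity (9) I would compute the Hessian of $h=\phi\circ(f,g)$ along a geodesic: it splits as the quadratic form $D^2\phi(f,g)$ evaluated on the pair $(df(v),dg(v))$, which is $\geq 0$ because $\phi$ is convex, plus the pointwise positive combination $\phi_x(f,g)\,D^2f(v,v)+\phi_y(f,g)\,D^2g(v,v)$. From the explicit formula $\phi(u,v)=\tfrac12(u+v+\theta(u-v))$ one reads off $\phi_x+\phi_y\equiv 1$ with both partials nonnegative; so at every point at least one of $\phi_x(f,g),\phi_y(f,g)$ is $\geq 1/2$, and strong convexity of $f$ and $g$ then forces the Hessian of $h$ to be strictly positive on nonzero tangent vectors. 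On a Riemannian manifold the same computation is done along geodesics.

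The one genuinely delicate item is the regularity claim (2), which requires a little care with overlapping regions. The hypotheses are designed precisely so that on the transition set $\{|f-g|\leq\varepsilon\}$ \emph{both} $f$ and $g$ are $C^k$, and there $h$ coincides with the smooth formula $\tfrac12(f+g+\theta(f-g))$, hence is $C^k$. On the open set $\{f>g+\varepsilon\}$ one has $h=f$ which is $C^k$ by assumption, and symmetrically $h=g$ is $C^k$ on $\{g>f+\varepsilon\}$. The only remaining point is to match these local representations on the boundaries $\{f=g\pm\varepsilon\}$, but this is automatic because $\theta(t)=|t|$ for $|t|\geq\varepsilon$, so the three formulas agree on open neighbourhoods of these matching sets. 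This bookkeeping is the main (if modest) technical obstacle of the proposition.
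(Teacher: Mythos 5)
Your proposal is correct and follows essentially the same route as the paper: every item is reduced to the two-variable smooth maximum, convexity and strict convexity come from combining monotonicity of $M_{\varepsilon}$ in each argument with its convexity on $\R^2$, and strong convexity comes from the second-order computation along lines (or geodesics) using $\partial_x M_\varepsilon+\partial_y M_\varepsilon\equiv 1$, $|\theta'|\leq 1$ and $\theta''\geq 0$. The extra detail you supply for item (2) (local formulas on the three regions) is just an expansion of what the paper dismisses as obvious, so there is no substantive difference.
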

\begin{proof}
Properties (ii), (iii), (iv), (v), (vi), (vii) and (x) are obvious
from the preceding lemma. To check (i) and (viii), we simply use
(x) and convexity of $f,g$ and $M_{\varepsilon}$ to see that,
for $x, y\in U$, $t\in [0,1]$,
\begin{eqnarray*}
& &
M_{\varepsilon}\left(f(tx+(1-t)y), g(tx+(1-t)y)\right)\leq \\
& &M_{\varepsilon}\left(tf(x)+(1-t)f(y), tg(x)+(1-t)g(y)\right)=\\
& &
M_{\varepsilon}\left(t(f(x), g(x))+(1-t)(f(y), g(y))\right)\leq \\
& & t M_{\varepsilon}(f(x), g(x))+(1-t)M_{\varepsilon}(f(y), g(y)),
\end{eqnarray*}
and, according to (viii) in the preceding lemma, the first
inequality is strict whenever $f$, $g$ are strictly convex and
$0<t<1$. To check (ix), it is sufficient to see that the function
$t\mapsto M_{\varepsilon}(f,g)(\gamma(t))$ has a strictly positive
second derivative at each $t$, where $\gamma(t)=x+tv$ with $v\neq 0$ (or, in the
Riemannian case, $\gamma$ is a nonconstant geodesic). So, by replacing $f, g$
with $f(\gamma(t))$ and $g(\gamma(t))$ we can assume that $f$ and
$g$ are defined on an interval $I\subseteq\R$ on which we have
$f''(t)>0, g''(t)>0$. But in this case we easily compute
\begin{eqnarray*}
& &\frac{d^2}{dt^2}M_{\varepsilon}(f(t), g(t))=\\
& &\frac{ \left(1+\theta'(f(t)-g(t))\right) f''(t) +
\left(1-\theta'(f(t)-g(t))\right) g''(t)}{2}
 + \frac{\theta''(f(t)-g(t)) \left( f'(t)-g'(t)\right)^{2}}{2}\geq\\
& &\geq \frac{1}{2}\min\{f''(t), g''(t)\}>0,
\end{eqnarray*}
because $|\theta'|\leq 1$ and $\theta''\geq 0$.
\end{proof}

\medskip

\begin{center}
{\bf Proof of Theorem \ref{main theorem}.}
\end{center}

Given a continuous convex function $f:U\to\R$ and $\varepsilon>0$,
we start defining $f_1=f$ and use the assumption that
$f_1-\varepsilon/2$ can be approximated from below by $C^k$ convex
functions, to find a $C^k$ convex function $h_1:U\to\R$ such that
$$ f_1 -\varepsilon\leq h_1 \textrm{
on } B_1, \textrm{ and }   h_1 \leq f_1-\frac{\varepsilon}{2} \textrm{
on } U.$$ We put $g_1=h_1$. Now define $f_2=f_1-\varepsilon$ and
find a convex function $h_2\in C^k(U)$ such that
$$f_2-\frac{\varepsilon}{2}\leq h_2 \textrm{ on } B_2, \textrm{ and } h_2 \leq f_2-\frac{\varepsilon}{4}
\textrm{ on } U.$$ Set
$$g_2=M_{\frac{\varepsilon}{10^2}}(g_1, h_2).$$ By the preceding
proposition we know that $g_2$ is a convex $C^k$ function
satisfying $$\max\{g_1, h_2\}\leq g_2\leq \max\{g_1,
h_2\}+\frac{\varepsilon}{10^{2}} \textrm{ on } U,$$ and
$$
g_{2}(x)=\max\{g_1(x), h_{2}(x)\} \textrm{ whenever } |h_{1}(x)-h_2(x)|\geq \frac{\varepsilon}{10^2}.
$$
\begin{claim}
We have
$$
g_2=g_1 \, \textrm{ on } \, B_1, \,\,\, \textrm{ and } \,\,\,
f-\varepsilon-\frac{\varepsilon}{2}\leq g_2\leq f-\frac{\varepsilon}{2}+\frac{\varepsilon}{10^2} \, \textrm{ on } \, B_2.
$$
\end{claim}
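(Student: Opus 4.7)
The plan is to unpack the explicit bounds from the construction of $g_1=h_1$ and $h_2$ and then apply the relevant properties of the smooth maximum from Proposition \ref{properties of M(f,g)}. Recall that $h_1 \leq f-\varepsilon/2$ on $U$ with $f-\varepsilon \leq h_1$ on $B_1$, and $h_2 \leq f-\varepsilon-\varepsilon/4 = f-5\varepsilon/4$ on $U$ with $f-\varepsilon-\varepsilon/2 = f-3\varepsilon/2 \leq h_2$ on $B_2$.

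For the first assertion $g_2 = g_1$ on $B_1$, I would compute on $B_1$ the gap
\[
g_1 - h_2 = h_1 - h_2 \geq (f-\varepsilon) - \bigl(f-\tfrac{5\varepsilon}{4}\bigr) = \tfrac{\varepsilon}{4} > \tfrac{\varepsilon}{10^2},
\]
and then invoke property (iii) of Proposition \ref{properties of M(f,g)} applied with tolerance $\varepsilon/10^2$: since $g_1 \geq h_2 + \varepsilon/10^2$ on $B_1$, we get $M_{\varepsilon/10^2}(g_1,h_2) = g_1$ there.

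For the two inequalities on $B_2$, I would use the two-sided sandwich from property (v) of Proposition \ref{properties of M(f,g)}, namely $\max\{g_1,h_2\} \leq g_2 \leq \max\{g_1,h_2\} + \varepsilon/10^2$. For the upper bound, both $g_1 \leq f - \varepsilon/2$ and $h_2 \leq f - 5\varepsilon/4 \leq f - \varepsilon/2$ hold on all of $U$, so $\max\{g_1,h_2\} \leq f-\varepsilon/2$ on $U$, giving $g_2 \leq f-\varepsilon/2+\varepsilon/10^2$ on $U$ (in particular on $B_2$). For the lower bound, drop the max in favor of $h_2$ alone: $g_2 \geq \max\{g_1,h_2\} \geq h_2 \geq f-3\varepsilon/2$ on $B_2$.

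There is no real obstacle here; the claim is essentially bookkeeping with the tolerances, and the only point to be careful about is that the tolerance $\varepsilon/10^2$ used in the smooth maximum is chosen much smaller than the gap $\varepsilon/4$ between $g_1$ and $h_2$ on $B_1$, which is exactly what makes the smooth maximum collapse to $g_1$ on $B_1$. This collapsing is the mechanism that will, in the iterative construction that follows, ensure $g_n = g_{n-1}$ on $B_{n-1}$ and hence the eventual stabilization of the sequence on each bounded piece.
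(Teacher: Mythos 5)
Your proof is correct and follows essentially the same route as the paper: the identity $g_2=g_1$ on $B_1$ comes from the gap $g_1-h_2\geq\varepsilon/4>\varepsilon/10^2$ together with the collapsing property of $M_{\varepsilon/10^2}$ (valid pointwise, exactly as the paper uses it), and the two-sided bound on $B_2$ comes from $\max\{g_1,h_2\}\leq M_{\varepsilon/10^2}(g_1,h_2)\leq\max\{g_1,h_2\}+\varepsilon/10^2$ combined with the defining estimates on $h_1$ and $h_2$. The only cosmetic difference is that you treat all of $B_2$ at once while the paper splits into $B_1$ and $B_2\setminus B_1$; the bookkeeping is the same.
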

\noindent Indeed, if $x\in B_1$, $$g_1(x)\geq f_1(x)-\varepsilon=f_{2}(x)-\frac{\varepsilon}{4}+\frac{\varepsilon}{4}\geq h_2(x)+\frac{\varepsilon}{4}\geq h_2(x)+\frac{\varepsilon}{10^2},
$$
hence $g_2(x)=g_1(x)$, and in particular $f(x)-
\frac{\varepsilon}{2}\geq g_{2}(x)\geq f(x)-\varepsilon$. While,
if $x\in B_2\setminus B_1$ then
\begin{eqnarray*}
& &
f(x)-\varepsilon-\frac{\varepsilon}{2}\leq \max\{g_1(x), h_2(x)\}\leq g_{2}(x)\leq
\max\{g_1(x), h_2(x)\}+\frac{\varepsilon}{10^{2}}\leq\\
& & \max\{f(x)-\frac{\varepsilon}{2},
f(x)-\varepsilon-\frac{\varepsilon}{4}\}+
\frac{\varepsilon}{10^2}=f(x)-\frac{\varepsilon}{2}+\frac{\varepsilon}{10^2}.
\end{eqnarray*}
This proves the claim.

Next, define $f_3=f_2-\varepsilon/2=f-\varepsilon-\varepsilon/2$,
find a convex $C^k$ function $h_3$ on $U$ so that
$$
f_3-\frac{\varepsilon}{2^2}\leq h_3 \textrm{ on } B_3, \textrm{ and } h_3 \leq f_{3}-\frac{\varepsilon}{2^3} \, \textrm{ on } U,
$$
and set $$g_3=M_{\frac{\varepsilon}{10^3}}(g_2, h_3).
$$
\begin{claim} We have
$$
g_3=g_2 \, \textrm{ on } \, B_2, \,\,\, \textrm{ and } \,\,\,
f-\varepsilon-\frac{\varepsilon}{2}-\frac{\varepsilon}{2^{2}}\leq g_3
\leq f-\frac{\varepsilon}{2}+\frac{\varepsilon}{10^2}+\frac{\varepsilon}{10^3} \, \textrm{ on } \, B_3.
$$
\end{claim}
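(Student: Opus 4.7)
The plan is to repeat, essentially verbatim, the reasoning that proved Claim 1, but shifted one level up: now $g_2$ plays the role previously played by $g_1$, $h_3$ plays the role of $h_2$, the smoothing threshold becomes $\varepsilon/10^3$ instead of $\varepsilon/10^2$, and the target accuracy on the larger ball becomes $\varepsilon/2^2$ instead of $\varepsilon/2$. Both assertions of the claim reduce to pointwise estimates of the same flavour as before.

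To prove $g_3=g_2$ on $B_2$, I would invoke property (iii) of Proposition \ref{properties of M(f,g)}, which reduces $M_{\varepsilon/10^3}(g_2,h_3)$ to $g_2$ wherever $g_2\geq h_3+\varepsilon/10^3$. Claim 1 supplies $g_2\geq f-\varepsilon-\varepsilon/2$ on $B_2$, while the global control on $h_3$ gives $h_3\leq f_3-\varepsilon/2^3=f-\varepsilon-\varepsilon/2-\varepsilon/8$ on $U$. Subtracting yields $g_2-h_3\geq \varepsilon/8$ on $B_2$, comfortably larger than $\varepsilon/10^3$, so $g_3=g_2$ on $B_2$.

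The two-sided bound on $B_3$ rests on property (v) of Proposition \ref{properties of M(f,g)}, which says $\max\{g_2,h_3\}\leq g_3\leq \max\{g_2,h_3\}+\varepsilon/10^3$. The lower estimate is immediate, because on $B_3$ one has $h_3\geq f_3-\varepsilon/2^2=f-\varepsilon-\varepsilon/2-\varepsilon/2^2$, and $g_3\geq h_3$. For the upper estimate I would first strengthen Claim 1 by observing that the upper bound on $g_2$ is actually valid on all of $U$: from $g_2\leq \max\{g_1,h_2\}+\varepsilon/10^2$, together with $g_1=h_1\leq f-\varepsilon/2$ and $h_2\leq f-\varepsilon-\varepsilon/4$ on $U$, one obtains $g_2\leq f-\varepsilon/2+\varepsilon/10^2$ on $U$. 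Combined with $h_3\leq f-\varepsilon-\varepsilon/2-\varepsilon/8$ this yields $\max\{g_2,h_3\}\leq f-\varepsilon/2+\varepsilon/10^2$, and adding $\varepsilon/10^3$ gives the desired upper bound on $B_3$.

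The step requiring most attention, and the only place where one might stumble, is this last upgrade: the stated form of Claim 1 furnishes an upper bound on $g_2$ only on $B_2$, so one must revisit the definition of $g_2$ to extract a (slightly weaker) global bound. Without this observation the upper estimate on $B_3\setminus B_2$ cannot be closed. This also foreshadows the design of the ensuing induction: at every stage one must propagate a \emph{global} upper bound on $g_n$ over all of $U$, because the next function $h_{n+1}$ is controlled from above on $U$ but from below only on $B_{n+1}$.
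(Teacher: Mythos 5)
Your proof is correct and follows essentially the same route as the paper, which dismisses this claim with ``easily checked as before'': the equality $g_3=g_2$ on $B_2$ comes from $g_2-h_3\geq\varepsilon/2^3>\varepsilon/10^3$ there, and the two-sided bound on $B_3$ from property (v) of Proposition \ref{properties of M(f,g)} together with the global controls on $h_3$, $g_1=h_1$ and $h_2$. Your explicit remark that the upper bound $g_2\leq f-\varepsilon/2+\varepsilon/10^2$ in fact holds on all of $U$ (not just on $B_2$) is precisely the implicit ingredient the paper's ``as before'' relies on for points of $B_3\setminus B_2$, and it correctly identifies the invariant that must be propagated through the induction.
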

\noindent This is easily checked as before.

In this fashion we can inductively define a sequence of $C^k$ convex functions $g_n$ on $U$ such that
$$
g_n=g_{n-1} \, \textrm{ on } \, B_{n-1}, \,\,\, \textrm{ and } \,\,\,
$$
$$
f-\varepsilon-\frac{\varepsilon}{2}-\frac{\varepsilon}{2^{2}}-...-\frac{\varepsilon}{2^{n-1}}\leq g_n
\leq f-\frac{\varepsilon}{2}+\frac{\varepsilon}{10^2}+\frac{\varepsilon}{10^3}+...+\frac{\varepsilon}{10^n} \, \textrm{ on } \, B_n
$$
(at each step of the inductive process we define $f_{n}=f_{n-1}-\varepsilon/2^{n-2}=f-\varepsilon-...-\varepsilon/2^{n-2}$, we find $h_{n}$ convex and $C^k$ such that $f_{n}-\varepsilon/2^{n-1}\leq h_n$ on $B_n$ and $h_n\leq f_n-\varepsilon/2^n$ on $U$, and we put $g_n=M_{\varepsilon/10^n}(g_{n-1}, h_n)$).

Having constructed a sequence $g_n$ with such properties, we
finally define $$g(x)=\lim_{n\to\infty}g_n(x).$$ Since we have
$g_{n+k}=g_{n}$ on $B_n$ for all $k\geq 1$, it is clear that
$g=g_n$ on each $B_n$, which implies that $g$ is $C^k$ and convex on $U$
(or even strongly convex when the $g_n$ are strongly convex).
Besides, for every $x\in U=\bigcup_{n=1}^{\infty}B_n$ we have
$$
f(x)-2\varepsilon=f(x)-\sum_{n=1}^{\infty}\frac{\varepsilon}{2^{n-1}}\leq g(x)\leq f(x)-\frac{\varepsilon}{2}+\sum_{n=2}^{\infty}\frac{\varepsilon}{10^n},
$$
hence $f-2\varepsilon\leq g\leq f$. \, \, \, $\Box$

\begin{remark}\label{the method preserves strong convexity etc}
From the above proof and from Proposition \ref{properties of
M(f,g)} it is clear that this method of transferring convex
approximations on bounded sets to global convex approximations
preserves strict and strong convexity, local Lipschitzness,
minimizers and order, whenever the given approximations on bounded
sets have these properties.
\end{remark}

\section{Proofs of Corollaries \ref{uniform approximation of convex functions by smooth
convex functions in Rn}, \ref{uniform
approximation of convex functions on Cartan Hadamard manifolds}
and \ref{uniform approximation of convex functions on Banach
spaces}.}

We will deduce our corollaries by combining Theorem \ref{main theorem} with the known results on approximation of convex functions on bounded sets mentioned in the introduction, and with the following.

\begin{proposition}\label{uniform approximation of Lipschitz functions is enough}
Let $X$ be $\R^d$, or a Cartan-Hadamard manifold
(not necessarily finite-dimensional), or a Banach space, and let $U\subseteq X$ be open and convex.
Assume that $U$ has the property that every Lipschitz convex function on $U$ can be approximated
by $C^k$ convex (resp. strongly convex) functions, uniformly on $U$.

Then every convex function $f:U\to\R$
which is bounded on bounded subsets $B$ of $U$ with $\textrm{dist}(B, \partial U)>0$ can be approximated from below by $C^k$ convex
(resp. strongly convex) functions, uniformly on bounded subsets of $U$.
\end{proposition}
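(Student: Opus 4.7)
The plan is to reduce the problem, by a single Lipschitz truncation step, to the hypothesis on Lipschitz convex functions. Given a bounded set $B$ with $\textrm{dist}(B, \partial U) = d_0 > 0$ and $\varepsilon>0$, I would first enlarge $B$ to an intermediate set $B'$ with $B \subset B' \subset U$ and $\textrm{dist}(B', \partial U) \geq d_0/2$. Since $f$ is bounded on $\overline{B'}$ by hypothesis, the standard local Lipschitz property of convex functions (valid in all three settings) provides a constant $L>0$ such that $f$ is $L$-Lipschitz on $B$, and moreover, at each $x\in B$ there exists a subgradient $\zeta_x$ (in the Euclidean, dual, or tangent-space sense) with $\|\zeta_x\|\leq L$.

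The key construction is the inf-convolution
$$
\tilde{f}(x) \;=\; \inf_{y\in U}\bigl\{\, f(y)+L\, d(x,y)\,\bigr\}.
$$
I claim $\tilde{f}$ is convex on $U$, globally $L$-Lipschitz, satisfies $\tilde{f}\leq f$ on $U$ (take $y=x$), and satisfies $\tilde{f}=f$ on $B$. The last point is because, for $x\in B$, the subgradient inequality $f(y)\geq f(x)+\zeta_x(\exp_x^{-1}(y))$ together with $\|\zeta_x\|\leq L$ gives $f(y)+L\,d(x,y)\geq f(x)$ for every $y\in U$. Convexity of $\tilde{f}$ in $\R^d$ or in a Banach space is the standard fact that inf-convolutions of convex functions are convex; in the Cartan-Hadamard case it follows from the classical nonpositive-curvature result that $t\mapsto d(\gamma_1(t),\gamma_2(t))$ is convex for any two geodesics, applied to the geodesic from $x_1$ to $x_2$ and the geodesic between near-minimizers $y_1,y_2$.

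With $\tilde{f}$ in hand, I invoke the hypothesis on Lipschitz convex functions to obtain a $C^k$ convex (respectively strongly convex) function $g_0:U\to\R$ with $|\tilde{f}-g_0|\leq \varepsilon/2$ on $U$. Setting $g := g_0 - \varepsilon/2$, one gets $g\leq \tilde{f}\leq f$ on all of $U$, while on $B$ one has $g \geq \tilde{f}-\varepsilon = f-\varepsilon$. Since subtracting a constant preserves both convexity and strong convexity, $g$ is the desired approximator.

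The main obstacles are essentially technical and geometric: (i) verifying that $\tilde{f}$ is convex in the Cartan-Hadamard case, which hinges on the convexity of geodesic-distance between geodesics (a nontrivial but classical ingredient), and (ii) arranging the choice of $L$ so that one simultaneously gets the Lipschitz bound on $B$ and the subgradient bound at every point of $B$ — but both follow from the uniform local Lipschitz estimate $\sup_{B}\|\zeta_x\|\leq (\sup_{B'}f-\inf_{B'}f)/\textrm{dist}(B,\partial B')$, which is available in each of the three ambient settings. No further use of Theorem \ref{main theorem} is needed here, since this proposition is exactly the bridge that later feeds into its hypotheses.
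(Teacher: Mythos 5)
Your proposal is correct and follows essentially the same route as the paper: the inf-convolution $\inf_{y\in U}\{f(y)+L\,d(x,y)\}$ with $L$ the Lipschitz constant of $f$ on the bounded set, the verification that it equals $f$ on $B$ via subgradients of norm at most $L$, convexity in the Cartan--Hadamard case via joint convexity of the distance, and then applying the hypothesis to this Lipschitz convex function and subtracting a constant to get the approximation from below. The only cosmetic differences (the intermediate set $B'$ and the explicit $\varepsilon/2$ shift) are details the paper handles implicitly.
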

\begin{proof}
It is well known that a convex function
$f:U\to\R$ which is bounded on bounded subsets $B$ of $U$ with $\textrm{dist}(B, \partial U)>0$ is
also Lipschitz on each such subset $B$ of $X$.
So let $B\subset U$ be bounded, open and convex with $\textrm{dist}(B, \partial U)>0$, put
$L=\textrm{Lip}(f_{|_B})$, and define
$$
g(x)=\inf\{f(y)+L\, d(x,y) :y\in U\},
$$
where $d(x,y)=\|x-y\|$ in the case when $X$ is $\R^d$ or a Banach space, and $d$ is the Riemannian distance in $X$ when $X$ is a Cartan-Hadamard manifold.
\begin{claim}
The function $g$ has the following properties:
\begin{enumerate}
\item $g$ is convex on $X$.
\item $g$ is $L$-Lipschitz on $X$.
\item $g=f$ on $B$.
\item $g\leq f$ on $U$.
\end{enumerate}
\end{claim}
These are well known facts in the vector space case, but perhaps
not so in the Riemannian setting, so let us say a few words about
the proof. Property (iv) is obvious. To see that the reverse
inequality holds on $B$, take $x\in B$ and a subdifferential
$\zeta\in D^{-}f(x)$ (we refer to \cite{AFLM}, \cite{AF2} for the
definitions and some properties of the Fr{\'e}chet subdifferential and
inf convolution on Riemannian manifolds). We have $\|\zeta\|_x\leq
L$ because $f$ is $L$-Lipschitz on $B$. Since $\exp_{x}:TX_{x}\to
X$ is a diffeomorphism, for every $y\in X$ there exists $v_y\in
TX_x$ such that $\exp_{x}(v_y)=y$. And, because $t\mapsto f(\exp_x(tv_y))$ is
convex, we have $f(\exp_{x}(tv_y))-f(x)\geq \langle
\zeta, tv_y\rangle_x$ for every $t$, and in particular, taking
$t=1$, we get $f(y)-f(x)\geq \langle \zeta, tv_y\rangle_x\geq
-\|\zeta\|_x \|v_y\|_x\geq -L d(x,y)$. Hence $f(y)+ L d(x,y)\geq
f(x)$ for all $y\in X$, and taking the inf we get $g(x)\geq f(x)$.
Therefore $g=f$ on $B$. Showing (ii) is easy (as a matter of fact
this is true in every metric space). Finally, to see that $g$ is
convex on $X$, one does have to use that $X$ is a Cartan-Hadamard
manifold. We
note that in a Cartan-Hadamard manifold $X$ the distance function
$d:X\times X\to [0, \infty)$ is globally convex (see for instance \cite[V.4.3]{Sakai} and
\cite[Corollary 4.2]{AF2}), and that if $X\times U\ni(x,y)\mapsto
F(x,y)$ is convex then $x\mapsto \inf_{y\in U}F(x,y)$ is also
convex on $X$ (see \cite[Lemma 3.1]{AF2}). Since $(x,y)\mapsto f(y)+L
d(x,y)$ is convex on $X\times U$, this shows (i).

\medskip

Now, for a given $\varepsilon>0$, by assumption there exists a $C^k$ convex
(resp. strongly convex) function $\varphi:U\to\R$ so that $g-\varepsilon\leq\varphi\leq g$ on $U$.
Since $g\leq f$ on $U$, and $g=f$ on $B$, this implies that $\varphi\leq f$ on $U$,
and $f-\varepsilon\leq\varphi$ on $B$.
\end{proof}

Let $f:\R^d\to\R$ be continuous. As we recalled in the
introduction, if $\delta:\R^d\to [0, \infty)$ is a $C^\infty$
function such that $\delta(x)=0$ whenever $\|x\|\geq 1$, and
$\int_{\R^d}\delta=1$, then the functions
$f_{\varepsilon}(x)=\int_{\R^d}f(x-y)\delta_{\varepsilon}(y)dy$
(where
$\delta_{\varepsilon}(x)=\varepsilon^{-d}\delta(x/\varepsilon)$)
are $C^\infty$ and converge to $f(x)$ uniformly on every compact
set, as $\varepsilon\searrow 0$. Moreover, as is well known and
easily checked:
\begin{enumerate}
\item If $f$ is uniformly continuous then $f_{\varepsilon}$ converges to $f$
uniformly on $\R^d$.
\item If $f$ is convex (resp. strictly, or strongly convex), so is $f_{\varepsilon}$.
\item If $f$ is Lipschitz, so is $f_{\varepsilon}$, and $\textrm{Lip}(f_{\varepsilon})=\textrm{Lip}(f)$.
\item If $f$ is locally Lipschitz, $\textrm{Lip}(f_{{\varepsilon}_{|_B}})=\textrm{Lip}(f_{|_{(1+\varepsilon)B}})$ for every ball $B$.
\item If $f\leq g$ then $f_{\varepsilon}\leq g_{\varepsilon}$.
\item If $f$ is $C^1$ then $Df_{\varepsilon}$ converges to $Df$ uniformly on compact subsets of $\R^d$
\end{enumerate}
Therefore this method provides uniform approximation of Lipschitz
convex functions by $C^{\infty}$ convex functions, uniformly on
$\R^d$. By Proposition \ref{uniform approximation of Lipschitz
functions is enough} we then have that every (not necessarily
Lipschitz) convex function $f:\R^d\to\R$ can be approximated from
below by $C^\infty$ convex functions, uniformly on bounded sets.
And by Theorem \ref{main theorem} we get that every convex
function $f:\R^d\to\R$ can be approximated from below by
$C^\infty$ convex functions, uniformly on $\R^d$. Moreover, it is
clear that strict (or strong) convexity, local Lipschitzness, and
order are preserved by the combination of these techniques.

The case when $X=U$ is an open convex subset of $\R^d$ can be treated in
a similar manner. We consider the open, bounded convex sets
$B_m=\{x\in U
: \textrm{dist}(x, \partial U)>1/m, \|x\| <m\}$, so we have
$\overline{B_m}\subset B_{m+1}$, $\textrm{dist}(B_m, \partial U)>0$ and $U=\bigcup_{m=1}^{\infty}B_m$.
By combining Theorem \ref{main theorem} and Proposition
\ref{uniform approximation of Lipschitz functions is enough}, it
suffices to show that every Lipschitz, convex function $f:U\to\R$
can be approximated by $C^{\infty}$ convex functions, uniformly on
$U$. This can be done as follows: set
$L=\textrm{Lip}(f)$ and consider
    $$
g(x)=\inf\{f(y)+L \|x-y\| \, : \, y\in U\}, \,\,\, x\in\R^d,
    $$
which is a Lipschitz, convex extension of $f$ to all of $\R^d$,
with $\textrm{Lip}(f)=\textrm{Lip}(g)$. By using the above
argument, $g$ can be approximated by $C^\infty$ convex functions,
uniformly on $\R^d$. In particular, $f=g_{|_U}$ can be
approximated by such functions, uniformly on $U$. This proves
Corollary \ref{uniform approximation of convex functions by smooth
convex functions in Rn}.

Let us see how one can deduce Corollaries \ref{uniform
approximation of convex functions on Cartan Hadamard manifolds}
and \ref{uniform approximation of convex functions on Banach
spaces}. As in the case of $\R^d$, the combination of Theorem
\ref{main theorem}, Proposition \ref{uniform approximation of
Lipschitz functions is enough} and Remark \ref{the method
preserves strong convexity etc} reduces the problem to showing
that every {\em Lipschitz} convex function $f:X\to\R$ (where $X$
stands for a Cartan-Hadamard manifold or a Banach space whose dual
is locally uniformly convex) can be approximated by $C^1$ convex
functions, uniformly on $X$. It is well known that this can be
done via the inf convolution of $f$ with squared distances: the
functions
    $$
f_{\lambda}(x)=\inf\{ f(y)+ \frac{1}{2\lambda} d(x,y)^2 \, : \,
y\in X\}
    $$
are $C^1$, convex, Lipschitz (with the same constant as $f$), have
the same minimizers as $f$, are strictly convex whenever $f$ is and $X$ is reflexive (because in this case the inf defining $f_{\lambda}$ is always attained),
and converge to $f$ as $\lambda\searrow 0$, uniformly on all of
$X$. See \cite{Stromberg} for a survey on the inf convolution
operation in Banach spaces, and \cite{AF2} for the Cartan-Hadamard
case.

\section{Real analytic convex approximations}

Let us now prove Theorem \ref{uniform approximation of convex by real analytic convex}.
As mentioned in the introduction, real analytic approximations of partitions of
unity cannot be employed to glue local approximations into a
uniform convex approximation of $f$ on all of $\R^d$.

A natural approach to this problem would be showing that every convex function can be approximated by
$C^2$ strongly convex functions, and then using Whitney's theorem
on $C^2$-fine approximation of functions by real analytic
functions to conclude.
However, not every convex function
$f:\R^d\to\R$ can be approximated by strongly convex functions
uniformly on $\R^d$. For instance, it is not possible to
approximate a linear function by strongly convex functions.

We will show that, given a convex
function $f:\R^d\to\R$, either we can reduce the problem of
approximating $f$ by real analytic convex functions to some $\R^k$
with $k<d$, or else its graph is supported by a maximum of
finitely many {\em $(d+1)$-dimensional corners} which besides
approximates $f$ on a given bounded set (and which in turn we will
manage to approximate by strongly convex functions).

\begin{definition}[Supporting corners]
We will say that a function $C:\R^d\to\R$ is a
$k$-dimensional {\em corner function} on $\R^d$ if it is of the
form
    $$
C(x)=\max\{\, \ell_1 +b_1, \, \ell_2 +b_2, \, ..., \, \ell_k +b_k
\, \},
    $$
where the $\ell_j:\R^d\to\R$ are linear functions such that the
functions $L_{j}:\R^{d+1}\to\R$ defined by $L_{j}(x,
x_{d+1})=x_{d+1}-\ell_j(x)$, $1\leq j\leq k$, are linearly
independent, and the $b_j\in\R$. We will also say that a convex
function $f:U\subseteq\R^d\to\R$ is supported by $C$ at a point $x\in U$
provided we have $C\leq f$ on $U$ and $C(x)=f(x)$.
\end{definition}

\begin{lemma}\label{strongly convex approximation of corners}
If $C$ is a $(d+1)$-dimensional corner function on $\R^d$ then
$C$ can be approximated by $C^{\infty}$ strongly convex functions,
uniformly on $\R^d$.
\end{lemma}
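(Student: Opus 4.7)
The plan is to construct a single $C^{\infty}$ strongly convex approximation of $C$ via the \emph{log-sum-exp} smoothing of the maximum. For $t > 0$, I would set
\[
C_{t}(x) \;=\; t\log\!\left(\sum_{j=1}^{d+1} \exp\!\Bigl(\tfrac{\ell_{j}(x)+b_{j}}{t}\Bigr)\right).
\]
This is manifestly $C^{\infty}$ and convex (log-sum-exp is a convex function, precomposed here with affine maps and scaled by $t>0$), and the standard two-sided bound $\max_{j} a_{j} \leq t\log\sum_{j} e^{a_{j}/t} \leq \max_{j} a_{j} + t\log(d+1)$ yields
\[
C \;\leq\; C_{t} \;\leq\; C + t\log(d+1)
\]
on all of $\R^{d}$. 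Given $\varepsilon > 0$, choosing $t = \varepsilon/\log(d+1)$ therefore produces uniform $\varepsilon$-approximation on all of $\R^{d}$.

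The heart of the argument is the Hessian calculation. Writing $p_{j}(x) = e^{(\ell_{j}(x)+b_{j})/t} / \sum_{k} e^{(\ell_{k}(x)+b_{k})/t}$ and $\bar\ell(x) = \sum_{j} p_{j}(x)\nabla\ell_{j}$, a direct differentiation gives
\[
D^{2} C_{t}(x) \;=\; \tfrac{1}{t} \sum_{j=1}^{d+1} p_{j}(x)\, \bigl(\nabla\ell_{j}-\bar\ell(x)\bigr) \otimes \bigl(\nabla\ell_{j}-\bar\ell(x)\bigr),
\]
i.e.\ $1/t$ times the covariance matrix of the discrete random vector taking value $\nabla\ell_{j}$ with probability $p_{j}(x)$. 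Because every $p_{j}(x)$ is strictly positive, the kernel of $D^{2}C_{t}(x)$ consists exactly of those $v \in \R^{d}$ for which all $\nabla\ell_{j}$ have the same inner product with $v$, that is, of those $v$ such that $\{\nabla\ell_{1},\ldots,\nabla\ell_{d+1}\}$ lies in a common affine hyperplane of $\R^{d}$.

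The remaining step is an elementary linear-algebra check that the hypothesis of the lemma precisely rules out this degeneracy. Writing $L_{j}$ in coordinates as $(-\nabla\ell_{j},1) \in (\R^{d+1})^{\ast}$, a vanishing combination $\sum c_{j}L_{j}=0$ unravels to $\sum c_{j}=0$ and $\sum c_{j}\nabla\ell_{j}=0$; hence linear independence of $L_{1},\ldots,L_{d+1}$ is exactly the affine independence of $\nabla\ell_{1},\ldots,\nabla\ell_{d+1}$ in $\R^{d}$. In particular no nonzero $v$ can force all the $\nabla\ell_{j}$ into a common affine hyperplane, so $D^{2}C_{t}(x)$ is positive definite at every $x$ and $C_{t}$ is strongly convex. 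I expect the only conceptual obstacle to be the tempting alternative of iterating the smooth maxima $M_{\varepsilon}$ of Lemma~\ref{smooth maxima} on the pieces $\ell_{j}+b_{j}$: since the affine pieces are not themselves strongly convex, clause (ix) of Proposition~\ref{properties of M(f,g)} does not apply, and the iterated function is only affine (not strongly convex) on the large region where no two pieces are within $\varepsilon$ of each other. The log-sum-exp sidesteps this by encoding the non-degeneracy of the corner directly into the covariance-type Hessian.
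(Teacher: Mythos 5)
Your proof is correct, and it takes a genuinely different route from the paper. The paper first disposes of property (iii) of Lemma \ref{smooth maxima} by building a modified smooth maximum $\widetilde{M}_{\varepsilon}(x,y)=\tfrac12\bigl(x+y+\widetilde{\theta}_{\varepsilon}(x-y)\bigr)$ with $\widetilde{\theta}_{\varepsilon}=|\cdot|*H_{r}+\varepsilon/2$ (so that $\widetilde{\theta}_{\varepsilon}''>0$ everywhere), then normalizes the corner by an affine change of variables to $\max\{0,x_1,\dots,x_d\}$ and argues by induction on $d$, gluing a strongly convex approximation on $\R^{k}$ with one on $\R$ via $\widetilde{M}_{\varepsilon}$ and checking second derivatives along lines case by case. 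You instead give a single closed-form approximant, the log-sum-exp $C_t$, and your three ingredients are all sound: the two-sided bound $C\leq C_t\leq C+t\log(d+1)$ is the standard one; the Hessian identity $D^{2}C_{t}(x)=\tfrac1t\sum_j p_j(x)\,(\nabla\ell_j-\bar\ell(x))\otimes(\nabla\ell_j-\bar\ell(x))$ is a correct computation, so $v^{T}D^{2}C_{t}(x)v$ vanishes only if all $\langle\nabla\ell_j,v\rangle$ coincide; and the translation of linear independence of the $L_j=(-\nabla\ell_j,1)$ into affine independence of the $d+1$ gradients $\nabla\ell_j$ in $\R^{d}$ (equivalently, linear independence of $\nabla\ell_j-\nabla\ell_1$, $j\geq 2$) rules out any such $v\neq 0$, giving a positive definite Hessian at every point, which is exactly the paper's (pointwise, non-uniform) notion of strong convexity. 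Your closing remark about why naively iterating $M_{\varepsilon}$ on the affine pieces fails is accurate and is precisely the paper's motivation for $\widetilde{M}_{\varepsilon}$. What each approach buys: yours is shorter, non-inductive, needs no affine normalization, avoids Lemma \ref{smooth maxima} and Proposition \ref{properties of M(f,g)} entirely, and delivers the one-sided form $C\leq g\leq C+\varepsilon$ that is what the proof of Theorem \ref{uniform approximation of convex by real analytic convex} actually uses; the paper's route is longer but exercises the gluing machinery ($\widetilde{M}_{\varepsilon}$ and the second-derivative estimates) that it reuses elsewhere, e.g.\ in the inductive combination of strongly convex approximations.
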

\begin{proof}
We will need to use the following variation of the smooth maximum of Lemma \ref{smooth maxima}: given $\varepsilon, r>0$,
let $\beta_{\varepsilon, r}=|\cdot|*H_{r}+\varepsilon/2$, where $H_{r}(x)=\frac{1}{(4\pi r)^{1/2}}\exp(-x^{2}/4 r)$. We have $\beta_{\varepsilon, r}''(t)=2e^{-t^2/4 r}/(4 r\pi)^{1/2}>0$, so $\beta_{\varepsilon, r}$ is strongly convex and $1$-Lipschitz,  and as $r\to 0$ we have $\beta_{\varepsilon, r}(t)\to |t|+\varepsilon/2$ uniformly on $t\in\R$, so we may find $r=r(\varepsilon)>0$ such that $|t|\leq \beta_{\varepsilon, r}(t)\leq |t|+ \varepsilon$ for all $t$. Put $\widetilde{\theta}_{\varepsilon}(t)=\beta_{\varepsilon, r(\varepsilon)}(t)$, and define $\widetilde{M}_{\varepsilon}:\R^2\to\R$ by
$$
\widetilde{M}_{\varepsilon}(x,y)=\frac{x+y+\widetilde{\theta}_{\varepsilon}(x-y)}{2}.
$$
It is clear that $\widetilde{M}_{\varepsilon}$ satisfies all the properties of Lemma \ref{smooth maxima} except for $(iii)$.

Now let us prove our lemma. Up to an affine change of variables in $\R^{d+1}$, the problem is
equivalent to showing that the function
    $$
f(x)=\max\{ 0, x_1, x_2, ..., x_{d}\}
    $$
can be uniformly approximated on $\R^n$ by $C^{\infty}$ strongly
convex functions. We will show that this is possible by induction
on $d$.

For $d=1$, the function $f(x)=\max\{x,0\}$ is Lipschitz,
so the convolutions $f_{\varepsilon}=f*H_{\varepsilon}$  are $C^\infty$, Lipschitz and converge
to $f$, uniformly on $\R$, as $\varepsilon\searrow 0$. Besides, as
one can easily compute,
    $$
f''_{\varepsilon}(x)=\frac{1}{(4\pi\varepsilon)^{1/2}}
e^{-\frac{x^2}{4\varepsilon}}>0,
    $$
so the $f_{\varepsilon}$ are strongly convex.

Now, suppose that the function $f(x_{1}, ..., x_{k})=\max\{0,
x_1,...,x_k\}$ can be uniformly approximated by $C^\infty$ smooth
strongly convex functions on $\R^k$. Then, for a given
$\varepsilon>0$ we can find $C^\infty$ strongly convex functions
$g:\R^k\to\R$ and $\alpha:\R\to\R$ such that
    $$
f(x)\leq g(x)\leq f(x)+\varepsilon \,\,\, \textrm{ for all }
\,\,\, x\in\R^k, \,\,\, \textrm{ and } \,\,\,
    $$
    $$
\max\{t,0\}\leq\alpha(t)\leq\max\{t,0\}+\varepsilon \,\,\,
\textrm{for all }\,\,\, t\in\R.
    $$
Given the function $$F(x_1..., x_k, x_{k+1})=\max\{0, x_1, ...,
x_{k+1}\}=\max\{x_{k+1}, \, f(x_1, ..., x_k)\},$$ let us define
$G:\R^{k+1}\to\R$ by
    $$
G(x_1, ..., x_{k+1})=\widetilde{M}_{\varepsilon}\left( g(x_1, ..., x_k),
\alpha(x_{k+1})\right).
    $$
We have $G\in
C^{\infty}(\R^{k+1})$, and $F(x)\leq G(x)\leq F(x)+2\varepsilon$
for all $x\in \R^{k+1}$, so in order to conclude the proof we only
have to see that $G$ is strongly convex. Given $x, v\in\R^{k+1}$ with $v\neq 0$,
it is enough to check that the function
    $$
h(t):=G(x+tv)=\widetilde{M}_{\varepsilon}(\beta(t), \gamma(t)),
    $$
where $\beta(t)=g(x_1 +tv_1, ..., x_k+tv_k)$ and
$\gamma(t)=\alpha(x_{k+1} + tv_{k+1})$, satisfies $h''(t)>0$.
If $v_{k+1}\neq 0$ and $(v_{1}, ..., v_{k})\neq 0$ then,
since $g$ is strongly convex on $\R^k$ and $\alpha$ is strongly
convex on $\R$, we have $\beta''(t)>0$ and $\gamma''(t)>0$, so exactly as in the proof of $(9)$
of Proposition \ref{properties of M(f,g)} we also get
$h''(t)>0$. On the other hand, if for instance we have $v_{k+1}=0$ then $\beta''(t)>0$ and $\gamma'(t)=\gamma''(t)=0$, so
\begin{eqnarray*}
\frac{d^2}{dt^2}\widetilde{M}_{\varepsilon}(\beta(t), \gamma(t))=\frac{ \left(1+\theta_{\varepsilon}'(\beta(t)-\gamma(t))\right) \beta''(t) + \theta_{\varepsilon}''(\beta(t)-\gamma(t)) \left( \beta'(t)-\gamma'(t)\right)^{2}}{2}>0,
\end{eqnarray*}
because $|\widetilde{\theta}_{\varepsilon}'|< 1$ and $\widetilde{\theta}_{\varepsilon}''>0$. Similarly one checks that $\frac{d^2}{dt^2}\widetilde{M}_{\varepsilon}(\beta(t), \gamma(t))>0$ in the case when $(v_1,...,v_k)=0\neq v_{k+1}$.
\end{proof}

\begin{lemma}\label{reduction to Rk with k less than n}
Let $U\subseteq\R^n$ be open and convex, $f:U\to\R$ be a $C^p$ convex function, and $x_0\in U$.
Assume that $f$ is not supported at $x_0$ by any
$(n+1)$-dimensional corner function. Then there exist $k<n$, a
linear projection $P:\R^n\to\R^k$, a $C^p$ convex function
$c:P(U)\subseteq\R^k\to\R$, and a linear function $\ell:\R^n\to\R$ such that
$f=c\circ P+\ell$.
\end{lemma}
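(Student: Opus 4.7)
My plan is to study the set
\[
A:=\{\xi\in(\R^{n})^{*} : \exists\, b\in\R,\ \xi+b\leq f\text{ on }U\}
\]
of linear parts of affine minorants of $f$, deduce from the hypothesis that $\dim A\leq n-1$, and then extract a direction $v_{0}\in\R^{n}\setminus\{0\}$ along which $f$ is globally affine; this will give the required decomposition $f=c\circ P+\ell$.

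\medskip

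First I would observe that $\partial f(x_{0})$ is nonempty (automatic at an interior point for a convex function) and contained in $A$, via the tautological minorant $\xi+(f(x_{0})-\xi(x_{0}))\leq f$ for $\xi\in\partial f(x_{0})$. To show $\dim A\leq n-1$, I argue by contradiction. If $\dim A=n$, pick $\xi_{1}\in\partial f(x_{0})$; since $A-\xi_{1}$ contains $0$ and has affine dimension $n$, its linear span is all of $(\R^{n})^{*}$, so I can choose linearly independent $w_{1},\dots,w_{n}\in A-\xi_{1}$. Then $\xi_{1},\,\xi_{2}:=\xi_{1}+w_{1},\,\dots,\,\xi_{n+1}:=\xi_{1}+w_{n}$ form an affinely independent $(n+1)$-tuple in $A$, which is equivalent to linear independence in $(\R^{n+1})^{*}$ of the maps $L_{j}(x,x_{n+1}):=x_{n+1}-\xi_{j}(x)$. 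Setting $b_{1}:=f(x_{0})-\xi_{1}(x_{0})$ and picking, for each $j\geq 2$, some $b_{j}\in\R$ with $\xi_{j}+b_{j}\leq f$ on $U$ (possible since $\xi_{j}\in A$), the function $C:=\max_{1\leq j\leq n+1}(\xi_{j}+b_{j})$ would be an $(n+1)$-dimensional corner with $C\leq f$ on $U$ and $C(x_{0})=\xi_{1}(x_{0})+b_{1}=f(x_{0})$; this contradicts the hypothesis.

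\medskip

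Having $\dim A\leq n-1$, I pick $v_{0}\in\R^{n}\setminus\{0\}$ and $\alpha\in\R$ with $A\subseteq\{\xi:\xi(v_{0})=\alpha\}$, fix $\xi_{0}\in\partial f(x_{0})\subseteq A$ (so that $\xi_{0}(v_{0})=\alpha$), and set $\ell:=\xi_{0}$ and $g:=f-\ell$. Then $g$ is $C^{p}$ and convex on $U$, and for every $y\in U$,
\[
\partial g(y)=\partial f(y)-\xi_{0}\subseteq A-\xi_{0}\subseteq\{\eta\in(\R^{n})^{*}:\eta(v_{0})=0\}.
\]
For fixed $y\in U$, the convex function $h(t):=g(y+tv_{0})$ on the open interval $I_{y}:=\{t\in\R:y+tv_{0}\in U\}$ then admits $0$ as a subgradient at every point (from any $\eta\in\partial g(y+t^{*}v_{0})$, the chain rule along the line gives $\eta(v_{0})\in\partial h(t^{*})$, and $\eta(v_{0})=0$), which forces $h$ to be constant on $I_{y}$. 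Hence $g$ is constant along every line in direction $v_{0}$ intersected with $U$, so $g$ descends to a $C^{p}$ convex function $c:P(U)\to\R$ through any linear projection $P:\R^{n}\to\R^{n-1}$ with $\ker P=\langle v_{0}\rangle$; choosing coordinates in which $v_{0}=e_{n}$ and $P$ is the projection onto the first $n-1$ coordinates makes the regularity transparent. The identity $f=c\circ P+\ell$ then holds with $k=n-1<n$, as required.

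\medskip

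The main obstacle I anticipate is the hypothesis-translation step: I need to check carefully that affine independence of the constructed $(n+1)$-tuple $(\xi_{j})$ in $(\R^{n})^{*}$ matches the linear independence of the corresponding $L_{j}\in(\R^{n+1})^{*}$ required in the corner definition, and that starting from a single touching subgradient $\xi_{1}\in\partial f(x_{0})$ and adjoining $n$ extra merely-minorant slopes automatically yields a corner whose maximum equals $f(x_{0})$ at $x_{0}$. Once this combinatorial bookkeeping is settled, the subdifferential argument producing the direction of affinity and the factoring through $\R^{n-1}$ proceeds without further complications.
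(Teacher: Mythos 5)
Your argument is correct, but it follows a noticeably different route from the paper's. The paper fixes a supporting corner at $x_0$ of \emph{maximal} dimension $m$ (after disposing of the affine case), and then uses differentiability of $f$ plus maximality to show that $f-\ell_1$ is constant along the $(n-k)$-dimensional common kernel of the differences $\ell_j-\ell_1$; this yields the smallest possible $k=m-1$ and an explicit projection onto a complementary subspace. You instead work with the global slope set $A$ of affine minorants: the hypothesis forces its affine dimension below $n$ (otherwise $\xi_1\in\partial f(x_0)$ together with $n$ further slopes $\xi_1+w_j\in A$, $w_j$ linearly independent, gives a supporting $(n+1)$-corner), hence $A$ sits in an affine hyperplane $\{\xi:\xi(v_0)=\alpha\}$, and since $\partial f(y)\subseteq A$ for every $y\in U$, the function $g=f-\xi_0$ has $0$ as a subgradient along every line in direction $v_0$, so it is constant there and factors through a projection with kernel $\langle v_0\rangle$. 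The bookkeeping point you flagged is indeed fine: a relation $\sum_j c_jL_j=0$ in $(\R^{n+1})^*$ forces $\sum_j c_j=0$ (coefficient of $x_{n+1}$) and $\sum_j c_j\xi_j=0$, so linear independence of the $L_j$ is exactly affine independence of the $\xi_j$, and your construction of the corner, with $b_1=f(x_0)-\xi_1(x_0)$ and the other $b_j$ mere minorant constants, does touch $f$ at $x_0$. What your approach buys: it never uses differentiability (subgradients suffice), so it covers verbatim the nonsmooth case that the paper only handles in a later remark via Clarke's result, and it avoids the induction on the maximal corner dimension. What it gives up: you always land on $k=n-1$ (one direction of affinity), whereas the paper extracts the minimal $k$; this extra precision is not needed for the lemma as stated nor for its applications (the induction in Theorem \ref{uniform approximation of convex by real analytic convex} and Claim 4 of Section 6 only require some $k<n$), so your proof is a valid and in some respects more general substitute. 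If you write it up, do spell out the well-definedness of $c$ (two points of $U$ with the same projection lie on a segment in $U$ parallel to $v_0$, on which $g$ is constant) and the local argument giving $c\in C^p(P(U))$.
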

\begin{proof}
If $f$ is affine the result is obvious. If $f$ is not affine then
there exists $y_0\in U$ with $f'(x_0)\neq f'(y_0)$. It is clear
that $L_1(x, x_{n+1})=x_{n+1}-f'(x_0)(x)$ and $L_2(x,
x_{n+1})=x_{n+1}-f'(y_0)(x)$ are two linearly independent linear
functions on $\R^{n+1}$, hence $f$ is supported at $x_0$ by the
two-dimensional corner $x\mapsto \max\{f(x_0)+f'(x_0)(x-x_0),
f(y_0)+f'(y_0)(x-y_0)\}$. Let us define $m$ as the greatest
integer number so that $f$ is supported at $x_0$ by an
$m$-dimensional corner. By assumption we have $2\leq m<n+1$.
Define $k=m-1$. There exist $\ell_1, ..., \ell_{k+1}\in
(\R^n)^{*}$ with $L_{j}(x, x_{n+1})=x_{n+1}-\ell_j(x)$, $j=1, ...,
k+1$, linearly independent in $(\R^{n+1})^{*}$, and $b_{1}, ...,
b_{k+1}\in\R$, so that $C=\max_{1\leq j\leq k+1}\{\ell_j +b_j\}$
supports $f$ at $x_0$.

Observe that the $\{L_{j}-L_1\}_{j=2}^{k+1}$ are linearly
independent in $(\R^{n+1})^{*}$, hence so are the
$\{\ell_{j}-\ell_1\}_{j=2}^{k+1}$ in $(\R^n)^{*}$, and therefore
$\bigcap_{j=2}^{k+1}\textrm{Ker}\, (\ell_{j}-\ell_1)$ has
dimension $n-k$. Then we can find linearly independent vectors
$w_1, ..., w_{n-k}$ such that $\bigcap_{j=2}^{k+1}\textrm{Ker}\,
(\ell_{j}-\ell_1)=\textrm{span}\{w_1, ..., w_{n-k}\}$.

Now, given any $y\in U$, if $\frac{d}{dt} (f-\ell_1)(y+t
w_q)|_{t=t_{0}}\neq 0$ for some $t_0$ then
$f'(y+t_0 w_q)-\ell_1$ is linearly independent with
$\{\ell_{j}-\ell_1\}_{j=2}^{k+1}$, which implies that $(x,
x_{n+1})\mapsto x_{n+1}-f'(y+t_0w_q)$ is linearly independent with
$L_1, ..., L_{k+1}$, and therefore the function
$$x\mapsto \max\{\ell_1(x)+ b_1, ..., \ell_{k+1}(x)+b_{k+1},
f'(y+t_0 w_q)(x-y-t_0 w_q)+f(y+t_0 w_q)\}$$ is a $(k+2)$-dimensional corner supporting $f$ at $x_0$, which
contradicts the choice of $m$. Therefore we must have
    $$
\frac{d}{dt}(f-\ell_1)(y+tw_q)=0 \,\,\,
\textrm{ for all } \, y\in U, t\in\R \, \textrm{ with } \, y+tw_q\in U,  \, q=1, ..., n-k.
    $$
This implies that
$$
(f-\ell_1)(y+\sum_{j=1}^{n-k}t_{j}w_j)=(f-\ell_1)(y)
$$ if $y\in U$ and $y+\sum_{j=1}^{n-k}t_{j}w_j\in U$.
Let $Q$ be the orthogonal projection of $\R^n$ onto the subspace
$E:=\textrm{span}\{w_1, ..., w_{n-k}\}^{\bot}$. For each $z\in Q(U)$ we may define
$$
\widetilde{c}(z)=(f-\ell_1)(z+\sum_{j=1}^{n-k}t_j w_j)
$$
if $z+\sum_{j=1}^{n-k}t_j w_j \in U$ for some $t_1, ..., t_{n-k}$. It is clear that $\widetilde{c}:Q(U)\to\R$ is well defined,
convex and $C^p$, and satisfies
$$
f-\ell_1=\widetilde{c}\circ Q.
$$
Then, by taking a linear
isomorphism $T:E\to\R^k$ and setting $P=TQ$, we have that
$f=c\circ P+\ell_1$, where $c=\widetilde{c}\circ T^{-1}$ is defined on $P(U)$.
\end{proof}

Now we can prove Theorem \ref{uniform approximation of convex by
real analytic convex}. We already know that a convex function
$f:U\subseteq\R\to\R$ can be uniformly approximated from below by
$C^1$ functions, so we may assume that $f\in C^{1}(U)$.
We will proceed by induction on $d$, the dimension of $\R^d$.

For $d=1$ the result can be proved as follows. Either
$f:U\to\R$ is affine (in which case we are done) or $f$
can be supported by a $2$-dimensional corner at every point $x\in
U$. In the latter case, let us consider a compact interval
$I\subset U$. Given $\varepsilon>0$, since $f$ is convex and
Lipschitz on $I$ we can find finitely many affine functions $h_1,
..., h_m:\R\to\R$ such that each $h_j$ supports $f-\varepsilon$ at
some point $x_j\in I$ and $f-2\varepsilon\leq \max\{h_1, ...,
h_m\}$ on $I$. By convexity we also have $\max\{h_1, ..., h_{m}\}\leq f-\varepsilon$
on all of $U$. For each $x_j$ we may find a $2$-dimensional corner
$C_j$ which supports $f-\varepsilon$ at $x_j$. Since $f$ is
differentiable and convex we have $h_j=C_j$ on a neighbourhood of
$x_j$ and, by convexity, also $h_j\leq C_j\leq f-\varepsilon$ and
$\max\{C_1, ..., C_m\}\leq f-\varepsilon$ on $U$. And we also have
$f-2\varepsilon\leq \max\{h_1,..., h_m\}\leq\max\{C_1, ...,
C_{m}\}\leq f-\varepsilon$ on $I$. Now apply Lemma \ref{strongly
convex approximation of corners} to find $C^{\infty}$ strongly
convex functions $g_{1}, ..., g_{m}:\R\to\R$ such that $C_j\leq
g_j\leq C_j +\varepsilon'$, where $\varepsilon':=\varepsilon/2m$,
and define $g:\R\to\R$ by
$$g=M_{\varepsilon'}(g_1, M_{\varepsilon'}(g_2,
M_{\varepsilon'}(g_3, ..., M_{\varepsilon'}(g_{m-1}, g_m))...))$$
(for instance, if $m=3$ then $g=M_{\varepsilon'}(g_1,
M_{\varepsilon'}(g_2, g_3))$). By Proposition \ref{properties of
M(f,g)} we have that $g\in C^{\infty}(\R)$ is strongly convex,
    $$
\max\{C_1, ..., C_m\}\leq g\leq \max\{C_1, ...,
C_m\}+m\varepsilon'\leq f-\frac{\varepsilon}{2} \,\,\, \textrm{ on
} \,\,\, U,
    $$
and
    $$
f-2\varepsilon\leq \max\{C_1, ..., C_m\}\leq g \,\,\, \textrm{ on
} \,\,\, I.
    $$
Therefore $f:U\subseteq\R\to\R$ can be approximated from below by $C^\infty$
strongly convex functions, uniformly on compact subintervals of
$U$. By Theorem \ref{main theorem} and Remark \ref{the method
preserves strong convexity etc} we conclude that, given
$\varepsilon>0$ we may find a $C^\infty$ strongly convex function
$h$ such that $f-2\varepsilon\leq h\leq f-\varepsilon$ on $U$.

Finally, set $\eta(x)=\frac{1}{2}\min\{h''(x), \varepsilon\}$ for
every $x\in U$. The function $\eta:U\to (0, \infty)$ is
continuous, so we can apply Whitney's theorem on $C^2$-fine
approximation of $C^2$ functions by real analytic functions to find a real analytic function $g:U\to\R$ such
that
    $$
\max\{|h-g|, |h'-g'|, |h''-g''|\}\leq \eta.
    $$
This implies that $f-3\varepsilon\leq g\leq f$ and
$g''\geq\frac{1}{2}h''>0$, so $g$ is strongly convex as well.

\medskip

Now assume the result is true in $\R, \R^2, ..., \R^{d}$, and let
us see that then it is also true in $\R^{d+1}$. If there is some
$x_0\in U$ such that $f:U\subseteq\R^{d+1}\to\R$ is not supported
at $x_0$ by any $(d+2)$-dimensional corner function then,
according to Lemma \ref{reduction to Rk with k less than n}, we
can find $k\leq d$, a linear projection $P:\R^{d+1}\to\R^k$, a
linear function $\ell:\R^{d+1}\to\R$, and a $C^\infty$ convex
function $c:P(U)\to\R$ such that $f=c\circ P+\ell$. By assumption
there exists a real analytic convex function $h:P(U)\subseteq\R^k\to\R$ so that
$c-\varepsilon\leq h\leq c$. Then the function $g=h\circ P+\ell$
is real analytic, convex (though never strongly convex), and
satisfies $f-\varepsilon\leq g\leq f$.

If there is no such $x_0$ then one can repeat exactly the same
argument as in the case $d=1$, just replacing $2$-dimensional
corners with $(d+2)$-dimensional corners, the interval $I$ with a compact
convex body $K\subset U$, and $\eta$ with
    $$
\eta(x)=\frac{1}{2}\min\{ \varepsilon, \, \min\{D^{2}h(x)(v)^2 \,
: v\in \R^{d+1}, \|v\|=1\}\},
    $$
in order to conclude that there exists a real analytic strongly
convex $g:U\to\R$ such that $f-\varepsilon\leq g\leq f$ on $U$.
\,\,\, $\Box$

Incidentally, the above argument also shows Proposition
\ref{characterization of functions that cannot be approximated by
strongly convex functions} in the case when $f$ is $C^1$. In the
general case of a nonsmooth convex function one just needs to take
two more facts into account. First, Lemma \ref{reduction to Rk
with k less than n} holds for nonsmooth convex functions (to see
this, use the fact that if the range of the subdifferential of a
convex function is contained in $\{0\}$ then the function is
constant, see for instance \cite[Chapter 1, Corollary
2.7]{Clarke}, and apply this to the function $(t_1,...,
t_{d-k})\mapsto (f-\ell_1)(y+\sum_{j=1}^{d-k}t_j w_j)$). Second, in the
above proof one can use Rademacher's theorem and uniform
continuity of $f$ to see that the $x_j$ can be assumed to be
points of differentiability of $f$.

\section{$C^0$-fine approximation of general convex functions is impossible: three counterexamples}

We start to discuss the possibility of approximating a convex function $f:\R^d\to\R$ by smooth convex functions in the $C^{0}$-fine topology. We will see that there is quite a big difference between the cases $d=1$ and $d\geq 2$.

In the case $n=1$ we will show that every convex function $f:\R\to\R$ can be approximated by convex real analytic functions in this topology.
However, this approximation cannot be performed from below:
\begin{example}
Let $f:\R\to\R$ be defined by $f(x)=|x|$. For every $C^1$ convex function $g:\R\to\R$ such that $g(0)\leq 0$ we have $$\liminf_{|x|\to\infty}|f(x)-g(x)|>0.$$
In particular, if $\varepsilon:\R\to (0, \infty)$ is continuous and satisfies $\lim_{|x|\to\infty}\varepsilon(x)=0$ then there is no $C^1$ convex function $g:\R\to\R$ such that $|x|-\varepsilon(x)\leq g(x)\leq |x|$.
\end{example}

In two or more dimensions the situation gets much worse: $C^{0}$-fine approximation of convex functions by $C^1$ convex functions is no longer possible in general.

\begin{example}
For $d\geq 2$, let $f:\R^{d}\to\R$ be defined by $f(x_1, ..., x_d)=|x_1|$, and let $\varepsilon:\R^d\to (0, \infty)$ be continuous with $\lim_{|x|\to\infty}\varepsilon(x)=0$. Then there is no $C^1$ convex function $g:\R^d\to\R$ such that $|f-g|\leq\varepsilon$.
\end{example}

Our last example shows that when $U\neq\R^d$, $d\geq 2$, it is possible to construct convex functions $f:U\to\R$ which cannot be approximated by smoother convex functions in the $C^0$-fine topology, and which are not of the form $f=c\circ P+\ell$ (where $P:\R^d\to\R^k$, $k<d$, $c:P(U)\to\R$ convex and $\ell$ linear).

\begin{example}\label{counterexample to properly convex
approximation} Let $\varphi$ be a $C^p$ strongly convex function on $\R$
which is not $C^{p+1}$ on any neighbourhood of $0$, and let
$\psi:\R\to\R$ be a $C^{\infty}$ function such that $\psi=0$ on
$[-\varepsilon(1+\varepsilon), \varepsilon(1+\varepsilon)]$, and
$\min\{\psi, \psi''\}>0$ on
$\R\setminus[-\varepsilon(1+\varepsilon),
\varepsilon(1+\varepsilon)]$. Let $U=(-1,1)\times
(-1,1)\subset\R^2$, $\varepsilon\in (0,1)$, and define $f:U\to\R$ by
$$
f(x,y)=\varphi(x)+\psi\left(x+\varepsilon y\right)
+\psi\left(x-\varepsilon y\right).
$$
Notice that $f$ is strongly convex outside the set
$C_{\varepsilon}=\{(x,y)\in U \, : \,
-\varepsilon(1+\varepsilon)\leq x+\varepsilon
y\leq\varepsilon(1+\varepsilon),\, -\varepsilon(1+\varepsilon)\leq
x-\varepsilon y\leq\varepsilon(1+\varepsilon)\}$, and the measure of
$C_{\varepsilon}$ is less than $2\varepsilon(1+\varepsilon)$. It is
not difficult to see that if $\varepsilon:\R^2\to [0, \infty)$ is a
$C^1$ function with $\varepsilon^{-1}(0)=\R^2\setminus U$ then there
is no convex function $g\in C^{p+1}(U)$ such that $|f-g|\leq
\varepsilon$ on $U$.
\end{example}

\section{$C^{0}$-fine approximation of properly convex functions. A gluing technique for proper functions.}

We start proving Theorem 3. We may write $f=\ell +c$, where $\ell$
is linear and $c$ is convex and proper. Since addition of linear
functions preserves convexity, smoothness, and the kind of
approximation we are dealing with, in order to prove our result we
may assume that $\ell=0$, and in particular that $f:U\to [a, b)$ is
proper and attains a minimum at some point $x_0\in U$ with
$f(x_0)=a$.

For every $n\in\N$ let us define
$$
B_{n}=f^{-1}[a, \beta_n),
$$
where $(\beta_n)$ is a strictly increasing sequence of real numbers
converging to $b$. Each $\overline{B_n}=f^{-1}[a, \beta_n]$ is a compact convex body with interior $B_n$, and we have
$$
U=\bigcup_{n=1}^{\infty}B_n, \textrm{ and } \overline{B_n}\subset
B_{n+1} \textrm{ for all } n. \eqno(1)
$$
We also have
$$
\widetilde{\alpha}:=\inf_{U\setminus B_1}f-f(x_0)=\beta_1-a>0. \eqno(2)
$$
For each $v\in\R^d$ with $\|v\|=1$ let us consider the function
$\psi(t)=\psi_{x_0, v}(t)=f(x+tv)$. There are unique numbers
$\tau_{n}^{\pm}$ such that
$\tau_{n+1}^{-}<\tau_{n}^{-}<...<\tau_{1}^{-}<0<
\tau_{1}^{+}<...<\tau_{n}^{+}<\tau_{n+1}^{+}$ and
$x_0+\tau_{n}^{\pm}v \in\partial B_{n}$ for all $n$. By convexity of
$\psi$, for every $\eta_{n}^{\pm}\in\partial\psi (\tau_{n}^{\pm})$
we have $\eta_{n+1}^{-}\leq \eta_{n}^{-}\leq...\leq\eta_{1}^{-}\leq
0\leq \eta_{1}^{+}\leq ...\leq\eta_{n}^{+}\leq\eta_{n+1}^{+}$. Then,
for every $\zeta_{n}^{\pm}\in \partial f(x_0+\tau_{n}^{\pm}v)$ we
have that
$\eta_{n}=\zeta_{n}^{\pm}(v)\in\partial\psi(\tau_{n}^{\pm})$ and
therefore
$$
\|\zeta_{n}^{+}\|\geq\zeta_{n}^{+}(v)\geq\eta_{1}^{+}(v)\geq
\frac{\psi(\tau_1^{+})-\psi(0)}{\tau_{1}^{+}}\geq
\frac{\widetilde{\alpha}}{\textrm{diam}(B_1)}:=\alpha>0. \eqno(3)
$$
Since $v$ is an arbitrary unit vector, this shows in particular that
$$
\inf\{ \|\zeta\| \, : \, \zeta\in\partial f(y), y\in\partial B_n,
n\in\N\}\geq\alpha>0. \eqno(4)
$$
(A similar argument shows that if $v$ is a unit vector transversal to $\partial B_n$ at $y\in \partial B_n$ such that $y+tv\in B_n$ for $t>0$ sufficiently small, then the function $t\mapsto f(y+tv)$ is strictly decreasing on an interval $(-\delta, \delta)$, for some $\delta>0$ sufficiently small.)

Next, associated to each $B_n$ we define a function
$f_{n}:\R^d\to\R$ by
$$
f_n(x)=\inf\{ f(y)+L_{n+2}|x-y| \, : \, y\in\overline{B_{n+2}}\},
$$
where $L_{n+2}$ is the Lipschitz constant of $f_{|_{B_{n+2}}}$.
\begin{claim}
The $f_n$ are Lipschitz convex functions on $\R^d$ such that
$$
f_{n}\leq f_{n+1} \, \textrm{ on } \R^d,
$$
$$
f_{n}= f \textrm{ on } B_{n+2}.
$$
Moreover, $\lim_{|x|\to\infty}f_{n}(x)=\infty$, and $f_{n}$ can be
supported by a $(d+1)$-dimensional corner function at every point
$x\in\R^d$.
\end{claim}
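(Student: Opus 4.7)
The plan is to read off all four assertions from the description of $f_{n}$ as an infimum convolution of $f$ restricted to $\overline{B_{n+2}}$ (extended by $+\infty$ outside $\overline{B_{n+2}}$) with the convex norm function $L_{n+2}|\cdot|$. Convexity of $f_{n}$ is then automatic from the fact that the infimum convolution of two convex functions is convex, and the $L_{n+2}$-Lipschitzness follows immediately from the triangle inequality applied inside the infimum.

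The identity $f_{n}=f$ on $B_{n+2}$ I would obtain as a two-sided estimate: the choice $y=x$ gives $f_{n}(x)\leq f(x)$, while the $L_{n+2}$-Lipschitz bound $f(x)-f(y)\leq L_{n+2}|x-y|$ valid for $y\in\overline{B_{n+2}}$ yields $f(y)+L_{n+2}|x-y|\geq f(x)$ and hence $f_{n}(x)\geq f(x)$. Coercivity is equally direct: with $R=\sup_{y\in\overline{B_{n+2}}}|y|$, I would use $|x-y|\geq|x|-R$ together with $f\geq a$ on $U$ to obtain $f_{n}(x)\geq a+L_{n+2}(|x|-R)\to\infty$, noting that $L_{n+2}\geq\alpha>0$ by (4). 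For the corner-support property I would invoke Lemma \ref{reduction to Rk with k less than n} in its contrapositive form: if $f_{n}$ failed to be supported at some point by a $(d+1)$-dimensional corner, then $f_{n}=c\circ P+\ell$ for some $k<d$, a linear projection $P\colon\R^{d}\to\R^{k}$, convex $c$, and linear $\ell$; picking $v\in\ker P\setminus\{0\}$, the ray $t\mapsto f_{n}(x+tv)=f_{n}(x)+t\ell(v)$ would be either constant (if $\ell(v)=0$) or unbounded below as $t\to-\infty$ (if $\ell(v)\neq 0$), in either case contradicting the coercivity just proved.

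The main obstacle is the monotonicity $f_{n}\leq f_{n+1}$ on $\R^{d}$. My approach here is to fix $x$ and show that $f(y)+L_{n+3}|x-y|\geq f_{n}(x)$ for every $y\in\overline{B_{n+3}}$; taking infima over $y$ then gives the desired inequality. When $y\in\overline{B_{n+2}}$ this is immediate from $L_{n+3}\geq L_{n+2}$. When $y\in\overline{B_{n+3}}\setminus\overline{B_{n+2}}$, I would interpolate through the point $y'\in\partial B_{n+2}$ lying on the segment from the minimiser $x_{0}\in B_{1}$ to $y$, and combine convexity of $f$ along that segment with the outward-slope estimates in (3)-(4) to trade the jump $f(y)-f(y')$ against $L_{n+3}|y-y'|$ while controlling $L_{n+2}|x-y'|$ by $L_{n+3}|x-y|$ via the triangle inequality. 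This interpolation step, which ultimately rests on how the nested sublevel-set geometry forces the Lipschitz constants $L_{m}$ to grow consistently with the bodies $B_{m}$, is where I anticipate spending the most effort, and may impose constraints on the growth of the sequence $(\beta_{n})$ defining the $B_{n}$.
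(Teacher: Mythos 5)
Most of your proposal is correct and stays close to the paper's own treatment. Convexity and $L_{n+2}$-Lipschitzness of the inf-convolution, and the two-sided estimate giving $f_n=f$ on $B_{n+2}$, are exactly the standard facts the paper invokes as well known. Your coercivity argument is in fact simpler than the paper's: since the infimum is over the compact set $\overline{B_{n+2}}$ and $f\geq a$ there, $f_n(x)\geq a+L_{n+2}(|x|-R)$, and $L_{n+2}\geq\alpha>0$ (a subgradient of norm at least $\alpha$ at a point of $\partial B_1\subset B_{n+2}$ forces the Lipschitz constant on $B_{n+2}$ to be at least $\alpha$); the paper instead bounds $f_n$ from below along rays from $x_0$ using the slope estimate $(3)$. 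Your corner-support argument is identical to the paper's (contrapositive of Lemma \ref{reduction to Rk with k less than n} plus coercivity); note only that $f_n$ is merely Lipschitz convex, so you need the nonsmooth version of that lemma, which the paper records in the paragraph following the proof of Theorem \ref{uniform approximation of convex by real analytic convex}.

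The genuine gap is the monotonicity $f_n\leq f_{n+1}$, exactly where you anticipated trouble. After your triangle-inequality reduction, your Case 2 requires $f(y)-f(y')\geq L_{n+2}|y-y'|$ for $y\in\overline{B_{n+3}}\setminus\overline{B_{n+2}}$, where $y'$ is the exit point of the segment $[x_0,y]$ through $\partial B_{n+2}$; but $L_{n+2}$ is the maximal slope of $f$ over all of $B_{n+2}$, while the growth of $f$ beyond $y'$ in that particular direction can be as small as $\alpha$, so this inequality fails in general. In fact no repair along these lines, and no constraint on the sequence $(\beta_n)$, can work, because with $f_n$ defined as an infimum over $\overline{B_{n+2}}$ the inequality $f_n\leq f_{n+1}$ is simply false: take $U=\R$, $f(x)=\max\{-10x,x\}$, so that $\overline{B_m}=[-\beta_m/10,\beta_m]$ and $L_m=10$ for every $m$; for $\beta_{n+2}<x<\beta_{n+3}$ one computes $f_n(x)=10x-9\beta_{n+2}>x=f(x)=f_{n+1}(x)$, since such $x$ lie in $B_{n+3}$. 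So the paper's remark that this inequality is ``easy to check'' is itself unjustified as stated, and the natural repair is to change the definition rather than the proof: with the Lipschitz regularization over all of $U$, namely $f_n(x)=\inf\{f(y)+L_{n+2}|x-y| : y\in U\}$, the family is nondecreasing in $n$ (it is nondecreasing in the Lipschitz constant), one still has $f_n=f$ on $B_{n+2}$ (subgradients there have norm at most $L_{n+2}$), $f_n\leq f$ on $U$, and coercivity follows from the estimate $f(y)\geq a+\alpha\left(|y-x_0|-\mathrm{diam}\,B_1\right)$, which is the paper's $(3)$. As written, however, your interpolation step cannot be completed, so the monotonicity part of the claim remains unproved in your proposal.
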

\begin{proof}
It is a well known fact that $f_n$ is an $L_{n+2}$-Lipschitz convex extension of $f_{|_{B_{n+2}}}$ to all of $\R^d$, and it is easy to check that
$f_{n}\leq f_{n+1}$ for all $n$. Let us check that
$\lim_{|x|\to\infty}f_n(x)=\infty$. For every
$y\in\partial B_1$, there exists a unique unit vector $v=v_y$ such
that the ray $x_0 +tv, t>0$ intersects $\partial B_1$ at a unique
time $\tau_y$. Necessarily, $\tau_y\leq\textrm{diam}(B_1)$.
According to $(3)$ above, we have $\zeta_y(v)\geq\alpha$. Write
$x=x_{t,v}=x_0+tv$. By convexity of $f_n$ we have
\begin{eqnarray*}
& &f_n(x)=f_n(x_0+tv)\geq f_n(y)+(t-\tau_y)\zeta_y(v)=f(y)+(t-\tau_y)\zeta_y(v)\geq\\
& & f(y)+\alpha(t-\tau_y)\geq a
+\alpha(|x-x_0|-\textrm{diam}(B_1)),
\end{eqnarray*}
hence $f_{n}(x)\to\infty$ as $|x|\to\infty$. Finally,
according to Lemma \ref{reduction to Rk with k less than n}, if $f_n$ could not be
supported by a $(d+1)$-dimensional corner function at each
$x\in\R^d$ then we would have $f_{n}=c_n\circ P_n+\ell_n$
for some linear projection $P_n:\R^d\to\R^{k_n}$, $k_{n}<d$, $c_n:P_n(U)\to\R$
convex, and $\ell_n:\R^d\to\R$ linear. But this is impossible, since
for $y\in\textrm{Ker}P_n\setminus\{0\}$ we have
$c_n(P_n(ty))+\ell_n(ty)=c_n(0)+t\ell_n(y)$, which does not go to
$\infty$ as $|t|\to\infty$.
\end{proof}

Now, given a continuous function $\varepsilon:U\to (0, \infty)$,
define $$\varepsilon_n=\frac{1}{6}\min\{\varepsilon(x) :
x\in\overline{B_{n+1}}\}.$$ Associated to each $B_n$, and for every
number $r_n\in (0,1)$ let us also define functions
$\widetilde{f}_{n}=\widetilde{f}_{n, r_n}$ by
$$
\widetilde{f}_{n}(x)= (1-r_n)(f_n(x)-\beta_n) +\beta_n.
$$
\begin{claim}
The functions
$\widetilde{f}_{n}=\widetilde{f}_{n, r_n}$ are
convex and Lipschitz, and the $r_n$ can be chosen small enough so as
to have
$$
f_n < \widetilde{f}_{n} <f_n+\varepsilon_n \, \textrm{ on
} B_n,
$$
$$
f_n = \widetilde{f}_{n} \, \textrm{ on } \partial B_n,
$$
$$
\widetilde{f}_{n}<f_n \, \textrm{ on } \R^d\setminus\overline{B_n},
\, \textrm{ and }
$$
$$
f_n-\varepsilon_n<\widetilde{f}_{n}<f_n \, \textrm{ on }
B_{n+1}\setminus \overline{B_n}.
$$
Moreover, $\lim_{|x|\to\infty}\widetilde{f}_{n}(x)=\infty$, and
$\widetilde{f}_n$ can be supported by a $(d+1)$-dimensional corner
function at every point $x\in\R^d$.
\end{claim}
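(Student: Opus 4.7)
Since $\widetilde{f}_n(x) = (1-r_n)f_n(x) + r_n\beta_n$ is a positive combination of the convex, $L_{n+2}$-Lipschitz function $f_n$ and a constant, it is immediately convex and $(1-r_n)L_{n+2}$-Lipschitz. The key identity is
$$
\widetilde{f}_n(x)-f_n(x) = r_n\bigl(\beta_n - f_n(x)\bigr),
$$
so all the order and quantitative assertions will reduce to understanding where $f_n$ lies relative to $\beta_n$, and how large $|f_n-\beta_n|$ is on the relevant sets.

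The main geometric step is to identify the sublevel set: I will show that $\{f_n\leq\beta_n\}=\overline{B_n}$. One inclusion is easy, since on $B_{n+2}$ we have $f_n=f$ and $\overline{B_n}=\{f\leq\beta_n\}\subset B_{n+2}$. For the reverse inclusion, suppose some $x\notin B_{n+2}$ satisfies $f_n(x)\leq\beta_n$. The sublevel set $\{f_n\leq\beta_n\}$ is convex and contains $\overline{B_n}$, so the segment from $x$ to any point of $\overline{B_n}$ lies in $\{f_n\leq\beta_n\}$. That segment meets $\partial B_{n+2}$, where $f_n=f=\beta_{n+2}>\beta_n$, a contradiction. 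Hence $f_n=\beta_n$ on $\partial B_n$, $f_n<\beta_n$ on $B_n$, and $f_n>\beta_n$ on $\R^d\setminus\overline{B_n}$. Combined with the identity above, this yields the four equality/inequality statements about $\widetilde{f}_n$ and $f_n$ on $\partial B_n$, $B_n$, $\R^d\setminus\overline{B_n}$, and $B_{n+1}\setminus\overline{B_n}$.

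For the quantitative bounds, I observe that on $B_n$ one has $f_n\geq a$ (since $f_n$ attains its minimum value $a$ inside $B_{n+2}$), so $\widetilde{f}_n-f_n\leq r_n(\beta_n-a)$; on $B_{n+1}\setminus\overline{B_n}$, $f_n=f\in[\beta_n,\beta_{n+1}]$, so $f_n-\widetilde{f}_n=r_n(f_n-\beta_n)\leq r_n(\beta_{n+1}-\beta_n)$. Both upper bounds are $<\varepsilon_n$ provided
$$
r_n \,<\, \frac{\varepsilon_n}{\max\{\beta_n-a,\,\beta_{n+1}-\beta_n\}}.
$$
Any $r_n\in(0,1)$ satisfying this suffices.

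Finally, $\lim_{|x|\to\infty}\widetilde{f}_n(x)=\infty$ is immediate from $\lim f_n(x)=\infty$ and $1-r_n>0$. For the corner-support property, if $C(y)=\max_{j=1,\dots,d+1}\{\ell_j(y)+b_j\}$ supports $f_n$ at $x$ with $L_j(y,y_{d+1})=y_{d+1}-\ell_j(y)$ linearly independent in $(\R^{d+1})^\ast$, then
$$
\widetilde{C}(y):=(1-r_n)C(y)+r_n\beta_n = \max_{j}\bigl\{(1-r_n)\ell_j(y)+(1-r_n)b_j+r_n\beta_n\bigr\}
$$
supports $\widetilde{f}_n$ at $x$, and the associated functionals $\widetilde{L}_j=y_{d+1}-(1-r_n)\ell_j$ satisfy $\sum c_j\widetilde{L}_j=0$ iff $\sum c_j=0$ and $\sum c_j\ell_j=0$ iff $\sum c_jL_j=0$; hence linear independence is preserved. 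The step that needs the most care is the sublevel-set identification $\{f_n\leq\beta_n\}=\overline{B_n}$, as everything else follows from routine algebra once this is in hand.
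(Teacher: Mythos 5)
Your proof is correct, and for the bulk of the claim it is essentially the paper's argument made explicit: the paper simply observes that $f_n<\widetilde{f}_{n,\varepsilon}$ on $B_n$ and $\widetilde{f}_{n,\varepsilon}<f_n$ off $\overline{B_n}$ is ``clear'' and then invokes uniform convergence $\widetilde{f}_{n,\varepsilon}\to f_n$ on compact sets as $\varepsilon\to 0^+$ to choose $r_n$, whereas you spell out the underlying reason via the identity $\widetilde{f}_n-f_n=r_n(\beta_n-f_n)$, the sublevel-set identification $\{f_n\le\beta_n\}=\overline{B_n}$ (your segment argument through $\partial B_{n+2}$, where $f_n=f=\beta_{n+2}$ by continuity of the extension, is a correct justification of the part the paper leaves implicit), and an explicit admissible range for $r_n$ instead of a compactness argument; this buys a quantitative choice of $r_n$ but proves nothing more. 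The one place where you genuinely diverge is the corner-support property: the paper deduces it indirectly, noting $\lim_{|x|\to\infty}\widetilde{f}_n(x)=\infty$ and repeating the argument of the previous claim (if no supporting corner existed, Lemma \ref{reduction to Rk with k less than n} would force $\widetilde{f}_n=c_n\circ P_n+\ell_n$, contradicting properness), while you transfer a supporting corner of $f_n$ directly, checking that $(1-r_n)C+r_n\beta_n$ still supports and that the functionals $\widetilde{L}_j=y_{d+1}-(1-r_n)\ell_j$ remain linearly independent because $\sum_j c_j\widetilde{L}_j=0$ forces $\sum_j c_j=0$ and $\sum_j c_j\ell_j=0$, i.e.\ $\sum_j c_jL_j=0$. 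Your route is more constructive and avoids reusing the reduction lemma, at the cost of relying explicitly on the corner-support conclusion of the previous claim for $f_n$ (which is available at this point), so both arguments are sound.
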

\begin{proof}
For $\varepsilon\in (0,1)$, denote $f_{n, \varepsilon}=(1-\varepsilon)(f_n(x)-\beta_n) +\beta_n$. It is clear that
$f_n<f_{n, \varepsilon}$ on $B_n$ and $f_{n, \varepsilon}<f_n$ on $\R^d\setminus\overline{B_n}$.
Since
$\lim_{\varepsilon\to 0^{+}}\widetilde{f}_{n,
\varepsilon}=f_n$ uniformly on compact subsets of $\R^d$,
we can find $\varepsilon=r_n\in (0,1)$ such that all the inequalities in the statement hold true.
On the other hand, by Claim 4 we get that
$\lim_{|x|\to\infty}\widetilde{f}_n(x)=\infty$, hence, by the same argument as in the proof of Claim 4,
$\widetilde{f}_n$ must also be supported by
$(d+1)$-dimensional corners at each point $x\in\R^d$.
\end{proof}

\begin{claim}
We can find numbers $\{r_n\}\subset (0,1)$ with $r_{n+1}<r_{n}$ for
all $n$, $r_n\searrow 0$ and as in the preceding claim, open convex
sets $A_n, C_n$, an open neighbourhood $\mathcal{N}_n$ of $\partial
A_n$, and numbers $s_n>0$ such that
$$
\overline{A_n}\subset B_n\subset\overline{B_{n}}\subset C_n\subset\overline{C_{n}}\subset A_{n+1},
$$
and the function $\widetilde{f}_{n}=\widetilde{f}_{n, r_n}$ satisfies
$$
\widetilde{f}_{n}+s_n\leq \min\{ f_n , \widetilde{f}_{n+1}\} \,
\textrm{ on } U\setminus C_n,
$$
$$
\widetilde{f}_n-\varepsilon_n\leq f_n\leq
\widetilde{f}_n+\varepsilon_n \, \textrm{ on } C_n, \,
\textrm{ and }
$$
$$
\widetilde{f}_n\geq \widetilde{f}_{n+1}+s_n \geq f+s_n \, \textrm{
on } A_n\cup \mathcal{N}_n.
$$
\end{claim}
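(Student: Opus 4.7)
The plan is to parametrize $A_n, C_n, \mathcal{N}_n$ via sublevel sets of $f$, reduce each of the required inequalities to a concrete algebraic condition linking $r_n$ and $r_{n+1}$, and then pick the $r_n$ inductively, with each $r_{n+1}$ much smaller than $r_n$. First I would fix once and for all numbers $\alpha_n,\gamma_n,\delta_n$ satisfying
$$
a=\beta_0<\alpha_1<\alpha_1+\delta_1<\beta_1<\gamma_1<\alpha_2<\alpha_2+\delta_2<\beta_2<\gamma_2<\cdots,
$$
and set $A_n=f^{-1}[a,\alpha_n)$, $C_n=f^{-1}[a,\gamma_n)$, $\mathcal{N}_n=f^{-1}(\alpha_n-\delta_n,\alpha_n+\delta_n)$. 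By convexity and properness of $f$, these are bounded open convex sets satisfying the required nesting $\overline{A_n}\subset B_n\subset\overline{B_n}\subset C_n\subset\overline{C_n}\subset A_{n+1}$, and $\mathcal{N}_n$ is an open neighbourhood of $\partial A_n\subset f^{-1}(\alpha_n)$; moreover $A_n\cup\mathcal{N}_n=f^{-1}[a,\alpha_n+\delta_n)\subset B_n$.

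The next step is to simplify $f_n$ and $f_{n+1}$ on each relevant region. Claim 4 gives $f_n=f$ on $\overline{B_{n+2}}$, so $f_n=f_{n+1}=f$ on both $A_n\cup\mathcal{N}_n$ and $B_{n+2}\setminus C_n$. On $U\setminus\overline{B_{n+2}}$, where $f_n$ need not coincide with $f$, I would use a one-dimensional convexity argument (along any line through $\overline{B_{n+2}}$, the convex function $f_n$ takes the value $\beta_{n+2}$ at both boundary crossings, so it must exceed $\beta_{n+2}$ outside the chord) to deduce $f_n\geq\beta_{n+2}$ there. Using $\widetilde{f}_n=(1-r_n)f_n+r_n\beta_n$, $\widetilde{f}_{n+1}=(1-r_{n+1})f_{n+1}+r_{n+1}\beta_{n+1}$, and $f_n\leq f_{n+1}$ (Claim 4), each required inequality becomes a concrete algebraic condition on $r_n$ and $r_{n+1}$. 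For example, on $A_n\cup\mathcal{N}_n$,
$$
\widetilde{f}_n-\widetilde{f}_{n+1}=r_n(\beta_n-f)-r_{n+1}(\beta_{n+1}-f)\geq r_n(\beta_n-\alpha_n-\delta_n)-r_{n+1}(\beta_{n+1}-\alpha_n-\delta_n),
$$
positive once $r_{n+1}<r_n(\beta_n-\alpha_n-\delta_n)/(\beta_{n+1}-\alpha_n-\delta_n)$, and on $U\setminus B_{n+2}$,
$$
\widetilde{f}_{n+1}-\widetilde{f}_n\geq r_n(\beta_{n+2}-\beta_n)-r_{n+1}(\beta_{n+2}-\beta_{n+1}),
$$
positive once $r_{n+1}<r_n(\beta_{n+2}-\beta_n)/(\beta_{n+2}-\beta_{n+1})$. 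The remaining lower bounds $r_n(\gamma_n-\beta_n)$ (for $\widetilde{f}_n+s_n\leq f_n$ on $U\setminus C_n$) and $r_{n+1}(\beta_{n+1}-\alpha_n-\delta_n)$ (for $\widetilde{f}_{n+1}\geq f+s_n$ on $A_n\cup\mathcal{N}_n$) involve only a single parameter and are automatically positive.

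Finally I would choose $r_n$ inductively: given $r_n$, chosen small enough for Claim 5 to hold at index $n$, pick $r_{n+1}\in(0,r_n/2)$ small enough for Claim 5 at index $n+1$ and for the two displayed ratios above, and then set $s_n$ to be half the minimum of the four positive lower bounds. The resulting sequence $(r_n)$ is strictly decreasing with $r_n\searrow 0$, and the middle inequality $|f_n-\widetilde{f}_n|\leq\varepsilon_n$ on $C_n$ comes for free from Claim 5 since $C_n\subset B_{n+1}$. The main obstacle will be the region $U\setminus\overline{B_{n+2}}$: there $f_n$ is only a Lipschitz convex extension of $f|_{\overline{B_{n+2}}}$ and may differ from $f$, so without the convexity-based lower bound $f_n\geq\beta_{n+2}$ on this region (together with the monotonicity $f_n\leq f_{n+1}$) the inequality $\widetilde{f}_n+s_n\leq\widetilde{f}_{n+1}$ cannot be translated into a usable condition on $r_n,r_{n+1}$; once that bound is in hand, the remaining estimates are routine.
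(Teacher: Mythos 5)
Your construction is correct, and it is considerably more explicit than the paper's own treatment, which disposes of this claim in a single line (``follows from Claim 5 and a standard compactness argument''). The place where a naive compactness argument would stall is exactly the one you identify: on the unbounded region $U\setminus C_n$, Claim 5 compares $\widetilde f_n$ with $f_n$ but says nothing about $\widetilde f_n$ versus $\widetilde f_{n+1}$, and there is no compactness to invoke; your two extra ingredients --- the lower bound $f_n\ge\beta_{n+2}$ off $\overline{B_{n+2}}$ (for which one should take the line through $x$ and the minimizer $x_0$, so that it genuinely crosses $\partial B_{n+2}$ at two points where $f_n=\beta_{n+2}$), combined with $f_n\le f_{n+1}$ and the affine identity $\widetilde f_n=(1-r_n)f_n+r_n\beta_n$ --- are precisely what converts the required uniform gap into the explicit bound $r_n(\beta_{n+2}-\beta_n)-r_{n+1}(\beta_{n+2}-\beta_{n+1})>0$. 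The sublevel-set choices of $A_n$, $C_n$, $\mathcal{N}_n$ give the nesting, $A_n\cup\mathcal{N}_n\subset B_n$, and the middle inequality (via $C_n\subset B_{n+1}$ and Claim 5) exactly as you say, and your bounds on $A_n\cup\mathcal{N}_n$ are correct because $r_n(\beta_n-s)-r_{n+1}(\beta_{n+1}-s)$ is decreasing in $s$ when $r_{n+1}<r_n$. One case you do not write out: the inequality $\widetilde f_n+s_n\le\widetilde f_{n+1}$ must also be checked on the intermediate region $\overline{B_{n+2}}\setminus C_n$, where $f_n=f_{n+1}=f\in[\gamma_n,\beta_{n+2}]$; there $\widetilde f_{n+1}-\widetilde f_n=r_n(f-\beta_n)-r_{n+1}(f-\beta_{n+1})$ is increasing in the value of $f$ (again since $r_{n+1}<r_n$), hence bounded below by $r_n(\gamma_n-\beta_n)+r_{n+1}(\beta_{n+1}-\gamma_n)\ge r_n(\gamma_n-\beta_n)$, which is already one of the four lower bounds on your list, so your choice of $s_n$ as half their minimum still works --- but the case should be stated. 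With that addition, the inductive choice of $r_{n+1}$ (small enough for Claim 5 at index $n+1$, below $r_n/2$, and below $r_n(\beta_n-\alpha_n-\delta_n)/(\beta_{n+1}-\alpha_n-\delta_n)$) completes a full proof; in effect you have supplied the quantitative content that the paper leaves implicit behind ``standard compactness''.
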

\begin{proof}
This follows from Claim 5 and a standard compactness
argument.
\end{proof}

Now we are ready to construct a $C^\infty$ strongly convex function
$g:U\to\R$ such that $|f(x)-g(x)|\leq\varepsilon(x)$ for every $x\in
U$. We will do this by means of an inductive process. We start
considering the function $\widetilde{f}_1$. According to the proof
of Theorem \ref{uniform approximation of convex by real analytic convex}, because $\widetilde{f}_1$ can be supported by $(d+1)$-dimensional
corner functions at every point, we can find a strongly convex function
$\varphi_1\in C^{\infty}(U)$ such that
$\widetilde{f}_1-\varepsilon'_1\leq\varphi_1\leq \widetilde{f}_1$ on
$U$, where $\varepsilon'_1:=\frac{1}{2}\min\{\varepsilon_1, s_1\}$. Set
$g_1=\varphi_1$.
\begin{claim}\label{first estimation}
We have $|g_1-f|\leq 2\varepsilon_1$ on $C_1$.
\end{claim}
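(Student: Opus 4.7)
The plan is to prove Claim 7 by a straightforward triangle inequality, chaining together three pointwise estimates that hold on $C_1$. The three ingredients are: (a) $f_1 = f$ on $C_1$, (b) $|f_1 - \widetilde{f}_1| \leq \varepsilon_1$ on $C_1$, and (c) $|\widetilde{f}_1 - g_1| \leq \varepsilon'_1 \leq \varepsilon_1/2$ on $U$.

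First I would verify (a). The nested inclusions in Claim 6 give $\overline{C_1} \subset A_2 \subset B_2 \subset \overline{B_2} \subset C_2 \subset A_3 \subset B_3$, so in particular $C_1 \subset B_3$. By Claim 4, $f_1 = f$ on $B_{1+2} = B_3$, so $f_1 = f$ pointwise on $C_1$.

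Next, estimate (b) is exactly the content of the second line in Claim 6 applied with $n=1$: $|\widetilde{f}_1 - f_1| \leq \varepsilon_1$ on $C_1$. Estimate (c) comes from the construction of $g_1 = \varphi_1$ via the proof of Theorem \ref{uniform approximation of convex by real analytic convex}: since $\widetilde{f}_1 - \varepsilon'_1 \leq \varphi_1 \leq \widetilde{f}_1$ on $U$ with $\varepsilon'_1 = \tfrac{1}{2}\min\{\varepsilon_1, s_1\} \leq \varepsilon_1/2$, we have $|g_1 - \widetilde{f}_1| \leq \varepsilon_1/2$ on $U$, and in particular on $C_1$.

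Combining the three estimates on $C_1$:
\[
|g_1 - f| \leq |g_1 - \widetilde{f}_1| + |\widetilde{f}_1 - f_1| + |f_1 - f| \leq \tfrac{\varepsilon_1}{2} + \varepsilon_1 + 0 = \tfrac{3\varepsilon_1}{2} \leq 2\varepsilon_1,
\]
which yields the claim. There is no real obstacle here; the content of the claim is just a bookkeeping check that the approximation error $|g_1 - f|$ on the ``safe region'' $C_1$ (where $f_1$ coincides with $f$) is controlled by the pre-chosen tolerance $\varepsilon_1$, which is precisely why $\varepsilon'_1$ was taken so small in the definition of $g_1$.
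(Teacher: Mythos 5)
Your proof is correct and follows essentially the same route as the paper: both arguments chain the three estimates $|g_1-\widetilde{f}_1|\leq\varepsilon'_1$ (from the construction of $\varphi_1$), $|\widetilde{f}_1-f_1|\leq\varepsilon_1$ on $C_1$ (Claim 6), and $f_1=f$ on $C_1$ (Claim 4 plus the inclusion $C_1\subset B_3$), the only difference being that the paper writes the two one-sided inequalities separately while you use a single triangle inequality, obtaining the slightly sharper constant $\tfrac{3}{2}\varepsilon_1\leq 2\varepsilon_1$.
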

\begin{proof}
On $C_1$, on the one hand, $ g_1\leq \widetilde{f}_1\leq
f_1+\varepsilon_1\leq f+2\varepsilon_1$, and on the other hand, $
g_1\geq \widetilde{f}_1-\varepsilon'_1\geq
f_1-\varepsilon_1-\varepsilon'_1\geq f-2\varepsilon_1$.
\end{proof}

Next, consider $\widetilde{f}_2$, and set
$$
\delta_{2}:=\frac{s_1}{2}, \, \textrm{ and } \,
\varepsilon'_{2}:=\frac{1}{2}\min\{\varepsilon_2, s_2,
\varepsilon'_{1}\}.
$$
As before, we can find a strongly convex function $\varphi_2\in
C^{\infty}(U)$ such that $\widetilde{f}_{2}-\varepsilon'_{2}\leq
\varphi_2\leq \widetilde{f}_{2}$ on $U$. Define $g_2:U\to\R$ by
$$
g_2(x)=\left\{
         \begin{array}{ll}
           g_1(x), & \hbox{ if } x\in A_1 \\
           M_{\delta_2}(g_1(x), \varphi_2(x)), & \hbox{ if } x\in U\setminus A_1,
         \end{array}
       \right.
$$
where $M_{\delta_2}$ is the corresponding smooth maximum defined in Lemma \ref{smooth maxima}.

\begin{claim}
The function $g_2$ is well defined, strongly convex, $C^\infty$, and satisfies
$$
g_2=g_1 \, \textrm{ on } A_1,
$$
$$
|g_2-f|\leq 3\varepsilon_1 \, \textrm{ on } C_1,
$$
$$
g_{2}=\varphi_2 \, \textrm{ on } U\setminus C_1, \, \textrm{ and }
$$
$$
|g_2-f|\leq 2\varepsilon_2 \, \textrm{ on } C_2\setminus C_1.
$$
\end{claim}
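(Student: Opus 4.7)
The strategy is to exploit the gap inequalities from Claim 6: they separate $g_1$ and $\varphi_2$ by at least $s_1 = 2\delta_2$ on complementary regions, so by property (iii) of the smooth maximum $M_{\delta_2}$ in Lemma \ref{smooth maxima}, the quantity $M_{\delta_2}(g_1,\varphi_2)$ collapses to $g_1$ on a neighborhood of $\overline{A_1}$ and to $\varphi_2$ on $U\setminus C_1$, while on the transition zone $C_1\setminus A_1$ it still lies in the $\delta_2/2$-sleeve around $\max\{g_1,\varphi_2\}$ by Proposition \ref{properties of M(f,g)}(v).

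First, on $A_1 \cup \mathcal{N}_1$ I will derive $g_1 - \varphi_2 \geq \delta_2$ by combining $g_1 \geq \widetilde{f}_1 - \varepsilon'_1 \geq \widetilde{f}_1 - s_1/2$ with the third estimate of Claim 6, $\varphi_2 \leq \widetilde{f}_2 \leq \widetilde{f}_1 - s_1$. Property (iii) of $M_{\delta_2}$ then gives $M_{\delta_2}(g_1,\varphi_2) = g_1$ on this set, so the two branches in the piecewise definition of $g_2$ agree on the open neighborhood $\mathcal{N}_1$ of $\partial A_1$, yielding $g_2 \in C^\infty(U)$. Strong convexity follows locally: near any point $g_2$ coincides with either $g_1$ (strongly convex by construction) or $M_{\delta_2}(g_1,\varphi_2)$ (strongly convex by Proposition \ref{properties of M(f,g)}(ix), since both $g_1$ and $\varphi_2$ are). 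The identity $g_2 = g_1$ on $A_1$ is just the definition.

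Symmetrically, on $U\setminus C_1$ I will check $\varphi_2 - g_1 \geq \delta_2$ using the first line of Claim 6, $\widetilde{f}_2 - \widetilde{f}_1 \geq s_1$, together with $\varphi_2 \geq \widetilde{f}_2 - \varepsilon'_2$ and $g_1 \leq \widetilde{f}_1$; since $\varepsilon'_2 \leq \varepsilon'_1/2 \leq s_1/4$ by the very definitions of $\varepsilon'_1$ and $\varepsilon'_2$, the difference is at least $3s_1/4 > \delta_2$, and property (iii) forces $g_2 = \varphi_2$ there. The two quantitative bounds are then mostly bookkeeping. On $C_1 \cap A_1$ the first is inherited directly from Claim \ref{first estimation}, while on $C_1\setminus A_1$ one sandwiches $g_2$ between $\max\{g_1,\varphi_2\}$ and $\max\{g_1,\varphi_2\}+\delta_2/2$ and bounds $g_1, \varphi_2$ by $f\pm O(\varepsilon_1)$ using $|\widetilde{f}_n - f_n|\leq \varepsilon_n$ on $C_n$ (Claim 6) combined with $f_n = f$ on $B_{n+2}\supset C_n$. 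The bound on $C_2 \setminus C_1$ is then the same computation applied to $g_2 = \varphi_2$.

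The main obstacle is ensuring that the $\delta_2/2$ contribution to the upper estimate on $C_1\setminus A_1$ does not exceed $\varepsilon_1$. This is harmless provided the positive numbers $s_n$ produced by Claim 6 are tacitly chosen no larger than $\varepsilon_n$, which is allowed because the gap inequalities of Claim 6 persist when $s_n$ is decreased. All remaining steps reduce to direct arithmetic with the defining inequalities of $g_1$, $\varphi_2$, $\widetilde{f}_1$, and $\widetilde{f}_2$, and the same pattern will underpin the inductive step producing $g_{n+1}$ from $g_n$ and $\varphi_{n+1}$.
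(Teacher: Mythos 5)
Your proposal is correct and follows essentially the same route as the paper: the same gap inequalities from Claim 6 combined with property (iii) of $M_{\delta_2}$ to get $g_2=g_1$ near $\overline{A_1}$ and $g_2=\varphi_2$ off $C_1$, and the same bookkeeping with $\widetilde{f}_1,\widetilde{f}_2,\varepsilon'_1,\varepsilon'_2$ for the two estimates. Your worry about $\delta_2$ versus $\varepsilon_1$ is legitimate but needs no extra hypothesis: the third inequality of Claim 6 together with $f_n<\widetilde{f}_n<f_n+\varepsilon_n$ on $B_n$ forces $s_n<\varepsilon_n$ automatically, which is exactly what the paper implicitly uses in the bound $\max\{f+2\varepsilon_1,f+\varepsilon_2\}+\delta_2\leq f+3\varepsilon_1$.
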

\begin{proof}
Let $x\in\mathcal{N}_1$, then we have
\begin{eqnarray*}
g_1(x)=\varphi_1(x)\geq \widetilde{f}_{1}(x)-\varepsilon'_1 \geq
f(x)+s_1-s_1/2 =f_2(x)+\delta_2\geq\varphi_2(x)+\delta_2,
\end{eqnarray*}
hence $M_{\delta_2}(g_1(x), \varphi_2(x))=g_1(x)$. Using
Proposition \ref{properties of M(f,g)} this implies that $g_2$ is well defined,
convex and $C^\infty$. By definition $g_2=g_1$ on $A_1$. Let us see
that $g_{2}=\varphi_2 \, \textrm{ on } U\setminus C_1$. If $x\in
U\setminus C_1$,
\begin{eqnarray*}
g_1(x)=\varphi_1(x)\leq \widetilde{f}_1(x)\leq
\widetilde{f}_2(x)-s_1\leq
\varphi_2(x)+\varepsilon'_2-s_1\leq\varphi_2(x)-\delta_2
\end{eqnarray*}
hence $M_{\delta_2}(g_1(x), \varphi_2(x))=\varphi_2(x)$.

Let us now see that $|g_2-f|\leq 3\varepsilon_1$ on $C_1$. On the one hand we have, for every $x\in C_1$,
$$
g_2(x)\leq\max\{ g_1(x), \varphi_2(x)\}+\delta_2\leq \max\{f+2\varepsilon_1, f+\varepsilon_2\}+\delta_2\leq f+3\varepsilon_1,
$$
and on the other hand $g_2(x)\geq\max\{g_1(x), \varphi_2(x)\}\geq f(x)-2\varepsilon_1$.

Finally, on $C_2\setminus C_1$ we have $g_2=\varphi_2$ so, as in
Claim 7, we get $|g_2-f|\leq 2\varepsilon_2$ on $C_2\setminus C_1$.
\end{proof}
Now consider $\widetilde{f}_{3}$ and put
$$
\delta_{3}:=\frac{s_2}{2}, \, \textrm{ and } \,
\varepsilon'_{3}:= \frac{1}{2}\min\{\varepsilon_3, s_3,
\varepsilon'_{2}\},
$$
find a strongly convex function $\varphi_3\in C^{\infty}(U)$ such that $f_{3}-\varepsilon'_{3}\leq \varphi_3\leq f_3$ on $U$, and define
$$
g_3(x)=\left\{
         \begin{array}{ll}
           g_2(x), & \hbox{ if } x\in A_2 \\
           M_{\delta_3}(g_2(x), \varphi_3(x)), & \hbox{ if } x\in U\setminus
           A_2.
         \end{array}
       \right.
$$
As in the preceding claim, it is not difficult to check that $g_3$
is well defined, strongly convex, $C^\infty$, and satisfies
$$
g_3=g_2 \, \textrm{ on } A_2,
$$
$$
|g_3-f|\leq 3\varepsilon_2 \, \textrm{ on } C_2\setminus C_1,
$$
$$
g_{3}=\varphi_2 \, \textrm{ on } U\setminus C_2, \, \textrm{ and }
$$
$$
|g_3-f|\leq 2\varepsilon_3 \, \textrm{ on } C_3\setminus C_2.
$$
By continuing the inductive process in this manner one can construct a sequence of strongly convex functions $g_n\in C^{\infty}(U)$ such that
$$
g_{n+1}=g_n \, \textrm{ on } A_n,
$$
$$
|g_{n+1}-f|\leq 3\varepsilon_n \, \textrm{ on } C_n\setminus C_{n-1},
$$
$$
|g_{n+1}-f|\leq 2\varepsilon_{n+1} \, \textrm{ on } C_{n+1}\setminus C_n,
$$
and with $|g_1-f|\leq 2\varepsilon_1$ on $C_1$. This clearly implies that the function $g:U\to\R$ defined by
$$
g(x)=\lim_{n\to\infty}g_n(x)
$$
is $C^\infty$, strongly convex, and satisfies $|g(x)-f(x)|\leq\varepsilon(x)$ for all $x\in U$. Finally, in order to obtain a real analytic function $g$ with the same properties, one can apply Whitney's theorem on $C^2$-fine approximation of $C^2$ functions by real analytic functions, as in the last step of the proof of Theorem \ref{uniform approximation of convex by real analytic convex}.
$\Box$

\section{$C^{1}$-fine approximation of properly convex functions}

In order to prove Theorem \ref{a sufficent condition for C1 fine approximation property for convex functions on Rd} we will have to modify the proof of Theorem \ref{a sufficent condition for fine approximation property for convex functions on Rd} by carrying estimates on the derivatives, and take into account the following observation.
\begin{lemma}
Let $M_\varepsilon$ the smooth maximum of Lemma \ref{smooth maxima}, and let $V\subseteq\R^d$ be an open set.
If $\varphi, \psi\in C^{1}(V)$, then
$$
\| DM_{\varepsilon}(\varphi, \psi)-\frac{D\varphi +D\psi}{2}\|\leq\frac{1}{2}\|D\varphi-D\psi\|.
$$
\end{lemma}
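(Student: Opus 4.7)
The plan is to exploit the explicit formula for $M_{\varepsilon}$ established in the proof of Lemma \ref{smooth maxima}, namely
$$
M_{\varepsilon}(x,y)=\frac{x+y+\theta(x-y)}{2},
$$
where $\theta\in C^{\infty}(\R)$ is convex, symmetric, and $1$-Lipschitz (so in particular $|\theta'(t)|\le 1$ for all $t\in\R$). Everything reduces to a one-line chain-rule computation.

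First I would fix a point $x\in V$ and compute the derivative of the composition $x\mapsto M_{\varepsilon}(\varphi(x),\psi(x))$ directly via the chain rule:
$$
DM_{\varepsilon}(\varphi,\psi)(x)=\frac{D\varphi(x)+D\psi(x)}{2}+\frac{\theta'\!\left(\varphi(x)-\psi(x)\right)}{2}\bigl(D\varphi(x)-D\psi(x)\bigr).
$$
Next I would simply rearrange this identity to isolate the quantity of interest:
$$
DM_{\varepsilon}(\varphi,\psi)(x)-\frac{D\varphi(x)+D\psi(x)}{2}=\frac{\theta'\!\left(\varphi(x)-\psi(x)\right)}{2}\bigl(D\varphi(x)-D\psi(x)\bigr).
$$

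Finally, taking norms and invoking the bound $|\theta'(t)|\leq 1$ (which was already noted in the proof of Lemma \ref{smooth maxima} in connection with properties (v)--(viii)) yields
$$
\left\| DM_{\varepsilon}(\varphi,\psi)(x)-\frac{D\varphi(x)+D\psi(x)}{2}\right\|\leq\frac{1}{2}\|D\varphi(x)-D\psi(x)\|,
$$
which is exactly the desired estimate. There is no genuine obstacle here: the whole content of the lemma is that $M_{\varepsilon}$ is a convex combination of $x$ and $y$ whose mixing coefficient $\frac{1}{2}(1\pm\theta'(x-y))$ lies in $[0,1]$, so the gradient of $M_{\varepsilon}(\varphi,\psi)$ is a convex combination of $D\varphi$ and $D\psi$ and therefore stays within $\tfrac12\|D\varphi-D\psi\|$ of their midpoint.
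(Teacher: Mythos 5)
Your proof is correct and is essentially the paper's argument: the paper performs the same chain-rule computation with the formula $M_{\varepsilon}(x,y)=\frac{x+y+\theta(x-y)}{2}$ and the bound $|\theta'|\leq 1$, merely phrased through directional derivatives (one-dimensional case first, then restriction to lines $t\mapsto x+tv$), whereas you write the full gradient identity directly. The content is identical, so there is nothing to add.
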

\begin{proof}
Consider first the one-dimensional case when $\varphi, \psi: V\subseteq\R\to\R$. We have
$$
\frac{d}{dt}M_{\varepsilon}\left(\varphi(t), \psi(t)\right)=\frac{\varphi'(t)+\psi'(t)}{2}+\frac{1}{2}\theta_{\varepsilon}'\left(\varphi(t)-\psi(t)
\right) \left(\varphi'(t)-\psi'(t)\right),
$$
and $|\theta'_{\varepsilon}(s)|\leq 1$ for all $s$ because $\theta_{\varepsilon}$ is $1$-Lipschitz. Therefore
$$
\left|\frac{d}{dt}M_{\varepsilon}\left(\varphi(t), \psi(t)\right)-\frac{\varphi'(t)+\psi'(t)}{2}
\right| \leq \frac{1}{2}|\varphi'(t)-\psi'(t)|.
$$
The general case follows at once by considering, for every $x\in V$, $v\in\R^d$ with $\|v\|=1$, the functions $t\mapsto \varphi(x+tv)$ and $t\mapsto\psi(x+tv)$.
\end{proof}

Let us now explain the changes one has to make in the proof of Theorem \ref{a sufficent condition for fine approximation property for convex functions on Rd} in order to obtain Theorem \ref{a sufficent condition for C1 fine approximation property for convex functions on Rd}.
In this case we do not need to redefine the function $f$ outside $B_{n+2}$ (because we are not going to rely on the proof of Theorem \ref{uniform approximation of convex by real analytic convex}),
so we simply put $f_n=f$ and $\widetilde{f}_n=(1-r_n)(f_n-\beta_n)+\beta_n$. Notice that now we have $f_n, \widetilde{f}_n\in C^{1}(U)$ for every $n\in\N$.

We use the same preliminaries and Claims 4--6 (with obvious changes) as in the proof of Theorem \ref{a sufficent condition for fine approximation property for convex functions on Rd}, but in Claim 5 we add
$$
\|D\widetilde{f}_n-Df_n\|=\|D\widetilde{f}_n-Df\|\leq\varepsilon_n \, \textrm{ on } B_{n+2},
$$
which clearly holds provided $r_n>0$ is small enough.
Now we proceed with the inductive construction.
Consider the function $\widetilde{f}_1$. Notice that $\widetilde{f_1}$ is $C^1$ on $B_{3}\supset\overline{B_2}$. By using the convolutions
$(f_1-\varepsilon'_{1}/2)*\delta_t$, where
$\delta_{t}=t^{-d}\delta(x/t)$,
$\delta\geq 0$ being a $C^\infty$ function with bounded support and
$\int_{\R^d}\delta=1$, and taking $t>0$ sufficiently small, we can find a convex function
$\varphi_1\in C^{\infty}(U)$ of the form $\varphi_1=(\widetilde{f}_1 -\varepsilon'_{1}/2)*\delta_t$ such that
$\widetilde{f}_1-\varepsilon'_1\leq\varphi_1\leq \widetilde{f}_1$ and
$\|D\varphi_1-D\widetilde{f}_1\|\leq\varepsilon'_1$ on $\overline{B_{2}}$,
where $\varepsilon'_1:=\frac{1}{2}\min\{\varepsilon_1, s_1\}$. Set
$g_1=\varphi_1$.
\begin{claim}\label{first estimation}
We have $|g_1-f|\leq 2\varepsilon_1$ and $\|Dg_1-Df\|\leq 2\varepsilon_1$ on $C_1$.
\end{claim}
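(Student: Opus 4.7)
My plan is a direct triangle inequality through $\widetilde{f}_1$, once one observes that $C_1$ is contained in every region on which the needed estimates have been established. First I would unpack the three ingredients already in hand: (i) the mollification bounds $\widetilde{f}_1-\varepsilon'_1\leq\varphi_1\leq \widetilde{f}_1$ and $\|D\varphi_1-D\widetilde{f}_1\|\leq\varepsilon'_1$ on $\overline{B_2}$ (which the construction delivers because $\widetilde{f}_1$ is $C^1$ on the compact set $\overline{B_2}\subset B_3$, so the mollification parameter $t>0$ can be chosen small enough for $C^1$-closeness); (ii) the pointwise bound $|\widetilde{f}_1-f_1|\leq\varepsilon_1$ on $C_1$ from Claim 6; and (iii) the derivative bound $\|D\widetilde{f}_1-Df_1\|\leq\varepsilon_1$ on $B_{3}$ added to Claim 5. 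Since $f_n\equiv f$ throughout this argument and $C_1\subset \overline{C_1}\subset A_2\subset B_2\subset B_{3}$ by Claim 6, all three estimates are valid on $C_1$.

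Next I will combine them by the triangle inequality, using the definition $\varepsilon'_1=\tfrac12\min\{\varepsilon_1,s_1\}\leq \tfrac12\varepsilon_1$. For every $x\in C_1$,
$$
|g_1(x)-f(x)|=|\varphi_1(x)-f(x)|\leq|\varphi_1(x)-\widetilde{f}_1(x)|+|\widetilde{f}_1(x)-f(x)|\leq \varepsilon'_1+\varepsilon_1\leq \tfrac32\varepsilon_1\leq 2\varepsilon_1,
$$
and
$$
\|Dg_1(x)-Df(x)\|\leq\|D\varphi_1(x)-D\widetilde{f}_1(x)\|+\|D\widetilde{f}_1(x)-Df(x)\|\leq \varepsilon'_1+\varepsilon_1\leq 2\varepsilon_1,
$$
which is exactly the claim.

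There is really no obstacle at this particular step; it will amount to a bookkeeping verification that the constants chosen in the set-up of $\varphi_1$ are tight enough. The genuinely delicate points were settled earlier, by strengthening Claim 5 with a uniform derivative bound on $B_{n+2}$ (which is why $r_n>0$ has to be selected sufficiently small) and by choosing the mollification radius $t$ so that the convolution achieves $C^1$-closeness, not just $C^0$-closeness, on the compact set $\overline{B_2}$. Once those two parameter choices have been made, the estimates above are immediate consequences of the triangle inequality.
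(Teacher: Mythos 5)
Your proof is correct and is essentially the paper's own argument: the paper also proves the claim by the triangle inequality through $\widetilde{f}_1$, using $\|D\varphi_1-D\widetilde{f}_1\|\leq\varepsilon'_1$ on $\overline{B_2}\supset C_1$ together with the strengthened Claim 5 bound $\|D\widetilde{f}_1-Df\|\leq\varepsilon_1$, the pointwise estimate being handled exactly as in the $C^0$ case. Your bookkeeping (including $C_1\subset B_2$ and $\varepsilon'_1\leq\tfrac12\varepsilon_1$) matches the paper's.
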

\begin{proof}
We only have to check the second inequality. On $C_1\subset B_2$ we have
$$
\|Dg_1-Df\|\leq\|D\varphi_1-D\widetilde{f}_1\|+\|D\widetilde{f}_1-Df\|\leq\varepsilon'_1+\varepsilon_1\leq 2\varepsilon_1.
$$
\end{proof}

Now consider $\widetilde{f}_2$, and set
$$
\delta_{2}:=\frac{s_1}{2}, \, \textrm{ and } \,
\varepsilon'_{2}:=\frac{1}{2}\min\{\varepsilon_2, s_2,
\varepsilon'_{1}\}.
$$
As before, we can find a convex function $\varphi_2\in
C^{\infty}(U)$ such that $\widetilde{f}_{2}-\varepsilon'_{2}\leq
\varphi_2\leq \widetilde{f}_{2}$ and $\|D\varphi_2-D\widetilde{f}_2\|\leq\varepsilon'_2$ on $\overline{B_3}$.
Define $g_2:U\to\R$ by
$$
g_2(x)=\left\{
         \begin{array}{ll}
           g_1(x), & \hbox{ if } x\in A_1 \\
           M_{\delta_2}(g_1(x), \varphi_2(x)), & \hbox{ if } x\in U\setminus A_1.
         \end{array}
       \right.
$$
\begin{claim}
The function $g_2$ is well defined, convex, $C^\infty$, and satisfies
$$
g_2=g_1 \, \textrm{ on } A_1,
$$
$$
|g_2-f|\leq 3\varepsilon_1 \, \textrm{ and } \, \|Dg_2-Df\|\leq 5\varepsilon_1 \, \textrm{ on } C_1,
$$
$$
g_{2}=\varphi_2 \, \textrm{ on } U\setminus C_1, \, \textrm{ and }
$$
$$
|g_2-f|\leq 2\varepsilon_2 \, \textrm{ and } \, \|Dg_2-Df\|\leq 2\varepsilon_2 \,  \textrm{ on } C_2\setminus C_1.
$$
\end{claim}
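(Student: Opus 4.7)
My approach mirrors the proof of the analogous statement in the $C^0$ case within Theorem \ref{a sufficent condition for fine approximation property for convex functions on Rd}, with added tracking of first derivatives using the preceding Lemma on $DM_{\varepsilon}$. I would organize the argument in three stages: (a) well-definedness, smoothness, convexity, and the two identities $g_2=g_1$ on $A_1$ and $g_2=\varphi_2$ on $U\setminus C_1$; (b) the bounds $|g_2-f|\leq 3\varepsilon_1$ and $\|Dg_2-Df\|\leq 5\varepsilon_1$ on $C_1$; and (c) the analogous bounds on $C_2\setminus C_1$.

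For (a), by Proposition \ref{properties of M(f,g)} it suffices to show that the two formulas defining $g_2$ agree on a neighborhood of $\partial A_1$. On $\mathcal{N}_1$, the inequality $\widetilde{f}_1\geq\widetilde{f}_2+s_1$ (from the claim giving the $s_n$) combined with $g_1=\varphi_1\geq\widetilde{f}_1-\varepsilon'_1$, $\varphi_2\leq\widetilde{f}_2$, and $\varepsilon'_1\leq s_1/2=\delta_2$ yields $g_1\geq\varphi_2+\delta_2$, so Lemma \ref{smooth maxima}(iii) gives $M_{\delta_2}(g_1,\varphi_2)=g_1$ there. The identity on $U\setminus C_1$ follows symmetrically, from $\widetilde{f}_2\geq\widetilde{f}_1+s_1$ on $U\setminus C_1$ combined with $\varepsilon'_2\leq\varepsilon'_1/2\leq s_1/4\leq\delta_2$. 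For stage (b), on $C_1\subset B_2$ the defining inequalities and the preceding claims give $|g_1-f|\leq 2\varepsilon_1$ and $|\varphi_2-f|\leq\varepsilon'_2+\varepsilon_2\leq 2\varepsilon_2\leq 2\varepsilon_1$, so the sandwich $\max\{g_1,\varphi_2\}\leq g_2\leq\max\{g_1,\varphi_2\}+\delta_2/2$ from Lemma \ref{smooth maxima}(ii) yields $|g_2-f|\leq 3\varepsilon_1$, provided we exploit the freedom in the compactness claim to arrange $s_1\leq\varepsilon_1$ (and hence $\delta_2\leq\varepsilon_1/2$). For the derivative estimate, the preceding Lemma gives $\|Dg_2-\tfrac12(Dg_1+D\varphi_2)\|\leq\tfrac12\|Dg_1-D\varphi_2\|$ on $U\setminus A_1$, which by the triangle inequality upgrades to $\|Dg_2-Df\|\leq\|Dg_1-Df\|+\|D\varphi_2-Df\|$. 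On $C_1$, Claim \ref{first estimation} gives the first summand $\leq 2\varepsilon_1$, while the added derivative bound $\|D\widetilde{f}_2-Df\|\leq\varepsilon_2$ on $B_4$ combined with $\|D\varphi_2-D\widetilde{f}_2\|\leq\varepsilon'_2$ on $\overline{B_3}$ yields the second summand $\leq\varepsilon'_2+\varepsilon_2\leq 2\varepsilon_1$, so $\|Dg_2-Df\|\leq 4\varepsilon_1\leq 5\varepsilon_1$ on $C_1\setminus A_1$; on $A_1$ we have $g_2=g_1$ and the bound follows from Claim \ref{first estimation} directly.

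Stage (c) is immediate, since $g_2=\varphi_2$ on $U\setminus C_1\supset C_2\setminus C_1$: both estimates reduce to $|\varphi_2-f|\leq\varepsilon'_2+\varepsilon_2\leq 2\varepsilon_2$ and $\|D\varphi_2-Df\|\leq\varepsilon'_2+\varepsilon_2\leq 2\varepsilon_2$ on $C_2\subset A_3\subset B_3$, which follow from the construction of $\varphi_2$ and the added derivative estimate for $\widetilde{f}_2$. The only conceptually new ingredient compared with the $C^0$ argument is the derivative bound through the smooth max, which is precisely the content of the preceding Lemma; the main obstacle is therefore not conceptual but consists of carefully checking that each of the inequalities used for $g_1,\varphi_2,\widetilde{f}_1,\widetilde{f}_2$ holds on a sufficiently large region (in particular on $\overline{B_3}$ for the $C^1$ estimate on $\varphi_2-\widetilde{f}_2$ and on $B_4$ for $\|D\widetilde{f}_2-Df\|$), and that we are free to assume $s_n\leq\varepsilon_n$ when invoking the compactness claim.
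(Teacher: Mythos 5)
Your proposal is correct and follows essentially the same route as the paper: the structural facts (well-definedness, the identities on $A_1$ and $U\setminus C_1$, and the $C^0$ bounds) are obtained exactly as in the $C^0$ claim of the previous section, and the new derivative bounds come from the lemma on $DM_{\varepsilon}$ combined with the estimates $\|D\varphi_i-D\widetilde{f}_i\|\leq\varepsilon'_i$ and $\|D\widetilde{f}_i-Df\|\leq\varepsilon_i$ on the relevant $\overline{B_{i+1}}$. Your bookkeeping is even slightly sharper (you get $4\varepsilon_1\leq 5\varepsilon_1$ on $C_1\setminus A_1$), and the condition $s_1\leq\varepsilon_1$ you propose to arrange is in fact automatic, since on $A_1$ Claims 5 and 6 give $f+s_1\leq\widetilde{f}_1< f+\varepsilon_1$.
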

\begin{proof}
This time we only have to check the inequalities involving the derivatives. On $A_1$ we have $Dg_2=Dg_1$, so we have what we need by the preceding claim. On $C_{1}\setminus A_1$ we have
\begin{eqnarray*}
& & \|D g_{1}-D\varphi_2\|\leq \|Dg_1-D\widetilde{f}_1\|+\|D\widetilde{f}_1-Df\|+\|Df-D\widetilde{f}_2\|+\|D\widetilde{f}_2 -D\varphi_2\|\leq\\
& & \varepsilon'_1+\varepsilon_1+\varepsilon_2+\varepsilon'_2\leq 3\varepsilon_1,
\end{eqnarray*}
and therefore, using the preceding lemma,
\begin{eqnarray*}
& &
\|Dg_2-Df\|=
\|DM_{\delta_2}(g_1, \varphi_2)-Df\|\leq
\frac{1}{2}\|Dg_1-D\varphi_2\|+\frac{1}{2}\|Dg_1+D\varphi_2-2Df\|\leq \\
& & \frac{3}{2}\varepsilon_1+\frac{1}{2}\|Dg_1-Df\|+
\frac{1}{2}\|D\varphi_2-Df\|\leq \\
& &3\varepsilon_1+\frac{1}{2}\left( \|Dg_1-D\widetilde{f}_1\|+\|D\widetilde{f}_1-Df\|+
\|D\varphi_2-D\widetilde{f}_2\|+\|D\widetilde{f}_2-Df\| \right)\leq\\
& & 3\varepsilon_1 +\frac{1}{2}\left( \varepsilon'_1+\varepsilon_1+\varepsilon'_2+\varepsilon_2\right)\leq 5\varepsilon_1.
\end{eqnarray*}
Finally, on $C_2\setminus C_1$ we have $g_2=\varphi_2$, hence
$$
\|Dg_2-Df\|\leq \|D\varphi_2-D\widetilde{f}_2\|+\|D\widetilde{f}_2-Df\|\leq\varepsilon'_2+\varepsilon_2\leq 2\varepsilon_2.
$$
\end{proof}
Now consider $\widetilde{f}_{3}$ and put
$$
\delta_{3}:=\frac{s_2}{2}, \, \textrm{ and } \,
\varepsilon'_{3}:= \frac{1}{2}\min\{\varepsilon_3, s_3,
\varepsilon'_{2}\},
$$
find a convex function $\varphi_3\in C^{\infty}(U)$ such that $\widetilde{f}_{3}-\varepsilon'_{3}\leq \varphi_3\leq \widetilde{f}_3$ and
 $\|D\varphi_3 -D\widetilde{f}_3\|\leq\varepsilon'_3$ on $\overline{B_4}$, and define
$$
g_3(x)=\left\{
         \begin{array}{ll}
           g_2(x), & \hbox{ if } x\in A_2 \\
           M_{\delta_3}(g_2(x), \varphi_3(x)), & \hbox{ if } x\in U\setminus
           A_2.
         \end{array}
       \right.
$$
Again, it is not difficult to check that $g_3$
is well defined, convex, $C^\infty$, and satisfies
$$
g_3=g_2\, \textrm{ on } A_2,
$$
$$
|g_3-f|\leq 3\varepsilon_2 \, \textrm{ and } \, \|Dg_3-Df\|\leq 5\varepsilon_2 \, \textrm{ on } C_2\setminus C_1,
$$
$$
g_{3}=\varphi_3 \, \textrm{ on } U\setminus C_2, \, \textrm{ and }
$$
$$
|g_3-f|\leq 2\varepsilon_3 \, \textrm{ and } \, \|Dg_3-Df\|\leq 2\varepsilon_3 \,  \textrm{ on } C_3\setminus C_2.
$$
By continuing the inductive process in this manner one can construct a sequence of  convex functions $g_n\in C^{\infty}(U)$ such that
$$
g_{n+1}=g_n \, \textrm{ on } A_n,
$$
$$
|g_{n+1}-f|\leq 3\varepsilon_n \, \textrm{ and } \, \|Dg_{n+1}-Df\|\leq 5\varepsilon_n \, \textrm{ on } C_n\setminus C_{n-1},
$$
$$
|g_{n+1}-f|\leq 2\varepsilon_{n+1} \, \textrm{ and } \, \|Dg_{n+1}-Df\|\leq 2\varepsilon_{n+1} \,  \textrm{ on } C_{n+1}\setminus C_n,
$$
and with $|g_1-f|\leq 2\varepsilon_1\geq \|Dg_1-Df\|$ on $C_1$. This clearly implies that the function $g:U\to\R$ defined by
$$
g(x)=\lim_{n\to\infty}g_n(x)
$$
is $C^\infty$, convex, and satisfies $\max\{|g(x)-f(x)| \, , \, \|Dg(x)-Df(x)\| \}\leq\varepsilon(x)$ for all $x\in U$.
$\Box$

\begin{remark}
{\em The above proofs more generally show the following: if one has the ability to approximate $C^1$ properly convex functions by $C^{\infty}$ strongly convex functions, uniformly on compact sets, and in such a way that the derivatives of the approximations also approximate the derivatives of the given functions, uniformly on compact sets, then one can approximate $C^1$ properly convex functions by real analytic strongly convex functions, in the $C^1$ fine topology. We will investigate the general problem of uniformly approximating (not properly) convex functions and their derivatives in another paper. These proofs can also be easily adapted to get the following: let $M$ be a (noncompact) Riemannian manifold, and let $\mathcal{P}(M)$ be the class of convex functions $f:M\to\R$ such that $f(M)$ is an interval of the form $[a, b)$, with $-\infty<a<b\leq \infty$, and for every $\beta\in [a,b)$ the set $f^{-1}[a, \beta]$ is compact. If on $M$ one has the ability to approximate every function of $\mathcal{P}(M)$ by $C^p$ convex functions, uniformly on compact sets of $M$, then every function of $\mathcal{P}(M)$ can be approximated by $C^p$ convex functions in the $C^0$-fine topology. A similar statement holds for $C^1$-fine approximation.
By combining  this observation with \cite[Corollary 4.4]{AF2} we also deduce the following: if $M$ is a complete finite-dimensional Riemannian manifold with sectional curvature $K\leq 0$, then every function in $\mathcal{P}(M)$ can be approximated by $C^1$ convex functions in the $C^0$-fine topology. The condition that $f$ belong to $\mathcal{P}(M)$ cannot be removed in general, as we already know by considering the case when $M=\R^n$, or when $M$ is one of the manifolds constructed in \cite{Smith}.}
\end{remark}

\section{Proofs of Theorems \ref{C0 fine approximation of convex functions on R} and \ref{characterization of the fine
approximation property for convex functions on Rd}, and of Corollaries  \ref{prescribing continuous boundary values
and subdifferentials of real analytic convex functions} and \ref{prescribing differentials of smooth convex functions}.}

\medskip

\begin{center}
{\em Proof of Theorem \ref{C0 fine approximation of convex functions on R}.}
\end{center}

\noindent Let us first assume that $U=(a,b)$ with $-\infty<a<b<\infty$. If $f(a^{+}):=\lim_{t\to a^{+}}f(t)=\infty=\lim_{t\to b^{-}}f(t):=f(b^{-})$ then $f$ is proper, so by Theorem \ref{a sufficent condition for fine
approximation property for convex functions on Rd} we have what we need. If  these limits are both finite then we can write $f=c+\ell$, where $\ell(x)=\frac{f(b^{-})-f(a^{+})}{b-a}x$ is linear, and $c(a^{+})=c(b^{-})$, so either $c$ is constant, in which case we are done, or else $c$ is proper, and again we conclude by a direct application of Theorem \ref{a sufficent condition for fine approximation property for convex functions on Rd}.

Thus the only interesting case is when one of these limits is finite and the other one is infinite. Let us assume, for instance, that $\lim_{t\to a^{+}}f(t)<\infty=\lim_{t\to b^{-}}f(t)$. There exist $c, d\in (a,b)$ with $c<d$ and $f'(d)>f'(c)$. Define functions $f_{1}:(a,b)\to\R$ by
$$
f_1(x)=\left\{
  \begin{array}{ll}
    f(x), & \hbox{ if } a<x\leq d \\
    f(d)+f'(d)(x-d), & \hbox{ if } d\leq x<b,
  \end{array}
\right.
$$
and $f_2:(-\infty, b)\to\R$
$$
f_2(x)=\left\{
  \begin{array}{ll}
    f(d)+f'(d)(x-d), & \hbox{ if } x\leq d \\
    f(x), & \hbox{ if } d\leq x<b.
  \end{array}
\right.
$$
Notice that $f=\max\{f_1, f_2\}$ on $(a,b)$, and that $f_1$ and $f_2$ are properly convex on $(a,b)$ and $(-\infty, b)$, respectively. Moreover, there exist $\delta>0$ and $x_1, x_2\in (a,b)$ such that $x_1<x_2$, $f_1(x)\geq f_2(x)+\delta$ for all $x\in (a, x_1]$, and $f_2(x)>f_1(x)+\delta$ for all $x\in [x_2, b)$. Let $\varepsilon:(a,b)\to (0, \infty)$ be a continuous function. Put
$$
\varepsilon'=\frac{1}{2}\min\{ \delta, \, \min_{x\in [x_1, x_2]}\varepsilon(x) \, \},
$$
and
$$
\varepsilon_{1}(x)=\frac{1}{2}\min\{\varepsilon', \varepsilon(x)\}, \,\,\,
\varepsilon_{2}(x)=\left\{
                     \begin{array}{ll}
                       \varepsilon'/2, & \hbox{ if } x\in (-\infty, x_1] \\
                       \frac{1}{2}\min\{\varepsilon', \varepsilon(x)\}, & \hbox{ if } x\in [x_1, b).
                     \end{array}
                   \right.
$$
According to the proof of Theorem \ref{a sufficent condition for fine approximation property for convex functions on Rd}, we can find strongly convex functions $g_1, g_2\in C^{\infty}(a,b)$ such that $|f_1(x)-g_1(x)|\leq\varepsilon_1 (x)$ for all $x\in (a,b)$, and
$|f_2(x)-g_2(x)|\leq\varepsilon_2 (x)$ for all $x\in (-\infty,b)$. On $(a,b)$ define $g=M_{\varepsilon'}(g_1, g_2)$, which is a strongly convex $C^{\infty}$ function. We have $g=g_1$ on $(a,x_1]$, $g=g_2$ on $[x_2, b)$, and $|g(x)-f(x)|\leq\varepsilon(x)$ for every $x\in (a,b)$, as is easily checked. We can now conclude as in the last step of the proof of Theorem \ref{uniform approximation of convex by real analytic convex}.
The cases when $a=-\infty$ and (or) $b=+\infty$ can be treated in a similar manner.
$\Box$

\medskip

\begin{center}
{\em Proof of Theorem \ref{characterization of the fine
approximation property for convex functions on Rd}.}
\end{center}

\noindent It is easy to see that $(i)\iff(ii)\implies (iii)$.  We also have
$(i)\implies (iv)$ by Theorem \ref{a sufficent condition for fine
approximation property for convex functions on Rd}, and $(iv)\implies
(v)$ is trivial, so we only have to show $(v)\implies (iii)\implies
(ii)$. To see $(v)\implies (iii)$, suppose that $f=c\circ P+\ell$ and
that $f$ can be $C^0$-finely approximated by $C^{p+1}$ convex
functions. Let $\varepsilon:\R^{d}\to (0, \infty)$ be a continuous
function such that $\lim_{|x|\to\infty}\varepsilon(x)=0$. Find a
convex function $g\in C^{p+1}(\R^d)$ such that
$|f-g|\leq\varepsilon$. Then we will see that $f=g$, which
contradicts the assumption that $f\notin C^{p+1}(\R^d)$.

Suppose first that there exists $x\in \R^{d}$ such that $g(x)>f(x)$, and
take $v\in \textrm{Ker} P$, $v\neq 0$. Consider the convex function
$h(t)=g(x+tv)-t\ell (v)-f(x)=g(x+tv)-f(x+tv)$, which is defined on
$(-\infty, \infty)$. We have
$\lim_{|t|\to\infty}|f(x+tv)-g(x+tv)|=0$, hence also
$\lim_{|t|\to\infty}h(t)=0$. But $h(0)=g(x)-f(x)>0$, and this
contradicts the fact that $h$ is convex.

Therefore we must have $f-\varepsilon\leq g\leq f$ on $\R^{d}$. Now
assume that there exists $x\in \R^{d}$ such that $g(x)<f(x)$. For the
same function $h$ we now have
$h(0)=g(x)-f(x)<0=\lim_{|t|\to\infty}h(t)=0$. By the mean value
theorem there exists $t_0>0$ such that $h'(t_0)>0$, and by convexity
$h(t)\geq h(0)+h'(t_0)t$ for all $t>0$, which implies
$\lim_{t\to\infty}h(t)=\infty$, a contradiction. Therefore $f=g$ on
$\R^{d}$.

Finally, let us check $(iii)\implies (ii)$. By Lemma \ref{reduction to Rk with k less than n} there
exists a $(d+1)$-dimensional corner function $C$ which supports $f$
at $0$. And (for every $(d+1)$-dimensional corner function $C$ on
$\R^d$) it is easy to see that there exists a linear functional
$\ell:\R^{d}\to\R$ such that $C(x)-\ell(x)$ tends to $\infty$ as
$|x|\to\infty$. If we set $c=f-\ell$, we have $f(x)=c(x)+\ell(x)$,
with $c(x)=f(x)-\ell(x)\geq C(x)-\ell(x)\to\infty$ as
$|x|\to\infty$. $\Box$

\medskip

\begin{center}
{\em Proof of Corollary \ref{prescribing continuous boundary values
and subdifferentials of real analytic convex functions}.}
\end{center}

\noindent We may assume $\ell=0$.
Denote $V=\rm{int}(K)$. Take a $C^1$ function $\eta:\R^n\to
[0, \infty)$ such that $\eta^{-1}(0)=\R^n\setminus V$ and $\eta\leq
\varepsilon$ on $V$, use Theorem \ref{a sufficent condition for fine
approximation property for convex functions on Rd} to find a real
analytic strongly convex function $g:V\to\R$ such that
$|f-g|\leq\eta$ on $V$, and define $F:U\to\R$ by $F=f$ on
$U\setminus V$ and $F=g$ on $V$. Let us show that $F$ is convex near
$\partial V$. Take $x\in\partial V$ and $v\in\R^n$. We have to see
that $t\mapsto F(x+tv)$ is convex when $|t|$ is small. If $v$ is
tangent to $\partial V$, since $V$ is convex and $F=f$ on
$U\setminus V$, we have $F(x+tv)=f(x+tv)$, so this is obvious. If
$v$ is transversal to $\partial V$ at $x$, we can assume for
instance that there exists $\delta>0$ so that $x+tv\in V$ and
$x-tv\in U\setminus V$ for $t\in (0, \delta)$. Define
$\varphi_1(t)=f(x+tv)$ for $t\in (-\delta, \delta)$,
$\varphi_2(t)=g(x+tv)$ for $t\in [0, \delta)$, and
$\varphi:(-\delta, \delta)\to\R$ by $\varphi(t)=\varphi_1(t)$ if
$t<0$ and $\varphi(t)=\varphi_2(t)$ if $t>0$. We have to see that
$\varphi$ is convex, which amounts to checking that
$\varphi_{1}'(0^{-})\leq\varphi_{2}'(0^{+})$. And indeed, recalling
that $\eta=0$ on $U\setminus V$ and
$\frac{d}{dt}\eta(x+tv)_{|_{t=0}}=0$, and using convexity of
$\varphi_1$ on $(-\delta, \delta)$, we have
\begin{eqnarray*}
& &\lim_{t\to 0^{-}}\frac{\varphi_{1}(t)-\varphi_{1}(0)}{t}\leq\lim_{t\to 0^{+}}\frac{\varphi_{1}(t)-\varphi_{1}(0)}{t}\\
& &\leq \lim_{t\to 0^{+}}\frac{\varphi_{2}(t)-\varphi_{2}(0)+\eta(x+tv)}{t}=\lim_{t\to 0^{+}}\frac{\varphi_{2}(t)-\varphi_{2}(0)}{t}.
\end{eqnarray*}
To see that $\partial f(x)=\partial F(x)$, take $\zeta\in \partial
f(x)$ and assume that $\zeta\notin\partial F(x)$, then there is
$v\neq 0$ such that the line $t\mapsto F(x)+t\zeta(v)$ does not
support $F(x+tv)$ at $t=0$. As before we may assume that $v$ is
transversal to $\partial V$ at $x$ and also, up to replacing $v$
with $-v$, that $x+tv\in V$ and $x-tv\in U\setminus V$ for $t\in (0,
\delta)$. Let $\varphi_1, \varphi_2$ be defined as above. We have,
for small $s_1, s_2>0$,
\begin{eqnarray*}
\frac{F(x-s_1 v)-F(x)}{-s_1}\leq\zeta(v)\leq\lim_{t\to 0^{-}}\frac{\varphi_{1}(t)-\varphi_{1}(0)}{t}\leq
\lim_{t\to 0^{+}}\frac{\varphi_{2}(t)-\varphi_{2}(0)}{t} \leq \frac{F(x+s_2 v)-F(x)}{s_2},
\end{eqnarray*}
which contradicts the assumption that the line $t\mapsto
F(x)+t\zeta(v)$ does not support $F(x+tv)$ at $t=0$. Similarly one
sees that $\partial F(x)\subseteq \partial f(x)$.  Finally, in the
case when $f\in C^1(U)$, $\partial f(x)$ is a singleton for every
$x\in\partial V$, hence so is $\partial F(x)$, and therefore $F$ is
differentiable at every point of $\partial V$. Since a
differentiable convex function always has a continuous derivative,
it follows that $F\in C^{1}(U)$. $\Box$

\medskip

The proof of Corollary \ref{prescribing differentials of smooth convex functions} is easier, and we leave it to the reader's care.

\section{Appendix: Convex functions vs convex bodies}

In this appendix we recall a (somewhat unbalanced) basic relationship between convex functions and convex bodies, regarding approximation. Given a convex function $f:\R^d\to\R$, if we consider the epigraph $C$ of $f$, which is an unbounded convex body in $\R^{d+1}$, we can approximate $C$ by smooth convex bodies $D_k$ such that $\lim_{k\to\infty} D_k=C$ in the Hausdorff distance. Then it is easy to see (via the implicit function theorem) that the boundaries $\partial D_k$ are graphs of smooth convex functions $g_k:\R^d\to\R$ such that $\lim_{k\to\infty} g_k=f$ uniformly on compact subsets of $\R^d$. But when $f$ is not Lipschitz this convergence is not uniform on $\R^d$, as the following example shows.

\begin{example}
Consider the function $f:\R\to\R$, $f(x)=x^2$. The epigraph $C:=\{(x,y): y\geq x^2\}$ is an unbounded convex body, and the set $D:=\{(x,y) : \textrm{dist} \left( (x,y), C\right)\leq \varepsilon/2\}$ is a $C^1$ convex body such that $C\subset D\subset C+\varepsilon B$, where $B$ is the unit ball of $\R^2$. Hence $D$ approximates $C$ in the Hausdorff distance, and the boundary $\partial D$ is indeed the graph of a $C^1$ convex function $g:\R\to\R$. But the function $g$ does not approximate $f$ on $\R$, because $\lim_{|x|\to\infty} |f(x)-g(x)|=\infty$.
\end{example}

Therefore one cannot employ results on approximation of (unbounded) convex bodies to deduce results on global approximation of convex functions.
By contrast, one can use the well known results on global approximation of Lipschitz convex functions by real analytic convex functions to deduce
the following result (first proved by Minkowski in the case when $C$ is bounded):

\begin{theorem}
Let $C\subset\R^d$ be a (not necessarily bounded) convex body. For every $\varepsilon>0$ there exists a real analytic convex body $D$ such that
$$
C\subset D\subset C+\varepsilon B,
$$
where $B$ is the unit ball of $\R^d$.
\end{theorem}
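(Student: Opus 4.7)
The plan is to realize $C$ as a sublevel set of a $1$-Lipschitz convex function on $\R^{d}$ and then replace that function by a real analytic convex approximation, taking a suitable sublevel set of the approximation as our body $D$.

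Concretely, I would start from the distance function $d_{C}(x) := \mathrm{dist}(x, C)$, which is convex and $1$-Lipschitz on $\R^{d}$ and vanishes exactly on $C$, and define $f := d_{C} - \varepsilon/2$. To approximate $f$ I would appeal to the classical global approximation of Lipschitz convex functions by real analytic convex ones: the convolution $f * H_{\rho}$ with the heat kernel is real analytic, $1$-Lipschitz and convex, satisfies $f * H_{\rho} \geq f$ by Jensen's inequality, and converges to $f$ uniformly on $\R^{d}$ as $\rho \to 0^{+}$ (thanks to uniform continuity of $f$). For $\rho$ small enough and an appropriate constant $\delta \leq \varepsilon/4$, the function $g := f * H_{\rho} - \delta$ is then real analytic and convex and satisfies $f - \varepsilon/4 \leq g \leq f$ on all of $\R^{d}$.

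Next I would take $D := \{x \in \R^{d} : g(x) \leq 0\}$, which is automatically closed and convex. Since $g \leq f = -\varepsilon/2$ on $C$, we have $C \subseteq D$, so $D$ inherits nonempty interior from $C$. Conversely, $g(x) \leq 0$ forces $d_{C}(x) = f(x) + \varepsilon/2 \leq g(x) + 3\varepsilon/4 \leq 3\varepsilon/4 < \varepsilon$, so $D \subseteq C + \varepsilon B$.

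The key (and essentially only) subtlety is verifying that $\partial D$ is a real analytic hypersurface, that is, that $0$ is a regular value of $g$. For this I would use the standard fact that a critical point of a $C^{1}$ convex function is necessarily a global minimizer: $\nabla g(x_{0}) = 0$ forces $g(x_{0}) = \inf g$. Since $g$ already attains the value $-\varepsilon/2$ on $C$, we have $\inf g \leq -\varepsilon/2 < 0$, hence $\nabla g \neq 0$ on the level set $\{g = 0\}$. The real analytic implicit function theorem then guarantees that $\partial D = \{g = 0\}$ is a real analytic hypersurface, and $D$ is a real analytic convex body with $C \subseteq D \subseteq C + \varepsilon B$, as required.
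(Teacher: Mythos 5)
Your proposal is correct and follows essentially the same route as the paper: both take the distance function $\mathrm{dist}(\cdot,C)$, regularize it by heat-kernel convolution (with a downward constant shift so that the approximation lies strictly below zero on $C$), define $D$ as the zero sublevel set, check the two inclusions by the same $\varepsilon$-bookkeeping, and get real analyticity of $\partial D$ from the fact that a critical point of a convex function is a global minimizer, so the gradient cannot vanish on the zero level set. The only differences are cosmetic (your constants $\varepsilon/2,\varepsilon/4$ versus the paper's $\varepsilon/3, 2\varepsilon/3$, and your explicit appeal to Jensen's inequality and the analytic implicit function theorem, which the paper leaves implicit).
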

\begin{proof}
Consider the $1$-Lipschitz, convex function $f:\R^n\to [0, \infty)$ defined by $f(x)=\textrm{dist}(x,C)$. Using integral convolution with the heat kernel one can produce a real analytic convex (and $1$-Lipschitz) function $g:\R^n\to\R$ such that $f-2\varepsilon/3\leq g\leq f-\varepsilon/3$ on $\R^n$. Define $D=g^{-1}(-\infty, 0]$. Since $g$ is convex and does not have any minimum on $\partial D=g^{-1}(0)$, we have $\nabla g(x)\neq 0$ for all $x\in \partial D$, hence $\partial D$ is a $1$-codimensional real analytic submanifold of $\R^n$. Because $f\geq g$, we have $C\subset D$. And if $x\notin C+\varepsilon B$ then $f(x)\geq \varepsilon$, hence $g(x)-\varepsilon/3\geq f(x)-\varepsilon\geq 0$, which implies $g(x)>0$, that is $x\notin D$.
\end{proof}

\bigskip

%%%%%%%%%%%%%%%%%%%%%%

\end{document}